\newcommand\Hom[3][]{\mathrm{Hom}_{#1}(#2,#3)}
\newcommand\m[1]{\mathrm{mod}(#1)}
\newcommand\Db[1]{\mathrm{D}^{\mathrm{b}}(#1)}
\newcommand\id[1]{\mathrm{id}_{#1}}
\newcommand\lotimes[2]{\overset{\mbox{\tiny \bf{L}}}{\otimes}_{#1}#2}
\newcommand\per[1]{\mathrm{per}(#1)}
\newcommand\T[1]{\mathrm{Triv}(#1)}
\let\ens\mathbb
\newcommand\Z{\ens{Z}}
\tikzstyle{etiquette}=[minimum height=0.8cm, minimum width=1cm]
\newtheoremstyle{definition}
{\topsep}
{\topsep}
{\normalfont}
{0pt}
{\bfseries\fontfamily{bch}\selectfont}
{.}
{ }
{\thmname{#1}\thmnumber{ #2}\textnormal{\thmnote{ (#3)}}}
\newtheoremstyle{notation}{\topsep}{\topsep}{\normalfont}{0pt}{\bfseries\fontfamily{bch}\selectfont}{.}{ }{\thmname{#1}\thmnumber{ #2}\textnormal{\thmnote{ (#3)}}}
\newtheoremstyle{lemma}{\topsep}{\topsep}{\itshape}{0pt}{\bfseries\fontfamily{bch}\selectfont}{.}{ }{\thmname{#1}\thmnumber{ #2}\textnormal{\thmnote{ (#3)}}}
\newtheoremstyle{proposition}{\topsep}{\topsep}{\itshape}{0pt}{\bfseries\fontfamily{bch}\selectfont}{.}{ }{\thmname{#1}\thmnumber{ #2}\textnormal{\thmnote{ (#3)}}}
\newtheoremstyle{corollary}{\topsep}{\topsep}{\itshape}{0pt}{\bfseries\fontfamily{bch}\selectfont}{.}{ }{\thmname{#1}\thmnumber{ #2}\textnormal{\thmnote{ (#3)}}}
\newtheoremstyle{theorem}{\topsep}{\topsep}{\itshape}{0pt}{\bfseries\fontfamily{bch}\selectfont}{.}{ }{\thmname{#1}\thmnumber{ #2}\textnormal{\thmnote{ (#3)}}}
\newtheoremstyle{example}{\topsep}{\topsep}{\normalfont}{0pt}{\bfseries\fontfamily{bch}\selectfont}{.}{ }{\thmname{#1}\thmnumber{ #2}\textnormal{\thmnote{ (#3)}}}
\newtheoremstyle{remark}{\topsep}{\topsep}{\normalfont}{0pt}{\bfseries\fontfamily{bch}\selectfont}{.}{ }{\thmname{#1}\thmnumber{ #2}\textnormal{\thmnote{ (#3)}}}
\theoremstyle{definition}
\newtheorem{defn}{Definition}[section]
\theoremstyle{notation}
\theoremstyle{lemma}
\newtheorem{lem}[defn]{Lemma}
\theoremstyle{proposition}
\newtheorem{prop}[defn]{Proposition}
\theoremstyle{corollary}
\theoremstyle{theorem}
\newtheorem{thm}[defn]{Theorem}
\newtheorem*{thm*}{Theorem}
\theoremstyle{example}
\newtheorem{ex}[defn]{Example}
\theoremstyle{remark}
\newtheorem{rem}[defn]{Remark}
\titleformat{\section}[block]
{\centering\normalfont\Large\fontfamily{qhv}\selectfont}
{\thesection.}{0.5em}{}
\titleformat{\subsection}[block]
{\flushleft\normalfont\large\itshape\fontfamily{cmss}\selectfont}
{\thesubsection.}{0.5em}{}
\title{\fontfamily{qhv}\selectfont Tilting mutations as generalized Kauer moves for (skew) Brauer graph algebras with multiplicity}
\author{\scshape Valentine Soto}
\date{}
\begin{document}

\maketitle

\medskip

\begin{center}
\begin{minipage}{0.75\textwidth}
\textbf{\fontfamily{bch}\selectfont Abstract.} Generalized Kauer moves are local moves of multiple edges in a Brauer graph that yield derived equivalences between Brauer graph algebras of multiplicity identically 1 \cite{Soto}. Moreover, these derived equivalences are given by a tilting mutation. The goal of this paper is to generalize this result first for Brauer graph algebras with arbitrary multiplicity and second for a generalization of Brauer graph algebras called skew Brauer graph algebras. In these contexts, we prove that the generalized Kauer moves induce derived equivalences via tilting mutations. We also show that skew Brauer graph algebras of multiplicity identically 1 can be seen as the trivial extension of skew gentle algebras.
\end{minipage}
\end{center}

\medskip

\tableofcontents

\medskip

\phantomsection
\section*{Introduction}
\addcontentsline{toc}{section}{Introduction}

\medskip

Tilting theory plays an important role in the representation theory of algebras as it yields derived equivalences \cite{Happel, Rickard}. Given a tilting object, one can construct others under some assumptions via the notion of mutation \cite{BGP,APR,RS,Okuyama}. This operation consists in replacing a direct summand of the given object using an approximation. Unfortunately, tilting mutation does not always exist. This problem has led to the introduction of silting mutations in triangulated categories which generalizes these tilting mutations \cite{AI}. The goal of this paper is to describe the silting mutation for a certain class of algebras called skew Brauer graph algebras, which generalizes the class of Brauer graph algebras.

Brauer graph algebras were first introduced in modular representation theory of finite groups by Donovan and Freislich \cite{DF}. They are finite dimensional algebras that are constructed thanks to the combinatorial data of a Brauer graph. A Brauer graph is a finite graph equipped with a multiplicity function on each vertex and with a cyclic ordering of the edges around each vertex. 
Moreover, Brauer graph algebras are symmetric special biserial algebras \cite{Schroll2}. Hence, the representation theory of Brauer graph algebras is well-known \cite{DF,WW,ES,Green,Roggenkamp}. We refer to the survey \cite{Schroll} for a detailed list of results on Brauer graph algebras. In the recent years, the study of derived equivalences of Brauer graph algebras has drawn some attention. Antipov and Zvonareva proved that this class of algebras is closed under derived equivalences \cite{AZ}. Moreover, Opper and Zvonareva described a complete numerical derived invariant for Brauer graph algebras \cite{OZ} (see also \cite{Zvonareva} for a survey on derived equivalences of Brauer graph algebras).

Silting mutation in Brauer graph algebras can be understood in terms of generalized Kauer moves which are particular moves of edges in the corresponding Brauer graph. These moves were first described by Kauer in the case of one edge \cite{Kauer}. Aihara proved that these moves of one edge give rise to an irreducible tilting mutation in the case of Brauer tree algebras (i.e. Brauer graph algebras whose underlying graph is a tree). These moves were then generalized in \cite{Soto} to describe all silting mutations in Brauer graph algebras of multiplicity identically one. Since Brauer graph algebras are symmetric \cite{Schroll2}, the silting mutation in these algebras are in fact tilting. Hence, these generalized Kauer moves yield derived equivalences of Brauer graph algebras. The first main result of this paper is the generalization of these results for Brauer graph algebras with arbitrary multiplicity.

\medskip

\begin{thm*}[Theorem \ref{thm:generalized Kauer moves with multiplicity}]
    Let $\Gamma=(\Gamma_{0},\Gamma_{1})$ be a Brauer graph and $B$ be its corresponding Brauer graph algebra. We assume that the least common multiple $\overline{m}$ of the multiplicities in $\Gamma$ is invertible in the field $k$. Then, for any subset of edges $E$ in $\Gamma$, the following holds.

    \begin{enumerate}[label=(\arabic*)]
    \item The Brauer graph algebra $B'$ obtained by a generalized Kauer move of $E$ in $\Gamma$ is derived equivalent to $B$.
    \item The silting mutation of $B$ over the projective $B$-module corresponding to the edges in $\Gamma_{1}\backslash E$ is tilting and its endomorphism algebra is isomorphic to $B'$.
    \end{enumerate}
\end{thm*}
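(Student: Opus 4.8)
The plan is to reduce the statement to the case $\overline m=1$, which is the result of \cite{Soto} quoted above, by means of a Galois covering. Two preliminary reductions show that the only real content is the computation of one endomorphism algebra. First, the claim that $T$ is tilting in~(2) is automatic: a Brauer graph algebra is symmetric, so the Nakayama functor is isomorphic to the identity on $\Db{B}$, whence $\Hom[\Db{B}]{T}{T[i]}\cong D\,\Hom[\Db{B}]{T}{T[-i]}$ for every $i\in\Z$, where $D$ denotes the $k$-dual; thus a silting complex over $B$ --- in particular any silting mutation of $B$ --- has no negative self-extensions and is therefore tilting. Second, (1) is a formal consequence of~(2), since the endomorphism algebra of a tilting complex is derived equivalent to $B$. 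So it suffices to prove that the silting mutation $T$ of $B$ over the projective $P$ attached to $\Gamma_{1}\setminus E$ satisfies $\mathrm{End}_{\Db{B}}(T)\cong B'$.

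For this I would first construct the \emph{unfolding} of $\Gamma$: a Brauer graph $\widetilde\Gamma$ of multiplicity identically $1$, together with an action of a finite abelian group $G$ --- in the simplest cases $\Z/\overline m\Z$ --- all of whose prime divisors lie among those of $\overline m$, so that $|G|$ is invertible in $k$, such that $G$ acts freely on the quiver of $\widetilde B:=B_{\widetilde\Gamma}$ (equivalently, on the edges and on the pairs of consecutive edges of $\widetilde\Gamma$), and $\widetilde B\to B$ is a Galois covering with group $G$, with covering of graphs $\pi\colon\widetilde\Gamma\to\Gamma$ and $\Gamma=\widetilde\Gamma/G$. The local model is the $m$-pointed star covering the single edge whose exceptional vertex has multiplicity $m$; in general one unfolds a vertex $v$ of multiplicity $m_{v}$ into $\overline m/m_{v}$ vertices with stabilizer $\Z/m_{v}\Z$ and each edge into $\overline m$ edges, transporting the cyclic orderings. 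Because $|G|$ is invertible in $k$, the pushdown functor $F_{\lambda}\colon\Db{\widetilde B}\to\Db{B}$ is well behaved: it is exact, commutes with shifts and cones, carries $\mathrm{proj}(\widetilde B)$ onto $\mathrm{proj}(B)$, and satisfies $\Hom[B]{F_{\lambda}X}{F_{\lambda}Y}\cong\bigoplus_{g\in G}\Hom[\widetilde B]{X}{g^{*}Y}$.

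Next I would lift the combinatorics and invoke \cite{Soto}. The generalized Kauer move of $E$ in $\Gamma$ lifts to a $G$-equivariant generalized Kauer move of the $G$-stable set $\widetilde E:=\pi^{-1}(E)$ in $\widetilde\Gamma$, whose output $\widetilde\Gamma'$ is again the unfolding of $\Gamma'$; thus $\widetilde B':=B_{\widetilde\Gamma'}\to B'=B_{\Gamma'}$ is a Galois covering with the same group $G$. By \cite{Soto} applied to $\widetilde\Gamma$ and $\widetilde E$, the silting mutation $\widetilde T$ of $\widetilde B$ over the projective $\widetilde P$ attached to $\widetilde\Gamma_{1}\setminus\widetilde E=\pi^{-1}(\Gamma_{1}\setminus E)$ is tilting, with $\mathrm{End}_{\Db{\widetilde B}}(\widetilde T)\cong\widetilde B'$. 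Since $\widetilde E$ and its complement are $G$-stable and the tilting complex of \cite{Soto} is built summand-by-summand from the local combinatorics along $\widetilde E$, the complex $\widetilde T$ is $G$-invariant: $g^{*}\widetilde T\cong\widetilde T$ for all $g\in G$. Hence $F_{\lambda}\widetilde T$ is a tilting complex over $B$ whose endomorphism algebra, by the covering formula, is a skew group algebra $\widetilde B'\rtimes G$ for a suitable action; as this action is free, $\widetilde B'\rtimes G$ is Morita equivalent to $\widetilde B'/G=B'$, and a basic tilting complex $T$ with $\mathrm{add}(T)=\mathrm{add}(F_{\lambda}\widetilde T)$ then satisfies $\mathrm{End}_{\Db{B}}(T)\cong B'$.

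Finally, $T$ must be identified with the silting mutation of $B$ over $P$: since $F_{\lambda}$ is exact, carries $\mathrm{add}(\widetilde P)$ onto $\mathrm{add}(P)$, and preserves radical maps, hence minimal approximations, and since $\widetilde P$ is $G$-stable, the covering formula shows that $F_{\lambda}$ takes a minimal left $\mathrm{add}(\widetilde P)$-approximation of the complementary summand to a minimal left $\mathrm{add}(P)$-approximation of its image; therefore $F_{\lambda}$ sends the defining triangle of $\widetilde T$ to that of the silting mutation of $B$ over $P$, and $T$ is that mutation. This proves~(2), and~(1) follows. I expect the main obstacles to be the construction of the covering $\widetilde\Gamma\to\Gamma$ for arbitrary multiplicity and the verification that generalized Kauer moves lift along it $G$-equivariantly, together with the compatibility of silting mutation with the pushdown $F_{\lambda}$; it is precisely in that compatibility, in the covering $\mathrm{Hom}$-formula, and in the Morita equivalence $\widetilde B'\rtimes G\sim\widetilde B'/G$ that the hypothesis that $\overline m$ is invertible in $k$ is used in an essential way.
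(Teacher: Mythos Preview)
Your strategy coincides with the paper's: cover $\Gamma$ by a multiplicity-one Brauer graph $\widetilde\Gamma$ with a free $G=\Z/\overline m\,\Z$-action, lift the Kauer move equivariantly, apply \cite{Soto} upstairs, and descend. The difference is packaging: you phrase the descent via the pushdown functor $F_\lambda$ and the covering formula for morphism spaces, whereas the paper uses the skew group algebra $\widetilde B\,G$ and the Morita equivalence $B\simeq f(\widetilde B\,G)f$ (Proposition \ref{prop:Galois covering and skew group algebra}); these are dual viewpoints. The two steps you flag as obstacles are exactly what the paper isolates and proves: the equivariant lift of the Kauer move is Proposition \ref{prop:commutativity Galois covering and generalized Kauer move with multiplicity} --- which requires defining how the move acts on the \emph{grading} $d$, not just on the graph --- and the compatibility of silting mutation with $-\lotimes{\Lambda}{\Lambda G}$ is Proposition \ref{prop:silting mutation in skew group algebras}, whose core (Lemma \ref{lem:left minimal morphism in skew group algebra}) is the survival of left minimality that you assert but do not justify. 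One detail you omit: the paper chooses the grading $d$ \emph{adapted to} $H'$, so that the target of each approximation $\alpha_{[h_0]}$ already lies among the $0$-indexed projectives; this is what makes the final identification of $T$ with $\mu^+(B;e_{H''}B)$ go through cleanly. Your opening reduction --- silting is automatically tilting because $B$ is symmetric --- is a nice shortcut the paper does not make explicit.
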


\medskip

The idea of the proof is to construct a $\Z/\overline{m}\, \Z$-covering $\widetilde{\Gamma}$ of $\Gamma$ which is a Brauer graph of multiplicity identically one using \cite{Asashiba}. In this case, one can recover $B$ from the Brauer graph algebra $\widetilde{B}$ associated to $\widetilde{\Gamma}$ thanks to the construction of skew group algebras as defined in \cite{RR} (cf Proposition \ref{prop:Galois covering and skew group algebra}). Moreover, there is a commutativity between constructing this covering and applying the generalized Kauer moves. This can be summarized with the following commutative diagram

\smallskip

\begin{center}
    \begin{tikzcd}[column sep=5cm, row sep=2cm]
        \widetilde{\Gamma} \arrow{r}[above]{\mbox{generalized Kauer}}[below]{\mbox{ move of $\widetilde{H'}$}} \arrow{d}[description]{\mbox{$\Z/\overline{m}\,\Z$-covering}} &\mu^{+}_{\widetilde{H'}}(\widetilde{\Gamma}) \arrow{d}[description]{\mbox{$\Z/\overline{m}\,\Z$-covering}} \\
        \Gamma \arrow{r}[above]{\mbox{generalized Kauer}}[below]{\mbox{move of $H'$}}&\mu^{+}_{H'}(\Gamma)
    \end{tikzcd}
\end{center}

\smallskip

\noindent where $\widetilde{H'}$ is the set of half-edges in $\widetilde{\Gamma}$ consisting of all the lifts of the elements in $H'$.

In a second part of this paper, we generalize the previous result for skew Brauer graph algebras, which generalize the class of Brauer graph algebras. Their construction relies on the combinatorial data of a skew Brauer graph which is a Brauer graph with possible ``degenerate" edges, meaning that the corresponding half-edges are fixed by the pairing. The second main result of this paper describes the combinatorics of silting mutation for skew Brauer graph algebras. 

\medskip

\begin{thm*}[Theorem \ref{thm:generalized Kauer moves for skew Brauer graphs}]
    Let $\Gamma=(\Gamma_{0},\Gamma_{1})$ be a skew Brauer graph and $B$ be its corresponding skew Brauer graph algebra. We assume that the least common multiple $\overline{m}$ of the multiplicities in $\Gamma$ and 2 are invertible in the field $k$. Then, for any subset of edges $E$ in $\Gamma$, the following holds.

    \begin{enumerate}[label=(\arabic*)]
    \item The Brauer graph algebra $B'$ obtained by a generalized Kauer move of $E$ in $\Gamma$ is derived equivalent to $B$.
    \item The silting mutation of $B$ over the projective $B$-module corresponding to the edges in $\Gamma_{1}\backslash E$ is tilting and its endomorphism algebra is isomorphic to $B'$.
    \end{enumerate}
\end{thm*}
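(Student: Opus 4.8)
The plan is to reduce the statement to the case of an ordinary Brauer graph algebra of multiplicity identically one, where the result is already available (Theorem \ref{thm:generalized Kauer moves with multiplicity}, which subsumes the multiplicity-one case of \cite{Soto}), by passing to a Galois covering whose group has order invertible in $k$. First I would treat the multiplicities exactly as in the proof of Theorem \ref{thm:generalized Kauer moves with multiplicity}: using \cite{Asashiba} one constructs a $\Z/\overline{m}\,\Z$-covering of the skew Brauer graph $\Gamma$ which is again a skew Brauer graph, but now of multiplicity identically one, and such that $B$ is recovered from its skew Brauer graph algebra as a skew group algebra in the sense of \cite{RR}; this is the skew-graph analogue of Proposition \ref{prop:Galois covering and skew group algebra}, and one must only check that the covering construction is insensitive to degenerate edges (a degenerate edge lifting to $\overline{m}$ degenerate edges). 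The hypothesis that $\overline{m}$ is invertible ensures that the skew group algebra construction is idempotent-preserving, so that tilting complexes and their endomorphism algebras transfer between the two algebras.

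Second, I would resolve the degenerate edges. By the last result announced in the abstract, a skew Brauer graph algebra of multiplicity identically one is the trivial extension of a skew gentle algebra, and skew gentle algebras become gentle algebras after a skew group algebra construction by $\Z/2\,\Z$; correspondingly the multiplicity-one skew Brauer graph obtained in the first step admits a $\Z/2\,\Z$-covering $\widehat{\Gamma}$ which is an honest Brauer graph of multiplicity identically one, with the associated algebra $\widehat{B}$ satisfying $\widehat{B}\ast(\Z/2\,\Z)$ equal to the multiplicity-one algebra from the first step. Here a degenerate edge, whose two half-edges are identified by the pairing, unfolds into a genuine non-degenerate edge of $\widehat{\Gamma}$ that is swapped by the $\Z/2\,\Z$-action; this is where the assumption that $2$ is invertible in $k$ enters. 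The two steps combine into a single Galois covering $\pi\colon\widehat{\Gamma}\to\Gamma$ with group $G=\Z/\overline{m}\,\Z\times\Z/2\,\Z$, and $B\cong\widehat{B}\ast G$.

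Third, I would establish the key commutativity, extending the diagram of the introduction with $\widehat{\Gamma}$ in the top-left corner: if $\widehat{E}$ denotes the set of all lifts in $\widehat{\Gamma}$ of the edges in $E$ (a $G$-stable family), then performing the generalized Kauer move of $\widehat{E}$ in $\widehat{\Gamma}$ and then forming the skew group algebra yields the algebra obtained by the generalized Kauer move of $E$ in $\Gamma$ — one checks this on cyclic orderings at each vertex, paying attention to the vertices incident to former degenerate edges. Applying Theorem \ref{thm:generalized Kauer moves with multiplicity} to $\widehat{\Gamma}$ gives a tilting complex $\widehat{T}$ over $\widehat{B}$, supported on the edges in $\widehat{\Gamma}_{1}\setminus\widehat{E}$, realizing $\mu^{+}_{\widehat{H'}}(\widehat{\Gamma})$ and with $\mathrm{End}(\widehat{T})\cong\widehat{B'}$. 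Since $\widehat{E}$ is $G$-stable, $\widehat{T}$ is $G$-equivariant, hence descends to a tilting complex $T$ over $B=\widehat{B}\ast G$ with $\mathrm{End}(T)\cong\widehat{B'}\ast G=B'$; because $G$-orbits of edges of $\widehat{\Gamma}$ correspond to edges of $\Gamma$, the complex $T$ is precisely the silting mutation of $B$ over the projective attached to $\Gamma_{1}\setminus E$, and it is tilting because $\widehat{T}$ is. Parts (1) and (2) follow.

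The main obstacle I expect is the second step: making precise, purely at the level of combinatorial data, how a degenerate edge of a skew Brauer graph with multiplicity unfolds under the $\Z/2\,\Z$-covering into a non-degenerate edge, and proving that the generalized Kauer move — whose very definition must first be extended to skew Brauer graphs — commutes with this unfolding, in particular that moving a family $E$ containing degenerate edges corresponds, after covering, to moving a $G$-stable family of ordinary edges with matching cyclic orderings at every vertex. Checking that the skew group algebra construction is compatible with silting/tilting mutation when $|G|$ is invertible is standard but must also be spelled out carefully.
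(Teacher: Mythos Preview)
Your proposal is correct in outline and would work, but it inverts the order of the two coverings relative to the paper. The paper first applies the $\Z/2\Z$-covering to pass from the skew Brauer graph $\Gamma$ (with arbitrary multiplicity) to an ordinary Brauer graph $\Gamma_{d}$ (still with multiplicity), and then invokes Theorem~\ref{thm:generalized Kauer moves with multiplicity} for $\Gamma_{d}$ as a black box; the $\Z/\overline m\,\Z$-covering is hidden inside that earlier theorem. You instead strip the multiplicity first, which forces you to extend the Asashiba covering of \cite{Asashiba} and the analogue of Proposition~\ref{prop:Galois covering and skew group algebra} to skew Brauer graphs---work the paper avoids entirely---before applying the $\Z/2\Z$-unfolding and finally packaging both into a single $\Z/\overline m\,\Z\times\Z/2\Z$-action. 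The paper's ordering is more economical: it reuses the already-established arbitrary-multiplicity result and needs only the $\Z/2\Z$ step (Propositions~\ref{prop:covering and skew group algebra}, \ref{prop:commutativity covering and generalized Kauer move in skew Brauer graphs}, \ref{prop:generalized Kauer move and skew group algebra for skew Brauer graph}) to be new. Your route buys a slightly more unified picture (one group $G$, one covering, one descent) at the cost of having to redo the multiplicity-covering machinery in the skew setting.

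Two places where your sketch is looser than the paper and would need care: first, the identification $B\cong\widehat B\ast G$ is only up to Morita equivalence via an idempotent (the paper's $fB_{d}Gf$), and the final identification of $T$ with the silting mutation $\mu^{+}(B;e_{H''}B)$ requires tracking the left minimal approximations through that idempotent, as the paper does explicitly at the end of the proof of Theorem~\ref{thm:generalized Kauer moves for skew Brauer graphs}; second, ``$\widehat T$ is $G$-equivariant, hence descends'' is exactly the content of Proposition~\ref{prop:silting mutation in skew group algebras}, whose hypotheses ($G$-invariance of the summands and of the target of the minimal approximation) must be verified component by component, and the paper does this separately for edges in $H_{\circ}$ and $H_{\times}$.
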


\medskip

The proof for skew Brauer graph algebras is analogous to the one for Brauer graph algebras with arbitrary multiplicity. We first construct a $\Z/2\Z$-covering $\widetilde{\Gamma}$ of $\Gamma$ which is a standard Brauer graph (with multiplicity). In this case, one can recover $B$ from the Brauer graph algebra $\widetilde{B}$ associated to $\widetilde{\Gamma}$ thanks to the construction of skew group algebras as defined in \cite{RR} (cf Proposition \ref{prop:covering and skew group algebra}). Again, there is a commutativity between constructing this covering and applying the generalized Kauer moves. This can be summarized with the following commutative diagram

\smallskip

\begin{center}
    \begin{tikzcd}[column sep=5cm, row sep=2cm]
        \widetilde{\Gamma} \arrow{r}[above]{\mbox{generalized Kauer}}[below]{\mbox{ move of $\widetilde{H'}$}} \arrow{d}[description]{\mbox{$\Z/2\Z$-covering}} &\mu^{+}_{\widetilde{H'}}(\widetilde{\Gamma}) \arrow{d}[description]{\mbox{$\Z/2\Z$-covering}} \\
        \Gamma \arrow{r}[above]{\mbox{generalized Kauer}}[below]{\mbox{move of $H'$}}&\mu^{+}_{H'}(\Gamma)
    \end{tikzcd}
\end{center}

\smallskip

\noindent where $\widetilde{H'}$ is the set of half-edges in $\widetilde{\Gamma}$ consisting of all the lifts of the elements in $H'$. Using this covering, we also show that skew Brauer graph algebras of multiplicity identically 1 can be understood as trivial extension of skew gentle algebras (cf Theorem \ref{thm:skew Brauer graph algebras of multiplicity as trivial extension}). This generalizes Theorem 3.13 in \cite{Schroll} which interprets Brauer graph algebra of multiplicity identically 1 as trivial extension of gentle algebras.

Note that a notion of skew Brauer graph algebra is also studied in an ongoing project by Elsener, Guazzelli and Valdivieso. Some of their results were presented in a conference in Oslo.

\medskip

\noindent \textbf{\fontfamily{bch}\selectfont Organisation of the paper.} In the first section, we recall the definition and some properties of skew group algebras. In particular, we study the trivial extension, the silting mutation and the morphism spaces of $G$-invariant objects in skew group algebras. These are tools needed in the next two sections.

In the second section, we prove our first main result for Brauer graph algebras with arbitrary multiplicity. For this, we first recall the construction of the covering by a Brauer graph of multiplicity identically 1 in \cite{Asashiba}. Then, we explain how one can recover $B$ from the Brauer graph algebra associated to its covering thanks to the construction of skew group algebras. Finally, we show a commutativity between the covering and the generalized Kauer moves. 

The third section follows the same outline than the second for skew Brauer graph algebras. We also explain how skew Brauer graph algebras of multiplicity identically 1 can be seen as trivial extensions of skew gentle algebras.

\medskip

\noindent \textbf{\fontfamily{bch}\selectfont Acknowledgments.} This paper is part of a PhD thesis supervised by Claire Amiot and supported by the CNRS. The author deeply thanks her advisor for taking the time to read this paper and for the helpful advice and comments on it. Part of this work was done while the author was in Sherbrooke University. The mobility was co funded by the Mitacs Globalink Research Award and the IDEX. She also thanks Thomas Brüstle for interesting discussions and Sherbrooke University for warm welcome.

\medskip

\phantomsection
\section*{Notations and conventions}
\addcontentsline{toc}{section}{Notations and conventions}

\medskip

In this paper, all algebras are supposed to be finite dimensional over an algebraically closed field $k$. For any algebra $\Lambda$, we denote by $\m{\Lambda}$ the category of finitely generated right $\Lambda$-modules, $\Db{\Lambda}$ the bounded derived category associated to $\m{\Lambda}$ and $\per{\Lambda}$ the full subcategory of $\Db{\Lambda}$ consisting of perfect complexes. Moreover, arrows in a quiver will be composed from right to left : for arrows $\alpha$, $\beta$, we write $\beta \alpha$ for the path from the start of $\alpha$ to the target of $\beta$. Similarly, morphisms will be composed from right to left.

\medskip

\section{Skew group algebras}

\medskip

In this section, we collect some properties of skew group algebras. First, we recall the definition of a skew group algebra from \cite{RR}. Throughout this section, $G$ denotes a finite abelian group whose order $|G|$ is invertible in the field $k$. By a $G$-action on an algebra, we mean a left action by automorphisms.

\medskip

\begin{defn} \label{def:skew group algebra}
    Let $\Lambda$ be a $k$-algebra with a $G$-action. The \textit{skew group algebra} $\Lambda G$ associated to $\Lambda$ is the algebra defined by $\Lambda G=\Lambda\otimes_{k}kG$ as a $k$-vector space and whose multiplication is induced by the $G$-action on $\Lambda$ i.e.

    \[(\lambda\otimes g) \ . \ (\mu\otimes h)=\lambda (g\, . \, \mu)\otimes gh\]

    \noindent for all $\lambda,\mu\in\Lambda$, $g,h\in G$ and we extend this relation by linearity and by distributivity.
\end{defn}

\medskip

Denoting by $\widehat{G}$ the dual of $G$, note that $\Lambda G$ has a natural $\widehat{G}$-action which is given by

\[\chi\, .\, (\lambda\otimes g)=\chi(g)\lambda\otimes g\]

\noindent for all $\chi\in\widehat{G}$, $\lambda\in\Lambda$, $g\in G$ and where the relation is extended by linearity.

\medskip

Moreover, given two algebras $\Lambda$ and $\Lambda'$ with a $G$-action and $M$ a $\Lambda$-$\Lambda'$-bimodule, one can define $MG:=M\otimes_{k}kG$. It has a natural structure of $\Lambda G$-$\Lambda'\,G$-bimodule defined by

\[(\lambda\otimes g)\ .\ (m\otimes h)=\lambda (g \,. \, m)\otimes gh \qquad \mbox{and} \qquad (m\otimes h)\ .\ (\lambda' \otimes g')=m (h\, . \, \lambda')\otimes hg'\]

\noindent for all $\lambda\in \Lambda$, $\lambda'\in \Lambda'$, $g,h,g'\in G$ and we extend these relations by linearity and by distributivity. 

\subsection{Trivial extension}

\medskip

We first study the trivial extension of a skew group algebra. Let us recall the definition of the trivial extension of an algebra. We denote by $D=\Hom[k]{-}{k}$ the standard duality on $\m{\Lambda}$.

\medskip

\begin{defn} \label{def:trivial extension}
Let $\Lambda$ be a $k$-algebra. The \textit{trivial extension} $\T{\Lambda}$ of $\Lambda$ by the $\Lambda$-$\Lambda$-bimodule $D\Lambda$ is the algebra defined by $\T{\Lambda}=\Lambda\oplus D\Lambda$ as a $k$-vector space and whose multiplication is induced by the $\Lambda$-$\Lambda$-bimodule structure of $D\Lambda$ i.e.

\[(a,\phi) \ . \ (b,\psi)=(ab, a\psi+\phi b)\]

\noindent for all $a,b\in \Lambda$ and $\phi,\psi\in D\Lambda$.
\end{defn}

\medskip

Let $\Lambda$ be a $k$-algebra with a $G$-action. Then, $G$ acts on $D\Lambda$ as follows

\[g\ . \ \phi:(b\mapsto \phi(g^{-1} \, . \, b))\]

\noindent for all $g\in G$ and $\phi\in D\Lambda$. In particular, $(D\Lambda)G=D\Lambda\otimes_{k}\Lambda G$ has a natural structure of $\Lambda G$-$\Lambda G$-bimodule. 

\medskip

\begin{lem}
    For any $k$-algebra $\Lambda$ with a $G$-action, there is an isomorphism of $\Lambda G$-$\Lambda G$-bimodules 

    \[(D\Lambda)G\simeq D(\Lambda G)\]
\end{lem}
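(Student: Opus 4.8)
The plan is to exhibit an explicit $k$-linear map $(D\Lambda)G\to D(\Lambda G)$ and check that it intertwines both the left and the right actions. Recall that $D\Lambda$ carries its usual $\Lambda$-$\Lambda$-bimodule structure $(a\phi)(x)=\phi(xa)$, $(\phi b)(x)=\phi(bx)$, and that $D(\Lambda G)$ carries the analogous structure over $\Lambda G$. I would define
\[\Phi:(D\Lambda)G\longrightarrow D(\Lambda G),\qquad \Phi(\phi\otimes g)(\mu\otimes h)=\delta_{gh,\,1}\,\phi(g\cdot\mu),\]
extended by linearity, where $\delta_{gh,1}=1$ if $gh$ is the identity of $G$ and $0$ otherwise. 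Up to the $G$-twist $\mu\mapsto g\cdot\mu$, the map $\Phi$ is the composite of the canonical $k$-linear isomorphism $D(\Lambda\otimes_k kG)\simeq D\Lambda\otimes_k D(kG)$ (available since $|G|<\infty$) with the self-duality $D(kG)\simeq kG$, $g\mapsto(g^{-1})^{\ast}$, of the symmetric algebra $kG$; the twist is forced precisely because the multiplication on $\Lambda G$ is not that of the tensor product algebra.

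First I would record that $\Phi$ is bijective. Both $(D\Lambda)G$ and $D(\Lambda G)$ have $k$-dimension $|G|\cdot\dim_k\Lambda$, so it suffices to prove injectivity: if $\Phi\big(\sum_{g\in G}\phi_g\otimes g\big)=0$, then evaluating at $\mu\otimes h$ gives $\phi_{h^{-1}}(h^{-1}\cdot\mu)=0$ for all $\mu\in\Lambda$ and all $h\in G$, and since $\mu\mapsto h^{-1}\cdot\mu$ is a bijection of $\Lambda$ we conclude $\phi_{h^{-1}}=0$ for every $h$. Hence $\Phi$ is a $k$-linear isomorphism, and it remains only to check that it is a morphism of $\Lambda G$-$\Lambda G$-bimodules.

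For the left action I would unravel $\big((\lambda\otimes a)\cdot f\big)(y)=f\big(y\,(\lambda\otimes a)\big)$ for $f\in D(\Lambda G)$ and compare $\Phi\big((\lambda\otimes a)\cdot(\phi\otimes g)\big)$ with $(\lambda\otimes a)\cdot\Phi(\phi\otimes g)$ on a general element $\mu\otimes h$, using the bimodule structure on $(D\Lambda)G$ recalled in the excerpt. Employing the identity $(a\cdot\phi)(x)=\phi(a^{-1}\cdot x)$ and the fact that $G$ acts by algebra automorphisms ($g\cdot(xy)=(g\cdot x)(g\cdot y)$), and using that $G$ is abelian so that $agh=1$ is equivalent to $gha=1$ and forces $gh=a^{-1}$ on the support, both sides reduce after cancelling the Kronecker deltas to $\delta_{agh,1}\,\phi\big((g\cdot\mu)(a^{-1}\cdot\lambda)\big)$. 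The right-action compatibility is entirely analogous, the two sides collapsing to $\delta_{gbh,1}\,\phi\big((g\cdot\lambda')(gb\cdot\mu)\big)$. Together these identifications show $\Phi$ is a bimodule homomorphism, which completes the proof.

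The computation is routine; the only genuine subtlety is pinning down the correct $G$-twist in the definition of $\Phi$. The naive candidate $\phi\otimes g\mapsto\big(\mu\otimes h\mapsto\delta_{gh,1}\phi(\mu)\big)$ is still a $k$-linear isomorphism but fails to commute with the actions, and the left-action computation shows exactly which twist must be inserted; commutativity of $G$ is used in an essential way, so the standing hypothesis that $G$ is a finite abelian group is what makes the argument go through.
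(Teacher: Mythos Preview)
Your proof is correct and uses essentially the same isomorphism as the paper: the paper's map $\psi\otimes g\mapsto\psi_{g}$ with $\psi_{g}(\lambda\otimes h)=\psi(\delta_{h^{-1}}^{g}(g\cdot\lambda))$ is precisely your $\Phi$, and where the paper simply writes down the inverse, you instead verify injectivity plus a dimension count and spell out the bimodule checks. One small overstatement: commutativity of $G$ is not actually essential here, since $agh=1\Leftrightarrow gha=1$ holds in any group and $a^{-1}(ag)=g$ regardless; the lemma goes through for arbitrary finite $G$, though of course the standing abelian hypothesis in the paper makes this moot.
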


\medskip

\begin{proof}
    The isomorphism of $\Lambda G$-$\Lambda G$-bimodules is given by

    \smallskip

    \begin{equation*}
        \begin{aligned}
            D(\Lambda G) && \longrightarrow &&&(D\Lambda) G \\
            \phi &&\longmapsto &&&\sum_{h\in G} \phi_{h}\otimes h \\
            \psi_{g} && \longmapsfrom &&& \psi \otimes g
        \end{aligned}
    \end{equation*}

    \smallskip

    \noindent where $\phi_{h} : \lambda \mapsto \phi((h^{-1}\, .\, \lambda )\otimes h^{-1})\in D\Lambda$ and $\psi_{g}: (\lambda \otimes h) \mapsto \psi(\delta_{h^{-1}}^{g} (g\, .\, \lambda)) \in D(\Lambda G)$ with $\delta$ being the Kronecker symbol.
\end{proof}

\medskip

Hence, the $G$-action on $\Lambda$ induces a $G$-action on $\T{\Lambda}$ which is given by

\[g\ . \ (a,\phi)=(g \, . \, a, g\, . \, \phi)\]

\noindent for all $g\in G$ and $(a,\phi)\in \T{\Lambda}$. In particular, one can construct the skew group algebra associated to $\T{\Lambda}$.

\medskip

\begin{prop} \label{prop:trivial extension of a skew group algebra}
    For any $k$-algebra $\Lambda$ with a $G$-action, there is an isomorphism of algebras

    \[\T{\Lambda G}\simeq \T{\Lambda}G\]
\end{prop}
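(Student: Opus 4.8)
The plan is to construct the isomorphism $\T{\Lambda G}\simeq\T{\Lambda}G$ explicitly and check it respects multiplication, using the bimodule isomorphism $D(\Lambda G)\simeq(D\Lambda)G$ established in the previous lemma. As $k$-vector spaces we have
\[
\T{\Lambda G}=\Lambda G\oplus D(\Lambda G)\simeq (\Lambda\otimes_k kG)\oplus\big((D\Lambda)\otimes_k kG\big)\simeq (\Lambda\oplus D\Lambda)\otimes_k kG=\T{\Lambda}G,
\]
so there is an obvious candidate linear map; the real content is that it is a ring homomorphism. So first I would write down the map on generators: send $(\lambda\otimes g,\ \phi\otimes g)$ (a typical element of $\T{\Lambda}G=\T{\Lambda}\otimes_k kG$, where I am grouping the $\Lambda$- and $D\Lambda$-components over the same group element $g$) to the pair $\big(\lambda\otimes g,\ \psi_g\big)\in\T{\Lambda G}$, where $\psi_g\in D(\Lambda G)$ is the image of $\phi\otimes g$ under the lemma's isomorphism, i.e. $\psi_g:(\mu\otimes h)\mapsto\phi(\delta^{g}_{h^{-1}}(g\,.\,\mu))$. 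Linearity is immediate, and bijectivity follows from the lemma together with the vector-space decomposition above.

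Next I would verify compatibility with multiplication. The multiplication in $\T{\Lambda}G$ is the skew group multiplication built from the $G$-action $g\,.\,(a,\phi)=(g\,.\,a,\ g\,.\,\phi)$ on $\T{\Lambda}$, so
\[
(a\otimes g,\phi\otimes g)\cdot(b\otimes h,\psi\otimes h)=\big((a(g\,.\,b))\otimes gh,\ (a(g\,.\,\psi)+(g\,.\,\phi)b)\otimes gh\big),
\]
where $a(g\,.\,\psi)$ and $(g\,.\,\phi)b$ denote the $\Lambda$-bimodule actions on $D\Lambda$. On the other side, the multiplication in $\T{\Lambda G}=\Lambda G\oplus D(\Lambda G)$ is $(x,\alpha)\cdot(y,\beta)=(xy,\ x\beta+\alpha y)$ using the $\Lambda G$-bimodule structure on $D(\Lambda G)$. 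The key computation is to show that under the candidate map the $D(\Lambda G)$-component $x\psi_h+\phi_g y$ (with $x=a\otimes g$, $y=b\otimes h$) coincides with the image of $(a(g\,.\,\psi)+(g\,.\,\phi)b)\otimes gh$. This reduces to two bimodule-action identities: that the image of $a(g\,.\,\psi)\otimes gh$ equals $(a\otimes g)\cdot\psi_h$ in $D(\Lambda G)$, and that the image of $(g\,.\,\phi)b\otimes gh$ equals $\phi_g\cdot(b\otimes h)$. Both are unwound by evaluating on an arbitrary element $\mu\otimes \ell\in\Lambda G$ and using the definition of the bimodule structures together with the formula $(D\Lambda)G\simeq D(\Lambda G)$ from the lemma; the $G$-action $g\,.\,\phi:b\mapsto\phi(g^{-1}\,.\,b)$ is exactly what makes the group elements match up.

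The main obstacle is purely bookkeeping: keeping track of which copy of $G$ and which action (left $G$-action on $\Lambda$, $\Lambda G$-bimodule action on $D(\Lambda G)$, $\Lambda$-bimodule action on $D\Lambda$) is being used at each step, and making sure the Kronecker-delta in $\psi_g$ interacts correctly with the convolution product $gh$. Since the previous lemma already packages the subtle part — the identification $D(\Lambda G)\simeq(D\Lambda)G$ as $\Lambda G$-bimodules — I expect the remaining verification to be a routine, if slightly tedious, check on generators, and I would present it by verifying the two displayed bimodule-action identities and then noting that multiplicativity on all of $\T{\Lambda}G$ follows by linearity.
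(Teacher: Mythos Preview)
Your approach is correct and essentially the same as the paper's: both build the isomorphism explicitly from the lemma's identification $(D\Lambda)G\simeq D(\Lambda G)$ and then verify multiplicativity by a direct computation on generators (the paper simply writes the map in the direction $\T{\Lambda G}\to\T{\Lambda}G$ first and then records your map as its inverse). One small slip in your displayed product: the $D\Lambda$-component should be $a(g\,.\,\psi)+\phi\,(g\,.\,b)$ rather than $a(g\,.\,\psi)+(g\,.\,\phi)b$, but this is exactly the bookkeeping issue you already flagged and does not affect the argument.
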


\medskip

\begin{proof}
    Let us define the following linear map
    
    \smallskip
    
    \begin{equation*}
        \begin{aligned}
            \Phi: &&\T{\Lambda G} &&&\longrightarrow &&\T{\Lambda}G \\
            &&(a\otimes g,\phi) &&&\longmapsto &&(a,0)\otimes g +\sum_{h\in G}(0,\phi_{h})\otimes h
        \end{aligned}   
    \end{equation*}

    \smallskip

    \noindent where $\phi_{h}$ is the morphism in $D\Lambda$ defined by 

    \[\phi_{h}:b\longmapsto \phi(h^{-1}\, .\, b\otimes h^{-1})\]
    
    \noindent We denote by $A:= \Phi(a\otimes g,\phi)\Phi(a'\otimes g',\phi')$. We have,

    \smallskip

    \begin{equation*}
        \begin{aligned}
            A&=\left[(a,0)\otimes g + \sum_{h\in G}(0,\phi_{h})\otimes h\right]\left[(a',0)\otimes g' + \sum_{h\in G}(0,\phi'_{h})\otimes h\right] \\
            &=(a(g\, .\, a'),0)\otimes gg'+ \sum_{h\in G}(0, \phi_{h}\, . \, (h\, . \, a'))\otimes hg'+ \sum_{h\in G}(0, a\, .\, (g\, .\, \phi'_{h}))\otimes gh \\
            &=(a(g\, .\, a'),0)\otimes gg'+\sum_{h\in G}(0,(\phi\, .\, (a'\otimes g'))_{h})\otimes h +\sum_{h\in G}(0,((a\otimes g)\, .\, \phi')_{h})\otimes h \\
            &=\Phi((a\otimes g,\phi)(a'\otimes g',\phi'))
        \end{aligned}
    \end{equation*}

    \smallskip

    \noindent for all $(a\otimes g,\phi),(a'\otimes g',\phi')\in\T{\Lambda G}$. Hence, $\Phi$ is a morphism of algebras. Moreover, one can easily check that $\Phi$ is also invertible. Its inverse is given by

    \smallskip

    \begin{equation*}
    \begin{aligned}
        \T{\Lambda}G &&&\longrightarrow &&\T{\Lambda G} \\
        (b,\psi)\otimes g &&&\longmapsto &&(b\otimes g, \psi_{g})
    \end{aligned}
    \end{equation*}

    \smallskip

    \noindent where $\psi_{g}$ is the morphism in $D(\Lambda G)$ given by

    \[\psi_{g}:(a\otimes h)\longmapsto \psi(\delta_{h^{-1}}^{g}(g\, . \, a))\]

    \noindent where $\delta$ denotes the Kronecker symbol. This concludes the proof.
\end{proof}

\medskip

\subsection{Morphism space of G-invariant objects}

\medskip

In this subsection, we are interested in the morphism space between $G$-invariant objects in the bounded derived category. The case of the endomorphism space of a $G$-invariant object has already been examined in \cite[Theorem 2.10]{AB}.

\medskip

A $G$-action on a finite dimensional $k$-algebra $\Lambda$ induces a right $G$-action on its bounded derived category $\Db{\Lambda}$ in the sense of \cite[Definition 3.1]{Elagin}. This action is given by the autoequivalences $-\lotimes{\Lambda}{\Lambda_{g}}$ for all $g\in G$, where $\Lambda_{g}$ is the $\Lambda$-module whose underlying group is $\Lambda$ and whose action is twisted by $g$. In the following, $X^{g}$ denotes $X\lotimes{\Lambda}\Lambda_{g}$ for all $X\in\Db{\Lambda}$ and $f^{g}:X^{g}\rightarrow Y^{g}$ denotes $f\lotimes{\Lambda}{\Lambda_{g}}$ for all morphism $f:X\rightarrow Y$ in $\Db{\Lambda}$. Moreover, 

\smallskip

\begin{center}
    \begin{tikzcd}[column sep=2cm]
        \Db{\Lambda} \arrow[cramped,yshift=1mm]{r}[above]{-\lotimes{\Lambda}{\Lambda G}} &\Db{\Lambda G} \arrow[cramped, yshift=-1mm]{l}[below]{\mathrm{Res}}
    \end{tikzcd}
\end{center}

\smallskip

\noindent form an adjoint pair of triangulated functors in both directions and the unit of the adjunction splits \cite[Lemma 2.3.1]{LeMeur}. In particular, we obtain a functorial isomorphism for all $X\in\Db{\Lambda}$

\begin{equation} \label{eq:restriction}
    \mathrm{Res}(X\lotimes{\Lambda}{\Lambda G})\simeq \bigoplus_{g\in G}X^{g}
\end{equation}

\medskip

\begin{defn} \label{def:G-invariant object}
    An object $X\in\Db{\Lambda}$ is said to be \textit{$G$-invariant} if there exist isomorphisms $\iota_{g}:X^{g^{-1}}\rightarrow X$
    for all $g\in G$ satisfying that $\iota_{gh}=\iota_{g}(\iota_{h})^{g^{-1}}$ for all $g,h\in G$.
\end{defn}

\medskip

\begin{ex} \label{ex:G-invariant object}

    \begin{enumerate}[label=(\arabic*)]
    \item \label{item:G-invariant object 1} It is easy to check that $\Lambda$ is a $G$-invariant object in $\Db{\Lambda}$ where the isomorphisms $\iota_{g}:\Lambda^{g^{-1}}\simeq\Lambda_{g^{-1}}\rightarrow\Lambda$ are given by the action of $g\in G$. Moreover, if $e$ is an idempotent of $\Lambda$ stable under the action of $G$, the same isomorphisms give us that $e\Lambda$ is also a $G$-invariant object.

    \item \label{item:G-invariant object 2} Let $f:X\rightarrow Y$ be a morphism in $\Db{\Lambda}$ between two $G$-invariant objects. We denote by $\iota_{g}^{X}:X^{g^{-1}}\rightarrow X$ and $\iota_{g}^{Y}:Y^{g^{-1}}\rightarrow Y$ the isomorphisms corresponding to $X$ and $Y$ respectively. Let us assume that for all $g\in G$ the following diagram commutes

    \smallskip

    \begin{center}
        \begin{tikzcd}[row sep=1.5cm, column sep=1.5cm]
            X^{g^{-1}} \arrow{d}[left]{\iota_{g}^{X}} \arrow{r}[above]{f^{g^{-1}}} &Y^{g^{-1}} \arrow{d}[right]{\iota_{g}^{Y}} \\
            X \arrow{r}[below]{f} &Y
        \end{tikzcd}
    \end{center}

    \smallskip
        
    \noindent Then, the cone of $f$ denoted $\mathrm{Cone}(f)$ is also $G$-invariant. Indeed, one can check that the isomorphisms $\iota_{g}:\mathrm{Cone}(f)^{g^{-1}}\rightarrow\mathrm{Cone}(f)$ defined by

    \begin{equation*}
        \iota_{g}=\begin{pmatrix}
            \iota_{g}^{X}[1] &0 \\[0.7em]
            0 &\iota_{g}^{Y}
        \end{pmatrix}
    \end{equation*}

    \noindent for all $g\in G$ satisfy the condition of Definition \ref{def:G-invariant object}.
    \end{enumerate}
\end{ex}

\medskip

We now adapt the results about the endomorphism algebra of a $G$-invariant object from \cite{AB} to the morphism space of $G$-invariant objects. The proof of the following statements follows the same outline than the proofs of Lemma 2.5 and Proposition 2.8 in \cite{AB}.

\medskip

\begin{lem} \label{lem:G acts on morphism space}
    Let $X$ and $Y$ be two $G$-invariant objects in $\Db{\Lambda}$. Then, $G$ acts on the morphism space $\Hom[\Db{\Lambda}]{X}{Y}$ as follows

    \[g\ .\ f=\iota_{g}^{Y}\circ f^{g^{-1}}\circ (\iota_{g}^{X})^{-1}\]

    \noindent for all $g\in G$ and $f\in\Hom[\Db{\Lambda}]{X}{Y}$, where $\iota_{g}^{X}:X^{g^{-1}}\rightarrow X$ and $\iota_{g}^{Y}:Y^{g^{-1}}\rightarrow Y$ are the isomorphisms corresponding to $X$ and $Y$ respectively.
\end{lem}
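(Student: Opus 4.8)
The plan is to verify directly that the assignment $g \cdot f = \iota_g^Y \circ f^{g^{-1}} \circ (\iota_g^X)^{-1}$ defines a group action, i.e. that $e \cdot f = f$ for the identity $e \in G$ and that $g \cdot (h \cdot f) = (gh) \cdot f$ for all $g, h \in G$. First I would check that each $g \cdot f$ is a well-defined element of $\Hom[\Db{\Lambda}]{X}{Y}$: this is immediate since $f^{g^{-1}} \colon X^{g^{-1}} \to Y^{g^{-1}}$ by functoriality of $- \lotimes{\Lambda}{\Lambda_{g^{-1}}}$, and $\iota_g^X$, $\iota_g^Y$ are isomorphisms, so the composite $Y^{g^{-1}} \to Y$ postcomposed and $X \to X^{g^{-1}}$ precomposed lands in $\Hom[\Db{\Lambda}]{X}{Y}$. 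I should also note that $g \cdot (-)$ is $k$-linear (it is a composition with fixed morphisms on both sides, and $(-)^{g^{-1}}$ is additive and $k$-linear).

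For the identity axiom, I would recall from Definition \ref{def:G-invariant object} that taking $g = h = e$ in the cocycle condition $\iota_{gh} = \iota_g (\iota_h)^{g^{-1}}$ gives $\iota_e = \iota_e (\iota_e)^e = \iota_e \circ \iota_e$ (using that $(-)^e$ is the identity functor, up to the canonical identification $X^e \simeq X$), so $\iota_e = \id{X}$, and likewise $\iota_e^Y = \id{Y}$; also $f^e = f$. Hence $e \cdot f = \iota_e^Y \circ f^e \circ (\iota_e^X)^{-1} = f$. For the composition axiom, I would expand
\[
g \cdot (h \cdot f) = \iota_g^Y \circ \bigl(\iota_h^Y \circ f^{h^{-1}} \circ (\iota_h^X)^{-1}\bigr)^{g^{-1}} \circ (\iota_g^X)^{-1},
\]
and use that $(-)^{g^{-1}}$ is a functor, so it distributes over composition:
\[
\bigl(\iota_h^Y \circ f^{h^{-1}} \circ (\iota_h^X)^{-1}\bigr)^{g^{-1}} = (\iota_h^Y)^{g^{-1}} \circ f^{h^{-1}g^{-1}} \circ \bigl((\iota_h^X)^{-1}\bigr)^{g^{-1}},
\]
where I also use the composition of the twisting functors $(-)^{h^{-1}} \circ (-)^{g^{-1}} \simeq (-)^{h^{-1}g^{-1}} = (-)^{(gh)^{-1}}$ coming from the $G$-action on $\Db{\Lambda}$ (the pseudofunctor structure of $g \mapsto -\lotimes{\Lambda}\Lambda_g$). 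Then I would regroup: $\iota_g^Y \circ (\iota_h^Y)^{g^{-1}} = \iota_{gh}^Y$ and $\bigl((\iota_h^X)^{-1}\bigr)^{g^{-1}} \circ (\iota_g^X)^{-1} = \bigl(\iota_g^X \circ (\iota_h^X)^{g^{-1}}\bigr)^{-1} = (\iota_{gh}^X)^{-1}$, both by the cocycle identity of Definition \ref{def:G-invariant object}. This yields $g \cdot (h \cdot f) = \iota_{gh}^Y \circ f^{(gh)^{-1}} \circ (\iota_{gh}^X)^{-1} = (gh) \cdot f$, as desired.

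The main obstacle, as usual in this kind of bookkeeping, is not any deep idea but keeping the coherence isomorphisms of the (pseudo)$G$-action on $\Db{\Lambda}$ straight — namely the canonical natural isomorphisms $X^e \simeq X$ and $(X^{h^{-1}})^{g^{-1}} \simeq X^{h^{-1}g^{-1}}$ — and checking that $\iota_g^X$ and the naturality squares $(f^{g^{-1}})$ are compatible with them, so that the regroupings above are legitimate and not merely formal. Since $G$ is abelian this is somewhat cleaner, and the argument is in any case a direct transcription of the proof of \cite[Lemma 2.5]{AB} from the endomorphism case $X = Y$ to the general morphism space, as indicated in the text preceding the statement; so I would carry out the computation at the level shown above and leave the coherence verifications to the reader as routine.
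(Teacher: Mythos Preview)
Your proposal is correct and is exactly the approach the paper intends: the paper does not give its own proof but simply states that it ``follows the same outline than the proofs of Lemma 2.5 and Proposition 2.8 in \cite{AB}'', and your direct verification of the action axioms via the cocycle identity $\iota_{gh}=\iota_g(\iota_h)^{g^{-1}}$ is precisely that transcription from the $X=Y$ case to general $X,Y$.
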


\medskip

 Given two $G$-invariant objects $X$ and $Y$ in $\Db{\Lambda}$, $\Hom[\Db{\Lambda}]{X}{Y}G=\Hom[\Db{\Lambda}]{X}{Y}\otimes_{k}kG$ has a natural structure of $(\mathrm{End}_{\Db{\Lambda}}(Y)G)$-$(\mathrm{End}_{\Db{\Lambda}}(X)G)$-bimodule (cf after Definition \ref{def:skew group algebra}). Moreover, thanks to Theorem 2.10 in \cite{AB}, there is an isomorphism of algebras

\[\mathrm{End}_{\Db{\Lambda}}(Z)G\simeq\mathrm{End}_{\Db{\Lambda G}}(Z\lotimes{\Lambda}{\Lambda G})\]

\noindent for all $G$-invariant object $Z$ in $\Db{\Lambda}$. Hence, $\Hom[\Db{\Lambda G}]{X\lotimes{\Lambda}{\Lambda G}}{Y\lotimes{\Lambda}{\Lambda G}}$ has also a natural structure of $(\mathrm{End}_{\Db{\Lambda}}(Y)G)$-$(\mathrm{End}_{\Db{\Lambda}}(X)G)$-bimodule.

\medskip

\begin{prop} \label{prop:isomorphism of morphism space}
    Let $X$ and $Y$ be two $G$-invariant objects in $\Db{\Lambda}$ and $\iota_{g}^{X}:X^{g^{-1}}\rightarrow X$ and $\iota_{g}^{Y}:Y^{g^{-1}}\rightarrow Y$ be the isomorphisms corresponding to $X$ and $Y$ respectively.  Then, there is an isomorphism of $(\mathrm{End}_{\Db{\Lambda}}(Y)G)$-$(\mathrm{End}_{\Db{\Lambda}}(X)G)$-bimodules given by 

    \begin{equation*}
        \begin{aligned}
            \Hom[\Db{\Lambda}]{X}{Y}G &&\longrightarrow &&&\Hom[\Db{\Lambda G}]{X\lotimes{\Lambda}{\Lambda G}}{Y\lotimes{\Lambda}{\Lambda G}} \\
            f\otimes g &&\longmapsto &&& ((f\circ\iota_{g}^{X})\otimes 1_{\Lambda G})\circ L_{g}^{X}
        \end{aligned}
    \end{equation*}

    \noindent where $L_{g}^{X}:X\lotimes{\Lambda}{\Lambda G}\rightarrow X^{g^{-1}}\lotimes{\Lambda}{\Lambda G}$ is the isomorphism induced by the multiplication on the left by $1\otimes g$ in $\Lambda G$.
\end{prop}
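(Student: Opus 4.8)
The plan is to verify directly that the displayed map, call it $\Psi$, is a well-defined isomorphism of $k$-vector spaces and then check that it intertwines the two bimodule actions. For well-definedness, I would first note that for each fixed $g\in G$ the composite $((f\circ\iota_g^X)\otimes 1_{\Lambda G})\circ L_g^X$ makes sense: $L_g^X$ is an isomorphism $X\lotimes{\Lambda}{\Lambda G}\to X^{g^{-1}}\lotimes{\Lambda}{\Lambda G}$ because left multiplication by $1\otimes g$ is a $\Lambda$-linear isomorphism of $\Lambda G$ sending the $\Lambda$-submodule structure appropriately (this uses that $\Lambda G$ is free over $\Lambda$ and the decomposition \eqref{eq:restriction}); and $f\circ\iota_g^X:X^{g^{-1}}\to Y$ is a morphism in $\Db{\Lambda}$, so tensoring with $1_{\Lambda G}$ gives a morphism in $\Db{\Lambda G}$. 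Linearity in $f$ is clear, so $\Psi$ is a well-defined $k$-linear map.

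Next I would prove bijectivity. Using the restriction isomorphism \eqref{eq:restriction} together with the adjunction $(-\lotimes{\Lambda}{\Lambda G},\mathrm{Res})$, one gets
\[
\Hom[\Db{\Lambda G}]{X\lotimes{\Lambda}{\Lambda G}}{Y\lotimes{\Lambda}{\Lambda G}}\simeq \Hom[\Db{\Lambda}]{X}{\mathrm{Res}(Y\lotimes{\Lambda}{\Lambda G})}\simeq\bigoplus_{g\in G}\Hom[\Db{\Lambda}]{X}{Y^{g}}.
\]
Since $Y$ is $G$-invariant, each $\iota_{g^{-1}}^{Y}:Y^{g}\to Y$ is an isomorphism, so $\Hom[\Db{\Lambda}]{X}{Y^{g}}\simeq\Hom[\Db{\Lambda}]{X}{Y}$, and the total dimension matches $\dim_k\Hom[\Db{\Lambda}]{X}{Y}\cdot|G|=\dim_k\left(\Hom[\Db{\Lambda}]{X}{Y}G\right)$. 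I would then check that $\Psi$ is compatible with these identifications — the factor $\iota_g^X$ and the twist $L_g^X$ are precisely what land the $g$-component in $\Hom[\Db{\Lambda}]{X}{Y^{?}}$ correctly — so $\Psi$ is injective, hence bijective by dimension count. This is essentially the same bookkeeping as in the proof of \cite[Proposition 2.8]{AB}, specialized from $X=Y$ to the general case.

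Finally, the bimodule compatibility. For the right action by $\mathrm{End}_{\Db{\Lambda}}(X)G$, take $\varphi\otimes h\in\mathrm{End}_{\Db{\Lambda}}(X)G$ and compute $\Psi((f\otimes g)\cdot(\varphi\otimes h))$; by the definition of the bimodule structure after Definition \ref{def:skew group algebra} this is $\Psi\left(f\circ(g\, .\, \varphi)\otimes gh\right)$, which one expands using Lemma \ref{lem:G acts on morphism space} (so $g\, .\, \varphi=\iota_g^X\circ\varphi^{g^{-1}}\circ(\iota_g^X)^{-1}$) and the cocycle relation $\iota_{gh}^X=\iota_g^X(\iota_h^X)^{g^{-1}}$ from Definition \ref{def:G-invariant object}. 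On the other side, $\Psi(f\otimes g)$ precomposed with the image of $\varphi\otimes h$ under the algebra isomorphism $\mathrm{End}_{\Db{\Lambda}}(X)G\simeq\mathrm{End}_{\Db{\Lambda G}}(X\lotimes{\Lambda}{\Lambda G})$ of \cite[Theorem 2.10]{AB} should produce the same morphism after simplifying the composites $L_g^X$, $L_h^X$ via $L_g^X$ being induced by left multiplication (so $L_{gh}^X=(L_h^X)^{g^{-1}}\circ L_g^X$ in the appropriate sense). The left action by $\mathrm{End}_{\Db{\Lambda}}(Y)G$ is analogous and slightly easier since no twist on the $Y$-side enters the formula for $\Psi$. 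I expect the main obstacle to be exactly this last step: keeping straight the several twisting isomorphisms ($\iota_g^X$, $\iota_g^Y$, $L_g^X$, the functor $(-)^{g}$ applied to morphisms) and verifying the coherence of the cocycle identities so that both sides agree on the nose rather than merely up to a scalar or a reindexing of $G$. Once that verification is carried out for $g$, $h$ ranging over a generating set it extends to all of $G$ by linearity and the multiplicativity of the algebra isomorphism from \cite{AB}.
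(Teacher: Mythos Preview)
Your proposal is correct and follows essentially the same approach as the paper: the paper does not give an explicit proof but states that it follows the outline of Lemma~2.5 and Proposition~2.8 in \cite{AB}, and your sketch (well-definedness, bijectivity via the adjunction and \eqref{eq:restriction}, then bimodule compatibility using the cocycle relation of Definition~\ref{def:G-invariant object} and Lemma~\ref{lem:G acts on morphism space}) is exactly that adaptation from the case $X=Y$ to two distinct $G$-invariant objects.
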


\medskip

\subsection{Silting mutation}

\medskip

Let us now study silting mutations for skew group algebras. We first recall the definition and some properties of silting mutations given in \cite{AI}. Throughout this section, $\Lambda$ denotes a finite dimensional $k$-algebra. For any object $M$ in $\per{\Lambda}$, $\mathrm{add}(M)$ denotes the full subcategory of $\per{\Lambda}$ consisting of direct sums of direct summands of $M$ and $\mathrm{thick}(M)$ denotes the smallest triangulated subcategory of $\per{\Lambda}$ containing $M$ that is closed under direct summands.

\medskip

\begin{defn}\label{def:silting and tilting object}

Let $M$ be an object in $\per{\Lambda}$.
\smallskip
    
     \begin{itemize}[label=\textbullet, font=\tiny]
        \item $M$ is said to be \textit{silting} if $\Hom[\per{\Lambda}]{M}{M[>0]}=0$ and $\mathrm{thick}(M)=\per{\Lambda}$.
        \item $M$ is said to be \textit{tilting} if $M$ is silting and $\Hom[\per{\Lambda}]{M}{M[<0]}=0$.
            
     \end{itemize}
\end{defn}

\medskip

\begin{defn}\label{def:left mutation}
   Let $M$ be an object in $\per{\Lambda}$.

   \smallskip

    \begin{itemize}[label=\textbullet, font=\tiny]
        \item We say that a morphism $f:X\rightarrow Y$ in $\per{\Lambda}$ is \textit{left minimal} if the only morphisms $h\in\mathrm{End}_{\per{\Lambda}}(Y)$ satisfying $hf=f$ are isomorphisms.
        \item We say that a morphism $f:X\rightarrow Y$ in $\per{\Lambda}$ is a \textit{left $\mathrm{add}(M)$-approximation} of $X$ if $Y\in\mathrm{add}(M)$ and $\Hom[\per{\Lambda}]{f}{M}$ is surjective.
        \item We say that a morphism $f:X\rightarrow Y$ in $\per{\Lambda}$ is a \textit{left minimal $\mathrm{add}(M)$-approximation} of $X$ if $f$ is left minimal and a left $\mathrm{add}(M)$-approximation of $X$. It is unique up to isomorphisms.
        
        \item Let $M_{0}$ be a direct summand of $M$ and $f:M/M_{0}\rightarrow M_{0}'$ be the left minimal $\mathrm{add}(M_{0})$-approximation of $M/M_{0}$. The \textit{left mutation} of $M$ over $M_{0}$ is the object 
        
        \[\mu^{+}(M;M_{0}):=M_{0}\,\oplus\, \mathrm{Cone}(f)\in\per{\Lambda}\]
        
        \noindent where $\mathrm{Cone}(f)$ denotes the cone of $f$.

        \end{itemize}
\end{defn}

\medskip

\begin{thm}[Aihara-Iyama {\cite[Theorem 2.32]{AI}}] \label{thm:AI}

\phantom{} 
\begin{itemize}[label={\textbullet}, font=\tiny]

\item Any left mutation of a silting object of $\per{\Lambda}$ is a silting object of $\per{\Lambda}$.

\item For any $M_{0}$ direct summand of a tilting object $M$ in $\per{\Lambda}$, the left mutation $\mu^{+}(M;M_{0})$ is tilting if and only if $\Hom[\per{\Lambda}]{M_{0}}{f}$ is injective where $f$ is the left minimal $\mathrm{add}(M_{0})$-approximation of $M/M_{0}$.

\end{itemize}

\end{thm}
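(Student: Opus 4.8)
The plan is to run everything through the triangle defining the mutation. Write $M = M_{0}\oplus M_{1}$ with $M_{1}=M/M_{0}$, let $f\colon M_{1}\rightarrow M_{0}'$ be the left minimal $\mathrm{add}(M_{0})$-approximation, and complete $f$ to a triangle $M_{1}\xrightarrow{f}M_{0}'\xrightarrow{g}N_{1}\xrightarrow{h}M_{1}[1]$ in $\per{\Lambda}$, so that $\mu^{+}(M;M_{0})=M_{0}\oplus N_{1}$. The two properties of $f$ that will be used are that $M_{0}'\in\mathrm{add}(M_{0})$ and that $\Hom[\per{\Lambda}]{f}{M_{0}}\colon\Hom[\per{\Lambda}]{M_{0}'}{M_{0}}\rightarrow\Hom[\per{\Lambda}]{M_{1}}{M_{0}}$ is surjective; left minimality of $f$ will not be needed for either statement.

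For the first bullet, the generation condition is immediate: the triangle exhibits $M_{1}$ in $\mathrm{thick}(M_{0}'\oplus N_{1})\subseteq\mathrm{thick}(\mu^{+}(M;M_{0}))$, whence $M\in\mathrm{thick}(\mu^{+}(M;M_{0}))$ and $\per{\Lambda}=\mathrm{thick}(M)\subseteq\mathrm{thick}(\mu^{+}(M;M_{0}))$. For the vanishing $\Hom[\per{\Lambda}]{\mu^{+}(M;M_{0})}{\mu^{+}(M;M_{0})[>0]}=0$, I would split into the four Hom-spaces among $M_{0}$ and $N_{1}$ and, for each, apply $\Hom[\per{\Lambda}]{-}{-}$ to suitable shifts of the triangle; this reduces everything to Hom-spaces among $M_{0},M_{0}',M_{1}$ in strictly positive degrees, all of which vanish because $M$ is silting and $M_{0}'\in\mathrm{add}(M_{0})$. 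The one place where this is not purely formal is $\Hom[\per{\Lambda}]{N_{1}}{M_{0}[1]}$: here the long exact sequence presents it as a quotient of $\Hom[\per{\Lambda}]{M_{1}}{M_{0}}$, and it is precisely surjectivity of $\Hom[\per{\Lambda}]{f}{M_{0}}$ that forces this quotient to vanish. Once $\Hom[\per{\Lambda}]{N_{1}}{M_{0}[>0]}=0$ is in hand, the remaining piece $\Hom[\per{\Lambda}]{N_{1}}{N_{1}[>0]}$ drops out by feeding it back into the long exact sequence coming from the triangle.

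For the second bullet, assume in addition that $M$ is tilting. Since $\mu^{+}(M;M_{0})$ is already silting, it is tilting if and only if $\Hom[\per{\Lambda}]{\mu^{+}(M;M_{0})}{\mu^{+}(M;M_{0})[<0]}=0$, and again I would examine the four pieces. The pieces $\Hom[\per{\Lambda}]{M_{0}}{M_{0}[<0]}$ and $\Hom[\per{\Lambda}]{N_{1}}{M_{0}[<0]}$ vanish unconditionally from the negative-degree vanishing for $M$ together with the triangle. The decisive piece is $\Hom[\per{\Lambda}]{M_{0}}{N_{1}[<0]}$: the long exact sequence makes it vanish in degrees $\le -2$ for free, while in degree $-1$ it is isomorphic to $\ker\big(\Hom[\per{\Lambda}]{M_{0}}{f}\colon\Hom[\per{\Lambda}]{M_{0}}{M_{1}}\rightarrow\Hom[\per{\Lambda}]{M_{0}}{M_{0}'}\big)$; hence $\Hom[\per{\Lambda}]{M_{0}}{N_{1}[<0]}=0$ exactly when $\Hom[\per{\Lambda}]{M_{0}}{f}$ is injective. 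Finally, $\Hom[\per{\Lambda}]{N_{1}}{N_{1}[<0]}=0$ is deduced from $\Hom[\per{\Lambda}]{M_{0}}{N_{1}[<0]}=0$ together with $\Hom[\per{\Lambda}]{M_{1}}{N_{1}[i]}=0$ for $i\le -2$ (itself a consequence of $M$ being tilting) via the triangle, so no new condition appears; assembling the four computations yields the stated equivalence.

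The main obstacle is purely organizational: carefully tracking the degree shifts in the several long exact sequences and isolating the boundary degrees $\pm 1$, where the silting (resp.\ tilting) vanishing of $M$ no longer covers everything. Those are exactly the spots where the approximation property of $f$ (for the first bullet) and the injectivity of $\Hom[\per{\Lambda}]{M_{0}}{f}$ (for the second) have to be invoked.
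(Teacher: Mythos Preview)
Your argument is correct and is essentially the standard long-exact-sequence bookkeeping proof from \cite[Theorem~2.31 and Proposition~2.33]{AI}. Note, however, that the present paper does not supply its own proof of this statement: it is quoted as a black box from Aihara--Iyama, so there is no in-paper proof to compare against. Your write-up would therefore serve as a self-contained substitute for the citation rather than an alternative to anything the authors do.
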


\medskip

\begin{rem} \label{rem:right mutation}
    We can define dually a right approximation and a right mutation. Moreover, we have a dual version of Theorem \ref{thm:AI} for right mutations.
\end{rem}

\medskip

We want now to prove the following result on silting mutations for skew group algebras.

\medskip

\begin{prop} \label{prop:silting mutation in skew group algebras}
    Let $\Lambda$ be a finite dimensional $k$-algebra with a $G$-action. Let $M$ be a silting object in $\per{\Lambda}$ and $M_{0}$ be a direct summand of $M$. We denote by $f:M/M_{0}\rightarrow M_{0}'$ the left minimal $\mathrm{add}(M_{0})$-approximation of $M/M_{0}$. If $M_{0}$, $M/M_{0}$ and $M_{0}'$ are $G$-invariant, then there is an isomorphism in $\per{\Lambda G}$

    \[\mu^{+}(M\lotimes{\Lambda}{\Lambda G};M_{0}\lotimes{\Lambda}{\Lambda G})\simeq\mu^{+}(M;M_{0})\lotimes{\Lambda}{\Lambda G}\]
\end{prop}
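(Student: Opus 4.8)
The plan is to show that applying $-\lotimes{\Lambda}{\Lambda G}$ to the defining triangle of the left mutation produces the defining triangle of the left mutation over $\Lambda G$, using that $-\lotimes{\Lambda}{\Lambda G}$ is a triangulated functor. Write $F=-\lotimes{\Lambda}{\Lambda G}:\per{\Lambda}\rightarrow\per{\Lambda G}$. By definition, $\mu^{+}(M;M_{0})=M_{0}\oplus\mathrm{Cone}(f)$, and since $F$ is triangulated it sends $\mathrm{Cone}(f)$ to $\mathrm{Cone}(F(f))$ and commutes with direct sums, so $F(\mu^{+}(M;M_{0}))\simeq F(M_{0})\oplus\mathrm{Cone}(F(f))$. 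On the other side, $\mu^{+}(F(M);F(M_{0}))=F(M_{0})\oplus\mathrm{Cone}(g)$ where $g:F(M)/F(M_{0})\rightarrow (F(M_{0}))'$ is the left minimal $\mathrm{add}(F(M_{0}))$-approximation of $F(M)/F(M_{0})\simeq F(M/M_{0})$. So it suffices to prove that $F(f):F(M/M_{0})\rightarrow F(M_{0}')$ is a left minimal $\mathrm{add}(F(M_{0}))$-approximation; then uniqueness of left minimal approximations up to isomorphism gives $\mathrm{Cone}(F(f))\simeq\mathrm{Cone}(g)$ and we are done.

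First I would verify that $F(f)$ is a left $\mathrm{add}(F(M_{0}))$-approximation. Clearly $F(M_{0}')\in\mathrm{add}(F(M_{0}))$ since $M_{0}'\in\mathrm{add}(M_{0})$ and $F$ is additive. For the approximation (surjectivity) property, I need that every morphism $F(M/M_{0})\rightarrow F(M_{0})$ factors through $F(f)$; equivalently $\Hom[\per{\Lambda G}]{F(f)}{F(M_{0})}$ is surjective. This is where the $G$-invariance hypotheses enter: by Proposition~\ref{prop:isomorphism of morphism space}, for $G$-invariant objects $X,Y$ there is an isomorphism $\Hom[\Db{\Lambda}]{X}{Y}G\simeq\Hom[\Db{\Lambda G}]{FX}{FY}$ as bimodules, and this identification is compatible with composition (it is the bimodule-level refinement of the algebra isomorphism $\mathrm{End}(Z)G\simeq\mathrm{End}(FZ)$ from \cite{AB}). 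Since $M/M_{0}$, $M_{0}$, $M_{0}'$ are all $G$-invariant, applying $-\otimes_{k}kG$ to the surjection $\Hom{f}{M_{0}}:\Hom{M_{0}'}{M_{0}}\twoheadrightarrow\Hom{M/M_{0}}{M_{0}}$ — which stays surjective since $kG$ is free hence flat over $k$ — and transporting along the isomorphisms of Proposition~\ref{prop:isomorphism of morphism space} yields surjectivity of $\Hom{F(f)}{F(M_{0})}$. (One must check the image of $f\otimes 1\in\Hom{M/M_{0}}{M_{0}'}\otimes kG$ under the proposition's map is $F(f)$ up to the canonical identifications, which follows from the formula $f\otimes g\mapsto((f\circ\iota_g^X)\otimes 1)\circ L_g^X$ evaluated at $g=1$ with $\iota_1=\mathrm{id}$.)

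Next I would handle left minimality of $F(f)$. Recall $f$ is left minimal means the only $h\in\mathrm{End}_{\per{\Lambda}}(M_{0}')$ with $hf=f$ are isomorphisms. Since $M_0'$ is $G$-invariant and $M_0'\in\mathrm{add}(M_0)$, left minimality of $f$ is equivalent — via the standard characterization of left minimal maps in terms of the radical, or via Fitting's lemma — to saying $f$ has no nonzero direct summand of the form $0\rightarrow M''$ with $M''\in\mathrm{add}(M_0')$. I would argue that $F$ reflects/preserves this: a decomposition of $F(f)$ exhibiting such a summand would, after restriction $\mathrm{Res}$ and using \eqref{eq:restriction}, $\mathrm{Res}(F(f))\simeq\bigoplus_{g\in G}f^g$, contradict minimality of $f$ (each $f^g$ is left minimal since $-\lotimes{\Lambda}\Lambda_g$ is an autoequivalence, and a direct sum of left minimal maps whose targets share no common summand pattern is left minimal — but more cleanly, a zero-target summand of $F(f)$ restricts to a zero-target summand of $\bigoplus_g f^g$, hence of some $f^g$, contradiction). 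Alternatively, invoke that $\mathrm{add}(F(M_0))$-approximations are unique up to isomorphism and that the minimal one is a direct summand of any approximation: $F(f)$ is an approximation, so it contains the minimal one $g$ as a summand, $F(f)\simeq g\oplus(0\rightarrow N)$; then $\mathrm{Res}$ and \eqref{eq:restriction} together with $\mathrm{Res}(g)=\bigoplus$ of the minimal approximations of $f^{g'}$ force $N=0$ by comparing with the minimality of each $f^{g'}$.

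The main obstacle I anticipate is the bookkeeping in the second paragraph: making precise that the bimodule isomorphism of Proposition~\ref{prop:isomorphism of morphism space} is compatible with composition of morphisms (so that ``$f$ factors through'' is genuinely transported), and checking that $f\otimes 1$ maps to $F(f)$, so that the lifted factorization is actually through $F(f)$ and not merely through some abstractly isomorphic object. Everything else is formal: triangulated functors preserve cones and biproducts, $kG$ is $k$-flat, and left minimal approximations are unique. If the compatibility-with-composition statement is not explicitly in the excerpt, I would add a short remark deriving it from the construction in \cite{AB}, or else argue minimality directly via the restriction functor as in the alternative above, which sidesteps the need for the composition-compatibility and uses only \eqref{eq:restriction} plus the autoequivalence property of the twists.
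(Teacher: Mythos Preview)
Your overall strategy---reduce to showing that $F(f)$ is a left minimal $\mathrm{add}(F(M_{0}))$-approximation---matches the paper exactly, but your tactics for the two sub-claims differ from the paper's.

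For the \emph{approximation} property, the paper avoids the composition-compatibility issue you flag by not using Proposition~\ref{prop:isomorphism of morphism space} at all. Instead it applies $\Hom[\per{\Lambda G}]{-}{F(M_{0})}$ to the triangle $F(M/M_{0})\xrightarrow{F(f)}F(M_{0}')\to\mathrm{Cone}(F(f))\to$, so surjectivity of $\Hom{F(f)}{F(M_{0})}$ reduces to the vanishing of $\Hom[\per{\Lambda G}]{F(\mathrm{Cone}(f))}{F(M_{0})[1]}$. By the adjunction $(-\lotimes{\Lambda}{\Lambda G},\mathrm{Res})$ and \eqref{eq:restriction} this is $\Hom[\per{\Lambda}]{\mathrm{Cone}(f)}{\bigoplus_{g}M_{0}^{g}[1]}\simeq\Hom[\per{\Lambda}]{\mathrm{Cone}(f)}{M_{0}[1]}^{|G|}$ (using $G$-invariance of $M_{0}$), which vanishes because $\mu^{+}(M;M_{0})$ is silting by Theorem~\ref{thm:AI}. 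This is cleaner than your route: it uses only adjunction and the silting property, and entirely sidesteps the bookkeeping about whether the bimodule isomorphisms of Proposition~\ref{prop:isomorphism of morphism space} intertwine precomposition by $f$ with precomposition by $F(f)$.

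For \emph{left minimality}, the paper isolates this as a separate Lemma (Lemma~\ref{lem:left minimal morphism in skew group algebra}) and proves it by a direct computation in $\mathrm{End}_{\per{\Lambda}}(Y)G\simeq\mathrm{End}_{\per{\Lambda G}}(F(Y))$: writing an arbitrary $h=\sum h_{g_{i}}\otimes g_{i}$ with $h\circ F(f)=F(f)$, one deduces $h_{g_{0}}f=f$ and $h_{g_{i}}\circ(g_{i}.f)=0$ for $i\neq 0$, and then constructs a two-sided inverse of $h$ by an explicit induction. Your restriction argument is a legitimate alternative: in the Krull--Schmidt setting of $\per{\Lambda}$, left minimality is equivalent to having no summand $0\to N$ in the arrow category, so $\mathrm{Res}(F(f))\simeq\bigoplus_{g}f^{g}$ being a direct sum of left minimal maps (each $f^{g}$ is left minimal since $-\lotimes{\Lambda}\Lambda_{g}$ is an autoequivalence) forces $F(f)$ to have no such summand either. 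This is arguably more conceptual than the paper's computation, though it implicitly uses Krull--Schmidt for the arrow category.

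In summary: your proof is correct. The paper's argument for the approximation step is more efficient and avoids precisely the ``compatibility with composition'' obstacle you anticipated; your argument for minimality via restriction is a nice alternative to the paper's hands-on lemma.
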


\medskip

\begin{rem} \label{rem:AB}
    Let us assume that the hypotheses of the previous proposition hold. Thanks to Theorem 2.10 of \cite{AB}, we conclude that $\mu^{+}(M\lotimes{\Lambda}{\Lambda G};M_{0}\lotimes{\Lambda}{\Lambda G})$ is tilting in $\per{\Lambda G}$ if $\mu^{+}(M;M_{0})$ is tilting in $\per{\Lambda}$ and $G$-invariant.
\end{rem}

\medskip

In order to prove the previous proposition, we need to study how the functor $-\lotimes{\Lambda}{\Lambda G}$ behaves with left minimal morphisms.

\medskip

\begin{lem} \label{lem:left minimal morphism in skew group algebra}
    Let $\Lambda$ be a finite dimensional $k$-algebra with a $G$-action and $f:X\rightarrow Y$ be a left minimal morphism in $\per{\Lambda}$. If $X$ and $Y$ are $G$-invariant, then $f\lotimes{\Lambda}{\Lambda G}$ is a left minimal morphism in $\per{\Lambda G}$.
\end{lem}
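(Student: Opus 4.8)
The plan is to test left minimality after restriction of scalars along $\Lambda\hookrightarrow\Lambda G$, reducing everything to the elementary fact that a finite direct sum of left minimal morphisms is left minimal. Write $\mathrm{Res}\colon\per{\Lambda G}\to\per\Lambda$; this lands in $\per\Lambda$ because $\mathrm{Res}(\Lambda G)\cong\Lambda^{|G|}$ is free. I will use two easy properties of $\mathrm{Res}$: it reflects isomorphisms (it is triangulated, commutes with cones, and $\mathrm{Res}(C)\cong 0$ forces $C\cong 0$), and, by the functorial isomorphism \eqref{eq:restriction} applied to $f$, there is an isomorphism of arrows $\mathrm{Res}(f\lotimes{\Lambda}{\Lambda G})\simeq\bigoplus_{g\in G}f^{g}$.

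Granting these, let $\psi\in\mathrm{End}_{\per{\Lambda G}}(Y\lotimes{\Lambda}{\Lambda G})$ satisfy $\psi\circ(f\lotimes{\Lambda}{\Lambda G})=f\lotimes{\Lambda}{\Lambda G}$. Applying $\mathrm{Res}$ and conjugating by the isomorphism $\mathrm{Res}(Y\lotimes{\Lambda}{\Lambda G})\simeq\bigoplus_{g}Y^{g}$ turns this identity into the analogous one for the endomorphism $\psi'$ of $\bigoplus_{g}Y^{g}$ thus obtained, namely $\psi'\circ\bigoplus_{g}f^{g}=\bigoplus_{g}f^{g}$. So it suffices to prove that $\bigoplus_{g\in G}f^{g}$ is left minimal: then $\psi'$, hence $\mathrm{Res}(\psi)$, hence $\psi$ is an isomorphism, which is exactly left minimality of $f\lotimes{\Lambda}{\Lambda G}$.

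Each $f^{g}=f\lotimes{\Lambda}{\Lambda_{g}}$ is left minimal, since $-\lotimes{\Lambda}{\Lambda_{g}}$ is an autoequivalence of $\per\Lambda$ (quasi-inverse $-\lotimes{\Lambda}{\Lambda_{g^{-1}}}$) and a fully faithful autoequivalence preserves left minimality. It then remains to prove the general claim: if $f_{i}\colon X_{i}\to Y_{i}$ $(1\le i\le n)$ are left minimal, so is $\bigoplus_{i}f_{i}$. I would argue by induction on $n$. For $h=(h_{ij})\in\mathrm{End}(\bigoplus_{i}Y_{i})$ with $h\circ\bigoplus_{i}f_{i}=\bigoplus_{i}f_{i}$, the relation unwinds to $h_{ij}f_{j}=\delta_{ij}f_{i}$; in particular $h_{ii}f_{i}=f_{i}$, so every diagonal block $h_{ii}$ is an isomorphism. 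Splitting off the last summand and using that $h_{nn}$ is invertible, the Schur-complement factorization of $h$ reduces invertibility of $h$ to invertibility of $h'':=h'-c\,h_{nn}^{-1}r$ (with $h',c,r$ the remaining blocks); since $c\circ f_{n}=0$ and $r\circ\bigoplus_{i<n}f_{i}=0$, one checks $h''\circ\bigoplus_{i<n}f_{i}=\bigoplus_{i<n}f_{i}$, and the induction hypothesis gives $h''$, hence $h$, an isomorphism.

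The main obstacle is this last sub-lemma; the two properties of $\mathrm{Res}$ and the naturality of \eqref{eq:restriction} are routine, but the Schur-complement bookkeeping (which works in any additive category, with no Krull--Schmidt input) and the precise transport of the hypothesis on $\psi$ across $\mathrm{Res}(f\lotimes{\Lambda}{\Lambda G})\simeq\bigoplus_{g}f^{g}$ need a little care. For the record, $G$-invariance of $X$ and $Y$ is not actually needed for this statement; one could also run the argument inside $\mathrm{End}_{\per\Lambda}(Y)G$ via Proposition~\ref{prop:isomorphism of morphism space}, writing $\psi=\sum_{g}\psi_{g}\otimes g$, extracting $\psi_{e}\circ f=f$ and $\psi_{g}\circ(g\cdot f)=0$ for $g\neq e$, and concluding by the same Schur-complement computation.
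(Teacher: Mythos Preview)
Your argument is correct and is a genuinely different route from the paper's. The paper works inside the skew group algebra description of the morphism spaces: using the isomorphism $\mathrm{End}_{\per{\Lambda G}}(Y\lotimes{\Lambda}{\Lambda G})\simeq\mathrm{End}_{\per\Lambda}(Y)G$ (which is where $G$-invariance enters), it writes $h=\sum_i h_{g_i}\otimes g_i$, expands $h\circ(f\otimes g_0)=f\otimes g_0$ to get $h_{g_0}f=f$ and $h_{g_i}\circ(g_i\cdot f)=0$ for $i\ne 0$, and then runs a bare-hands descending induction to build an explicit right inverse (and likewise a left inverse). This is exactly the alternative you sketch in your final sentence, with the elimination written out rather than packaged as a Schur complement.

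Your primary argument via $\mathrm{Res}$ is cleaner and, as you observe, does not use $G$-invariance at all: restriction reflects isomorphisms, $\mathrm{Res}(f\lotimes{\Lambda}{\Lambda G})\simeq\bigoplus_g f^g$ by naturality of \eqref{eq:restriction}, each $f^g$ is left minimal because $-\lotimes{\Lambda}{\Lambda_g}$ is an autoequivalence, and the Schur-complement induction shows a finite direct sum of left minimal maps is left minimal. What the paper's approach buys is that it stays entirely within the algebraic framework $\mathrm{End}_{\per\Lambda}(Y)G$ already set up for later use; what yours buys is a strictly more general statement with no $G$-invariance hypothesis and no dependence on Proposition~\ref{prop:isomorphism of morphism space}.
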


\medskip

\begin{proof}
    Let $h:Y\lotimes{\Lambda}{\Lambda G}\rightarrow Y\lotimes{\Lambda}{\Lambda G}$ be an endomorphism in $\per{\Lambda G}$ such that $h\circ (f\lotimes{\Lambda}{\Lambda G})=f\lotimes{\Lambda}{\Lambda G}$. We want to prove that $h$ is an isomorphism. In the following, we denote by $g_{0},\ldots,g_{n-1}$ the elements of the group $G$ where $g_{0}=1_{G}$. Thanks to Proposition 2.8 in \cite{AB}, $h$ can be written as the sum of $h_{g_{i}}\otimes g_{i}$ for $i=0,\ldots,n-1$ in $\mathrm{End}_{\per{\Lambda}}(Y)G$, for some endomorphisms $h_{g_{i}}:Y\rightarrow Y$ in $\per{\Lambda}$. Similarly, using Proposition \ref{prop:isomorphism of morphism space}, $f\lotimes{\Lambda}{\Lambda G}$ can be written as $f\otimes g_{0}$ in $\Hom[\per{\Lambda}]{X}{Y}G$ and the equality $h\circ (f\lotimes{\Lambda}{\Lambda G})=f\lotimes{\Lambda}{\Lambda G}$ gives us

    \[f\otimes g_{0}=\left(\sum_{i=0}^{n-1}h_{g_{i}}\otimes g_{i}\right)(f\otimes g_{0})=\sum_{i=0}^{n-1}h_{g_{i}}\circ (g_{i}\, . \,f)\otimes g_{i}\]

    \noindent By identification, we deduce that $h_{g_{0}}f=f$ and $h_{g_{i}}\circ (g_{i}\, .\, f)=0$ for $i=1,\ldots,n-1$. Let us prove that $h$ admits a right inverse i.e. that there exist $t_{g_{0}},\ldots,t_{g_{n-1}}\in\mathrm{End}_{\per{\Lambda}}(Y)$ such that

    \begin{equation} \label{eq:right inverse}
        \id{\Lambda}\otimes g_{0}=\left(\sum_{i=0}^{n-1}h_{g_{i}}\otimes g_{i}\right)\left(\sum_{j=0}^{n-1}t_{g_{j}}\otimes g_{j}\right)=\sum_{i=0}^{n-1}\sum_{j=0}^{n-1}h_{g_{i}g_{j}^{-1}}\circ (g_{i}g_{j}^{-1}\ .\ t_{g_{j}})\otimes g_{i}
    \end{equation}

    \noindent Thanks to the relations obtained by identification in \eqref{eq:right inverse}, one can check by a descending induction that for all $i=1,\ldots,n-1$, $t_{g_{i}}$ can be written as the sum of $h_{g_{l}}'\circ (g_{i}g_{l}^{-1} \ . \ t_{g_{l}})$ for $l=0,\ldots,i-1$ for some $h_{g_{l}}':Y\rightarrow Y$ in $\per{\Lambda}$ satisfying that $h_{g_{l}}'\circ (g_{i}g_{l}^{-1} \ . \ f)=0$. In particular, by identification in \eqref{eq:right inverse}, we obtain

    \[\id{\Lambda}=h_{g_{0}}t_{g_{0}}+\sum_{j=1}^{n-1}h_{g_{j}^{-1}}\circ (g_{j}^{-1} \ . \ t_{g_{j}})=h_{g_{0}}t_{g_{0}}+h_{g_{0}}''t_{g_{0}}\]

    \noindent for some endomorphism $h_{g_{0}}'':Y\rightarrow Y$ in $\per{\Lambda}$ satisfying that $h_{g_{0}}''f=0$, thanks to the previous induction. Since $(h_{g_{0}}+h_{g_{0}}'')\circ f=f$ and $f$ is left minimal, $h_{g_{0}}+h_{g_{0}}''$ is invertible and we can set $t_{g_{0}}$ to be its inverse. Hence, one can use the previous induction to define the $t_{g_{i}}$ for $i=1,\ldots,n-1$ so that \eqref{eq:right inverse} holds. Similarly, one can check that $h$ admits a left inverse which ends the proof. 
\end{proof}

\medskip

\begin{proof}[Proof of Proposition \ref{prop:silting mutation in skew group algebras}]
We assume that $M_{0}$, $M/M_{0}$ and $M_{0}'$ are $G$-invariant. Since the functor $-\lotimes{\Lambda}{\Lambda G}:\per{\Lambda}\rightarrow\per{\Lambda G}$ is triangulated, note that we have

\[(M/M_{0})\lotimes{\Lambda}{\Lambda G}=(M\lotimes{\Lambda}{\Lambda G})/(M_{0}\lotimes{\Lambda}{\Lambda G})\]

\noindent Thanks to Lemma \ref{lem:left minimal morphism in skew group algebra}, it remains to prove that $f\lotimes{\Lambda}{\Lambda G}$ is a left $\mathrm{add}(M_{0}\lotimes{\Lambda}{\Lambda G})$-approximation of $(M/M_{0})\lotimes{\Lambda}{\Lambda G}$. Applying the functor $-\lotimes{\Lambda}{\Lambda G}$ on the triangle induced by $f$ in $\per{\Lambda}$, we obtain the following triangle in $\per{\Lambda G}$

\smallskip

\begin{center}
    \begin{tikzcd}[column sep=1.3cm]
        (M/M_{0})\lotimes{\Lambda}{\Lambda G} \arrow{r}[above]{f\lotimes{\Lambda}{\Lambda G}} &M_{0}'\lotimes{\Lambda}{\Lambda G} \arrow{r} &\mathrm{Cone}(f)\lotimes{\Lambda}{\Lambda G} \arrow{r} &(M/M_{0})\lotimes{\Lambda}{\Lambda G}\, [1]
    \end{tikzcd}
\end{center}

\smallskip

\noindent where $\mathrm{Cone}(f)$ denotes the cone of $f$. Hence, applying $\Hom[\per{\Lambda G}]{-}{M_{0}\lotimes{\Lambda}{\Lambda G}}$ to the previous triangle, it suffices to prove that 

\[H:=\Hom[\per{\Lambda G}]{\mathrm{Cone}(f)\lotimes{\Lambda}{\Lambda G}}{M_{0}\lotimes{\Lambda}{\Lambda G}[1]}=0\]

\noindent For this, note that we have the following isomorphisms of vector spaces

\begin{equation} \notag
    \begin{aligned}
        H&\simeq \Hom[\per{\Lambda}]{\mathrm{Cone}(f)}{\mathrm{Res}(M_{0}\lotimes{\Lambda}{\Lambda G}\, [1])} \\
        &\simeq \Hom[\per{\Lambda}]{\mathrm{Cone}(f)}{\oplus_{g\in G}\, (M_{0})^{g}\, [1]} \\
        &\simeq \Hom[\per{\Lambda}]{\mathrm{Cone}(f)}{\oplus_{g\in G}\, M_{0}[1]}
    \end{aligned}
\end{equation}

\noindent where the second isomorphism comes from \eqref{eq:restriction} and the third arises from the fact that $M_{0}$ is $G$-invariant. Since the left mutation of a silting object is silting by Theorem \ref{thm:AI}, we obtain that $H=0$ which concludes the proof. 
\end{proof}

\medskip

\section{Generalized Kauer moves for Brauer graphs with multiplicity}

\medskip

In this section, we extend the notion of generalized Kauer moves defined for Brauer graphs of multiplicity one in  \cite{Soto} to the case of Brauer graphs with arbitrary multiplicity. We will show that these generalized Kauer moves can be understood in terms of silting mutations. This generalizes Theorem 3.10 in \cite{Soto} for arbitrary multiplicity.

\medskip

\subsection{Brauer graph algebras with multiplicity}

\medskip

Let us recall the notion of Brauer graph algebras with multiplicity. These are finite dimensional algebras defined thanks to a combinatorial data called a Brauer graph. As in \cite{Soto}, we will use the definition of a Brauer graph using combinatorial maps (see for instance \cite{Lazarus}).

\medskip

\begin{defn} \label{def:Brauer graph H}
A \textit{Brauer graph} is the data $\Gamma=(H,\iota,\sigma,m)$ where 

\smallskip

\begin{itemize}[label=\textbullet, font=\tiny]
\item $H$ is the set of half-edges;
\item $\iota$ is a permutation of $H$ without fixed points satisfying $\iota^{2}=\id{H}$ : it is called the \textit{pairing};
\item $\sigma$ is a permutation of $H$ called the \textit{orientation};
\item $m:H\rightarrow \Z_{>0}$ is a map that is constant on a $\sigma$-orbit : it is called the \textit{multiplicity}.
\end{itemize}

\end{defn}

\medskip

To the data $\Gamma=(H,\iota,\sigma,m)$, one can naturally define a graph whose vertex set is $\Gamma_{0}=H/\sigma$, edge set is $\Gamma_{1}=H/\iota$ and source map is the natural projection $s:H\rightarrow H/\sigma$. Moreover, each cycle in the decomposition of $\sigma$ gives rise to a cyclic ordering of the edges around a vertex. Since $m:H\rightarrow\Z_{>0}$ is constant on the $\sigma$-orbits, it induces a map $\widetilde{m}:H/\sigma\rightarrow\Z_{>0}$. Hence, we recover the usual definition of a Brauer graph with multiplicity as defined in \cite{Schroll} for instance. 

\medskip

\begin{rem} \label{rem:excluded Brauer graphs}
    In order to simplify the definition of a Brauer graph algebra, we do not take into consideration the following Brauer graph

    \smallskip
    
    \begin{center}
        \begin{tikzpicture}
            \tikzstyle{vertex}=[circle,draw]
        \node[vertex] (a) at (-1.5,0) {};
        \node[vertex] (b) at (1.5,0) {};
        \draw (a)--(b) node[near start, above, scale=0.9]{$1^{+}$} node[near end, above, scale=0.9]{$1^{-}$};
        \end{tikzpicture}
    \end{center}

    \smallskip

    \noindent which corresponds to $H=\{1^{+}, 1^{-}\}$, $\iota=(1^{+} \ 1^{-})$, $\sigma=\id{H}$ and $m$ being identically 1. More precisely, we assume that such graph does not appear in any connected component of a given Brauer graph. Moreover, the previous graph is not really interesting for our purpose since the generalized Kauer move of the edge 1 is trivial.
    
\end{rem}

\medskip

From now on, we identify $\Gamma=(H,\iota,\sigma,m)$ with its corresponding graph that we have constructed above. By convention, the orientation of a Brauer graph will correspond to the local embedding of each vertex into the counterclockwise oriented plane. 

\medskip

\begin{ex} \label{ex:Brauer graph with multiplicity}
    Let $\Gamma=(H,\iota,\sigma,m)$ be the Brauer graph where the set of half-edges is $H=\{1^{+},1^{-},2^{+},2^{-},3^{+},3^{-},4^{-},4^{+}\}$, the involution $\iota$ is $(1^{+}\, 1^{-})(2^{+}\ 2^{-})(3^{+}\ 3^{-})(4^{+}\ 4^{-})$, the permutation $\sigma$ is $(1^{-} \ 4^{-} \ 3^{-} \ 2^{-})(2^{+} \ 3^{+})$ and the multiplicity $m:H\rightarrow\Z_{>0}$ is given by $m(1^{-})=m(2^{-})=m(3^{-})=m(4^{-})=m(4^{+})=1$ and $m(2^{+})=m(3^{+})=m(1^{+})=2$. With our previous convention, this Brauer graph can be represented as follows

    \smallskip

    \begin{figure}[H]
        \centering
        \begin{tikzpicture}[scale=1.2]
        \tikzstyle{vertex}=[circle,draw, scale=0.6, minimum size=0.7cm]
        \node[vertex] (a) at (-1,0) {2};
        \node[vertex] (b) at (1,0) {1};
        \node[vertex] (c) at (3,1.25) {2};
        \node[vertex] (d) at (3,-1.25) {1};
        \draw[Red] (a)--(b) node[near end, above, scale=0.75]{$1^{-}$} node[near start, above, scale=0.75]{$1^{+}$};
        \draw[VioletRed] (b) to[bend left=45] node[near end, above, scale=0.75]{$2^{+}$} node[near start,left,scale=0.75]{$2^{-}$} (c) ; 
        \draw[Orange] (b) to[bend right=45] node[near start, right, yshift=-2mm,scale=0.75]{$3^{-}$} node[near end, right, yshift=-1mm, scale=0.75]{$3^{+}$} (c) ;
        \draw[Goldenrod] (b)--(d) node[near start, below left, scale=0.75]{$4^{-}$} node[near end, below left, scale=0.75]{$4^{+}$};
        \end{tikzpicture}
    \end{figure}

\smallskip

\noindent where the multiplicity of an half-edge is given by the value in the vertex it is incident to. From now on, we will only write on the graph the multiplicities that are strictly greater than one.
\end{ex}

\medskip

Given a Brauer graph $\Gamma=(H,\iota,\sigma,m)$, one can construct a quiver $Q_{\Gamma}=(Q_{0},Q_{1})$ as in \cite{Schroll}, which is reinterpreted with our definition as follows

\smallskip

\begin{itemize}[label=\textbullet, font=\tiny]
\item The vertex set $Q_{0}$ is the edge set $H/\iota$ of $\Gamma$. In the following, we denote by $[h]$ the edge in $H/\iota$ associated to $h\in H$.
\item The arrow set $Q_{1}$ is induced by the orientation $\sigma$ and the multiplicity $m$ of $\Gamma$. More precisely, for any half-edge $h\in H$ that is not fixed by $\sigma$ , there is an arrow $\alpha_{h}$ from $[h]$ to $[\sigma h]$. Moreover, for any half-edge $h\in H$ that is fixed by $\sigma$, there is an arrow $\alpha_{h}$ from $[h]$ to itself if and only if $m(h)>1$.

\end{itemize}

\medskip

Note that any half-edge $h\in H$ inducing an arrow $\alpha_{h}$ from $[h]$ to $[\sigma h]$ in $Q_{\Gamma}$ gives rise to an oriented cycle $C_{h}$ in $Q_{\Gamma}$ that begins and ends at $[h]$. We call such an oriented cycle a \textit{special $h$-cycle} and it is of the form $C_{h}=\alpha_{\sigma^{n-1}h}\ldots\alpha_{h}$, where $n$ is the size of the $\sigma$-orbit of $h$.

\medskip

\begin{ex} \label{ex:quiver of Brauer graph with multiplicity}
    Let us consider $\Gamma=(H,\iota,\sigma,m)$ the Brauer graph defined in Example \ref{ex:Brauer graph with multiplicity}. Then, its associated quiver $Q_{\Gamma}$ is given by 

    \smallskip

    \begin{center}
        \begin{tikzcd}[column sep=2.5cm, row sep=2.5cm]
            \textcolor{red}{1} \arrow[d, phantom, ""{coordinate, name=1}]\arrow[cramped, "\alpha_{1^{+}}", to path={(\tikzcdmatrixname-1-1.40) arc(-35:260:0.6) [midway, xshift=-4mm, yshift=4mm]\tikztonodes}] \arrow[cramped]{d}[left]{\alpha_{1^{-}}} & \textcolor{VioletRed}{2} \arrow[cramped]{l}[above]{\alpha_{2^{-}}} \arrow[cramped]{d}[description]{\alpha_{2^{+}}} \\
            \textcolor{Goldenrod}{4}\arrow[cramped]{r}[below]{\alpha_{4^{-}}}&\textcolor{Orange}{3} \arrow[cramped,xshift=-3mm]{u}[left]{\alpha_{3^{-}}} \arrow[cramped, xshift=3mm]{u}[right]{\alpha_{3^{+}}}
        \end{tikzcd}
    \end{center}

    \smallskip

    \noindent In this case, the special $h$-cycles are given by $C_{1^{+}}=\alpha_{1^{+}}$,  $C_{1^{-}}=\alpha_{2^{-}}\alpha_{3^{-}}\alpha_{4^{-}}\alpha_{1^{-}}$, $C_{2^{+}}=\alpha_{3^{+}}\alpha_{2^{+}}$,  $C_{2^{-}}=\alpha_{3^{-}}\alpha_{4^{-}}\alpha_{1^{-}}\alpha_{2^{-}}$,  $C_{3^{+}}=\alpha_{2^{+}}\alpha_{3^{+}}$, $C_{3^{-}}=\alpha_{4^{-}}\alpha_{1^{-}}\alpha_{2^{-}}\alpha_{3^{-}}$ and $C_{4^{-}}=\alpha_{1^{-}}\alpha_{2^{-}}\alpha_{3^{-}}\alpha_{4^{-}}$. Note that there is no special $4^{+}$-cycle since $4^{+}$ is fixed by $\sigma$ and is of multiplicity one so it does not induce an arrow in $Q_{\Gamma}$.
\end{ex}

\medskip

\begin{defn} \label{def:Brauer graph algebra}
    Let $\Gamma=(H,\iota,\sigma,m)$ be a Brauer graph and $Q_{\Gamma}$ be its associated quiver. The \textit{Brauer graph algebra} $B_{\Gamma}$ of $\Gamma$ is the path algebra $kQ_{\Gamma}/I_{\Gamma}$ where the ideal of relations $I_{\Gamma}$ is generated by

    \smallskip

    \begin{enumerate}[label=(\Roman*)]
        \item \[(C_{h})^{m(h)}-(C_{\iota h})^{m(\iota h)}\]

        \noindent for any half-edge $h\in H$ such that $h$ and $\iota h$ both induce an arrow in $Q_{\Gamma}$.

        \item \[\alpha_{h}(C_{h})^{m(h)}\]

        \noindent for any half-edge $h\in H$ such that $h$ induces an arrow in $Q_{\Gamma}$.

        \item \[\alpha_{\iota\sigma h}\alpha_{h}\]

        \noindent for any half-edge $h\in H$ such that $h$ and $\iota\sigma h$ both induce an arrow in $Q_{\Gamma}$.
    \end{enumerate}
\end{defn}

\medskip

In general, the relations that generates the ideal of relations $I_{\Gamma}$ are not minimal as we can see in the following example.

\medskip

\begin{ex} \label{ex:Brauer graph algebra with multiplicity}
    Let us consider $\Gamma=(H,\iota,\sigma,m)$ the Brauer graph defined in Example \ref{ex:Brauer graph with multiplicity}. Then, its associated Brauer graph algebra $B_{\Gamma}$ is defined as follows : its quiver is described in Example \ref{ex:quiver of Brauer graph with multiplicity} and its ideal of relations is generated by

    \smallskip

    \begin{enumerate}[label=(\Roman*)]
    \item \label{item:Brauer graph algebra with multiplicity 1}$\alpha_{1^{+}}^{2}-\alpha_{2^{-}}\alpha_{3^{-}}\alpha_{4^{-}}\alpha_{1^{-}}$, $(\alpha_{3^{+}}\alpha_{2^{+}})^{2}-\alpha_{3^{-}}\alpha_{4^{-}}\alpha_{1^{-}}\alpha_{2^{-}}$, $(\alpha_{2^{+}}\alpha_{3^{+}})^{2}-\alpha_{4^{-}}\alpha_{1^{-}}\alpha_{2^{-}}\alpha_{3^{-}}$;
    \item \label{item:Brauer graph algebra with multiplicity 2}$\alpha_{1^{+}}^{3}$, $\alpha_{1^{-}}\alpha_{2^{-}}\alpha_{3^{-}}\alpha_{4^{-}}\alpha_{1^{-}}$, $\alpha_{2^{-}}\alpha_{3^{-}}\alpha_{4^{-}}\alpha_{1^{-}}\alpha_{2^{-}}$,  $\alpha_{3^{-}}\alpha_{4^{-}}\alpha_{1^{-}}\alpha_{2^{-}}\alpha_{3^{-}}$, $\alpha_{4^{-}}\alpha_{1^{-}}\alpha_{2^{-}}\alpha_{3^{-}}\alpha_{4^{-}}$, \newline $\alpha_{2^{+}}(\alpha_{3^{+}}\alpha_{2^{+}})^{2}$, $\alpha_{3^{+}}(\alpha_{2^{+}}\alpha_{3^{+}})^{2}$;
    \item \label{item:Brauer graph algebra with multiplicity 3}$\alpha_{1^{+}}\alpha_{2^{-}}$, $\alpha_{1^{-}}\alpha_{1^{+}}$, $\alpha_{2^{+}}\alpha_{3^{-}}$, $\alpha_{2^{-}}\alpha_{3^{+}}$, $\alpha_{3^{-}}\alpha_{2^{+}}$, $\alpha_{3^{+}}\alpha_{4^{-}}$.
    \end{enumerate}

    \smallskip

    \noindent In particular, the relations \ref{item:Brauer graph algebra with multiplicity 2} can be obtained from the relations \ref{item:Brauer graph algebra with multiplicity 1} and \ref{item:Brauer graph algebra with multiplicity 3}.
\end{ex}

\medskip

\subsection{Covering of Brauer graph algebras with multiplicity}

\medskip

In this subsection, we recall how Brauer graph algebras with multiplicity can be covered by a Brauer graph algebra with multiplicity identically one \cite{Asashiba}. We begin with defining the notion of morphisms of Brauer graphs.

\medskip

\begin{defn} \label{def:morphism of Brauer graph}
    Let $\Gamma=(H,\iota,\sigma,m)$ and $\Gamma'=(H',\iota',\sigma',m')$ be Brauer graphs. A \textit{morphism of Brauer graphs} $p:\Gamma\rightarrow \Gamma'$ is a map $p:H\rightarrow H'$ that commutes with the pairings and the orientations i.e. $p\circ \iota=\iota'\circ p$ and $p\circ\sigma=\sigma'\circ p$.
\end{defn}

\medskip

To define this covering, we will need to introduce a particular grading on the Brauer graph with multiplicity called an \textit{admissible} grading.

\medskip

\begin{defn} \label{def:graded Brauer graphs}
    Let $\Gamma=(H,\iota,\sigma,m)$ be a Brauer graph and $\overline{m}$ be the least common multiple of the $m(h)$ for $h\in H$. 

    \smallskip

    \begin{itemize}[label=\textbullet, font=\tiny]
        \item We say that a $\Z/\overline{m}\, \Z$-grading $d:H\rightarrow \Z/\overline{m}\, \Z$ is \textit{admissible} if 

        \[\sum_{h\in H, s(h)=v}d(h)=\frac{\overline{m}}{\widetilde{m}(v)}\]

        \noindent for all $v\in H/\sigma$, where $s:H\rightarrow H/\sigma$ is the source map of $\Gamma$ and $\widetilde{m}:H/\sigma\rightarrow\Z_{>0}$ is the map induced by $m$ on $H/\sigma$ (cf after Definition \ref{def:Brauer graph H}).

        \item We say that $(\Gamma,d)$ is a \textit{$\Z/\overline{m}\,\Z$-graded Brauer graph} if $d:H\rightarrow \Z/\overline{m}\,\Z$ is an admissible $\Z/\overline{m}\,\Z$-grading.
    \end{itemize}
\end{defn}

\medskip

Let $\Gamma=(H,\iota,\sigma,m)$ be a Brauer graph. Denoting by $\overline{m}$ the least common multiple of the $m(h)$ for $h\in H$, we equip $\Gamma$ with an admissible $\Z/\overline{m}\, \Z$-grading $d:H\rightarrow\Z/\overline{m}\, \Z$. One can construct a new Brauer graph $\Gamma_{d}=(H_{d},\iota_{d},\sigma_{d})$ of multiplicity identically one as follows \cite{Asashiba}

\smallskip

\begin{itemize}[label=\textbullet,font=\tiny]
\item $H_{d}=H\times(\Z/\overline{m}\, \Z)$ : an element of $H_{d}$ will be denoted $h_{i}$ for $h\in H$ and $i\in\Z/\overline{m}\, \Z$;
\item For all $h_{i}\in H_{d}$, $\iota_{d}(h_{i})=(\iota h)_{i}$;
\item For all $h_{i}\in H_{d}$, $\sigma_{d}(h_{i})=(\sigma h)_{i+d(h)}$.
\end{itemize}

\smallskip

\noindent It is clear that the projection $p_{\Gamma}:H_{d}\rightarrow H$ defines a morphism of Brauer graphs from $\Gamma_{d}$ to $\Gamma$. From now on, $\Gamma_{d}$ will be called the \textit{Galois covering} of $\Gamma$. This terminology is motivated by the following result.

\medskip

\begin{prop}[Asashiba {\cite[Proposition 1.17 and Theorem 2.11]{Asashiba}}] \label{prop:Asashiba}
Let $(Q_{\Gamma},I_{\Gamma})$ and $(Q_{d},I_{d})$ the bound quivers corresponding to $\Gamma$ and $\Gamma_{d}$ respectively. Then the morphism of bound quivers $F:(Q_{d},I_{d})\rightarrow(Q_{\Gamma},I_{\Gamma})$ arising from the morphism of Brauer graphs $p:\Gamma_{d}\rightarrow \Gamma$ defined previously is a Galois covering with group $Z/\overline{m}\, \Z$.
    
\end{prop}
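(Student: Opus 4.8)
The plan is to verify directly that $F$ satisfies the defining properties of a Galois covering of bound quivers with group $G:=\Z/\overline{m}\,\Z$, in the sense used by Asashiba. First I would make the $G$-action explicit at the combinatorial level: $G$ acts on $H_{d}=H\times G$ by $j\cdot h_{i}=h_{i+j}$, and since $\iota_{d}(h_{i})=(\iota h)_{i}$ and $\sigma_{d}(h_{i})=(\sigma h)_{i+d(h)}$ only affect the second coordinate by translation (or not at all), this action commutes with $\iota_{d}$ and with $\sigma_{d}$. Hence each $j\in G$ is an automorphism of the Brauer graph $\Gamma_{d}$, and therefore induces an automorphism of the bound quiver $(Q_{d},I_{d})$ sending the arrow $\alpha_{h_{i}}$ to $\alpha_{h_{i+j}}$. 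The action on the vertex set $Q_{d,0}=H_{d}/\iota_{d}$ is free because it is free on $H_{d}$, and the projection $p_{\Gamma}:H_{d}\rightarrow H$ realizes $H_{d}/G\cong H$, so that $\Gamma_{d}/G\cong\Gamma$ and $(Q_{d},I_{d})/G\cong(Q_{\Gamma},I_{\Gamma})$ with $F$ the quotient morphism.

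Next I would check that $F$ is a covering functor, i.e.\ that for every vertex $x\in Q_{d,0}$ the map $F$ restricts to bijections
\[
\{\text{arrows of }Q_{d}\text{ with source }x\}\ \xrightarrow{\ \sim\ }\ \{\text{arrows of }Q_{\Gamma}\text{ with source }F(x)\},
\]
and likewise for targets, together with the corresponding bijection on relations. The arrow condition is immediate from the construction of $Q_{d}$: an arrow of $Q_{d}$ with source $[h_{i}]$ is $\alpha_{h_{i}}$ (present exactly when $h$ is not $\sigma$-fixed, or is $\sigma$-fixed of multiplicity $>1$), with target $[(\sigma h)_{i+d(h)}]$, and these are in bijection, via $F$, with the arrows $\alpha_{h}$ of $Q_{\Gamma}$ with source $[h]$; the target statement is symmetric.

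The real content — and the step I expect to be the main obstacle — is the compatibility with the relations, where admissibility of $d$ is used in an essential way. Starting from $h_{i}$ and applying $\sigma_{d}$ around the whole $\sigma$-orbit of $h$, of size $n$, one returns to the half-edge $h$ but with the index shifted by $\sum_{k=0}^{n-1}d(\sigma^{k}h)=\sum_{s(h')=s(h)}d(h')=\overline{m}/\widetilde{m}(s(h))=\overline{m}/m(h)$. Since $\overline{m}/m(h)$ has order exactly $m(h)$ in $\Z/\overline{m}\,\Z$, the $\sigma_{d}$-orbit of $h_{i}$ has size $n\,m(h)$, so the lift of the special cycle $C_{h}$ based at $[h_{i}]$ ends at $[h_{i+\overline{m}/m(h)}]$, and the lift of $(C_{h})^{m(h)}$ based at $[h_{i}]$ is exactly the special $h_{i}$-cycle of $\Gamma_{d}$ (recall $\Gamma_{d}$ has multiplicity identically one, so its relations of type (I)--(III) read off directly from these lifted cycles). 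From this one checks that each defining relation (I), (II), (III) of $I_{\Gamma}$ lifts, based at any vertex of $Q_{d}$, to a defining relation of $I_{d}$, that each defining relation of $I_{d}$ projects onto one of $I_{\Gamma}$, and that the family of lifts of a given relation is a full free $G$-orbit; since these relations generate the respective ideals, $F$ induces the required bijection on relations. Combining the quotient description, the local bijections on arrows, and this relation compatibility yields that $F$ is a Galois covering with group $\Z/\overline{m}\,\Z$, which is exactly the statement of \cite[Proposition 1.17 and Theorem 2.11]{Asashiba}; in the paper it suffices to cite this, but the verification above is the substance behind it.
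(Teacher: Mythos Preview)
The paper does not give a proof of this proposition at all: it is stated with attribution to Asashiba \cite[Proposition 1.17 and Theorem 2.11]{Asashiba} and used as a black box. Your proposal therefore does not contradict the paper's approach; it simply supplies the verification that the paper elects to cite rather than reproduce.

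Your sketch is sound and hits the essential point, namely that admissibility of $d$ is exactly what makes the $\sigma_{d}$-orbit of $h_{i}$ have size $n\cdot m(h)$, so that the special $h_{i}$-cycle in $\Gamma_{d}$ (with multiplicity one) maps under $F$ onto $(C_{h})^{m(h)}$ in $\Gamma$; this is what forces the relations to match. One small point to tighten: when you write that ``an arrow of $Q_{d}$ with source $[h_{i}]$ is $\alpha_{h_{i}}$'', recall that a vertex $[h_{i}]$ of $Q_{d}$ is an edge of $\Gamma_{d}$, hence corresponds to a pair $\{h_{i},\iota_{d}h_{i}\}$, so the arrows with source $[h_{i}]$ are $\alpha_{h_{i}}$ and $\alpha_{(\iota h)_{i}}$ (when they exist); the bijection with arrows of $Q_{\Gamma}$ out of $[h]$ still holds, but you should phrase the local star condition accordingly.
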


\medskip

\begin{ex}\label{ex:Galois covering of Brauer graph with multiplicity}
    Let $\Gamma=(H,\iota,\sigma,m)$ be the Brauer graph defined in Example \ref{ex:Brauer graph with multiplicity}. We equip $\Gamma$ with an admissible $\Z/2\Z$-grading $d:H\rightarrow\Z/2\Z$ defined by $d(1^{+})=d(3^{+})=1$ and $d(1^{-})=d(2^{-})=d(2^{+})=d(3^{-})=d(4^{-})=d(4^{+})=0$ which can be represented on the graph as follows

    \smallskip

   \begin{figure}[H]
        \centering
        \begin{tikzpicture}[scale=1.2]
        \tikzstyle{vertex}=[circle,draw, scale=0.6, minimum size=0.7cm]
        \node[vertex] (a) at (-1,0) {2};
        \node[vertex] (b) at (1,0) {};
        \node[vertex] (c) at (3,1.25) {2};
        \node[vertex] (d) at (3,-1.25) {};
        \draw[Red] (a)--(b) node[near end, above, scale=0.75, black]{$0$} node[near start, above, scale=0.75,black]{$1$};
        \draw[VioletRed] (b) to[bend left=45] node[near end, above, scale=0.75,black]{$0$} node[near start, above left,scale=0.75,black]{$0$} (c) ; 
        \draw[Orange] (b) to[bend right=45] node[near start, right, yshift=-2mm,scale=0.75,black]{$0$} node[near end, right, yshift=-1mm, scale=0.75,black]{$1$} (c) ;
        \draw[Goldenrod] (b)--(d) node[near start, below left, scale=0.75,black]{$0$} node[near end, below left, scale=0.75,black]{$0$};
        \end{tikzpicture}
    \end{figure}

\smallskip

\noindent Then the Galois covering $\Gamma_{d}=(H_{d},\iota_{d},\sigma_{d})$ of the Brauer graph $\Gamma$ equipped with the admissible $\Z/2\Z$-grading $d:H\rightarrow \Z/2\Z$ is given by

\smallskip

\begin{center}
    \begin{tikzpicture}
        \tikzstyle{vertex}=[circle,draw]
        \node[vertex] (a) at (-2,0) {};
        \node[vertex] (b) at (2,0) {};
        \node[vertex] (c) at (0,1.75) {};
        \node[vertex] (c') at (0,-1.75) {};
        \node[vertex] (d) at (0,-0.5) {};
        \node[vertex] (d') at (0,-4) {};
        \draw[Red] (a)--(c) node[near start, scale=0.75, above, left, yshift=2mm]{$1^{+}_{0}$} node[near end, scale=0.75, above, left, yshift=2mm]{$1^{-}_{0}$};
        \draw[Red] (a)-- (c') node[near start, scale=0.75, below, left, yshift=-2mm]{$1^{+}_{1}$} node[near end, scale=0.75, below, left, yshift=-2mm]{$1^{-}_{1}$};
        \draw[VioletRed] (c) to[bend left=30] node[near start, scale=0.75, above, right, yshift=2mm]{$2^{-}_{0}$} node[near end, scale=0.75, above, right, yshift=2mm]{$2^{+}_{0}$} (b) ;
        \draw[orange] (c) to[bend right=30] node[near start, scale=0.75, above, right, yshift=2mm]{$3^{-}_{0}$} node[near end, scale=0.75, above, right, yshift=2mm]{$3^{+}_{0}$} (b);
        \draw[VioletRed] (c') to[bend left=30]  node[near start, scale=0.75, below, right, yshift=-2mm]{$2^{-}_{1}$} node[near end, scale=0.75, below, right, yshift=-2mm]{$2^{+}_{1}$} (b);
        \draw[orange] (c') to[bend right=30]  node[near start, scale=0.75, below, right, yshift=-2mm]{$3^{-}_{1}$} node[near end, scale=0.75, below, right, yshift=-2mm]{$3^{+}_{1}$} (b);
        \draw[Goldenrod] (c)--(d) node[near start, scale=0.75, left]{$4^{-}_{0}$} node[near end,scale=0.75, left]{$4^{+}_{0}$};
        \draw[Goldenrod] (c')--(d') node[near start, scale=0.75, left]{$4^{-}_{1}$} node[near end,scale=0.75, left]{$4^{+}_{1}$};
        \end{tikzpicture}
\end{center}
\end{ex}

\medskip

In what follows, we will prove that the Brauer graph algebra $B$ associated to $\Gamma$ can be recovered from its Galois covering with the cyclic group $G=(\mathbb{C}_{\overline{m}},.)$ of order $\overline{m}$ thanks to the construction of the quiver and relations of a skew group algebra for a cyclic group \cite[Section 2.3]{RR}. From now on, we assume that $\overline{m}$ is invertible in $k$ and we fix $g$ a generator of the cyclic group $G$. Denoting by $e_{[h_{i}]}$ the idempotent in $B_{d}$ corresponding to the edge $[h_{i}]\in H_{d}/\iota_{d}$, there is a natural action of $G$ on $B_{d}=kQ_{d}/I_{d}$ given by 

\smallskip

\begin{itemize}[label=\textbullet, font=\tiny]
    \item $g\ .\ e_{[h_{i}]}=e_{[h_{i+1}]}$ for all $[h_{i}]\in H_{d}/\iota_{d}$;
    \item $g\ .\ \alpha_{h_{i}}=\alpha_{h_{i+1}}$ where $\alpha_{h_{i}}$ is the arrow in $Q_{d}$ induced by the half-edge $h_{i}\in H_{d}$. 
\end{itemize}

\smallskip

\noindent Similarly, the dual $\widehat{G}$ of $G$ is also a cyclic group. Let $\chi$ be a generator of $\widehat{G}$. Denoting by $e_{[h]}$ the idempotent in $B$ corresponding to the edge $[h]\in H/\iota$, there is a natural action of $\widehat{G}$ on $B=kQ/I$ given by

\smallskip

\begin{itemize}[label=\textbullet, font=\tiny]
    \item $\chi\ .\ e_{[h]}=e_{[h]}$ for all $[h]\in H/\iota$;
    \item $\chi\ .\ \alpha_{h}=\chi(g)^{-d(h)}\alpha_{h}$ where $\alpha_{h}$ is the arrow in $Q$ induced by the half-edge $h\in H$;
\end{itemize}

\smallskip

\noindent Note that the $G$-action on $B_{d}$ arises from a $G$-action on $Q_{d}$ which preserves the ideal of relations $I_{d}$ whereas the $\widehat{G}$-action on $B$ does not come from an action on its quiver.

\medskip

\begin{prop} \label{prop:Galois covering and skew group algebra}
    Let $f$ be the idempotent in $B_{d} \, G$ given by the sum of the $e_{[h_{0}]}\otimes 1_{G}$ for all $[h]\in H/\iota$. Then, the algebra $fB_{d}\, Gf$ has a natural $\widehat{G}$-action and the map

    \smallskip

    \begin{equation*}
        \begin{aligned}
            \phi: &&B &&&\longrightarrow &&fB_{d}\, Gf \\
            &&e_{[h]} &&&\longmapsto &&e_{[h_{0}]}\otimes 1_{G} \\
            &&\alpha_{h} &&&\longmapsto &&\beta_{h}:=\alpha_{h_{-d(h)}}\otimes g^{-d(h)}
        \end{aligned}
    \end{equation*}

    \smallskip

    \noindent is an isomorphism of algebras commuting with the $\widehat{G}$-actions.

\end{prop}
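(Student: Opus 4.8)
The strategy is to first extend $\phi$ to a morphism of $k$-algebras $\widetilde{\phi}\colon kQ_{\Gamma}\to fB_{d}\, Gf$ by the indicated formulas on vertices and arrows. This is legitimate: since $h$ induces an arrow in $Q_{\Gamma}$, every lift $h_{i}$ induces an arrow in $Q_{d}$, so $\beta_{h}=\alpha_{h_{-d(h)}}\otimes g^{-d(h)}$ makes sense, and $\widetilde{\phi}(\alpha_{h})=\beta_{h}$ lies in $(e_{[(\sigma h)_{0}]}\otimes 1_{G})(fB_{d}\, Gf)(e_{[h_{0}]}\otimes 1_{G})$, which matches the source and target of $\alpha_{h}$. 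One then checks that the generators of $I_{\Gamma}$ lie in $\ker\widetilde{\phi}$, so that $\widetilde{\phi}$ factors through $B=kQ_{\Gamma}/I_{\Gamma}$ and yields $\phi\colon B\to fB_{d}\, Gf$; since $f$ singles out exactly one idempotent in each $G$-orbit of vertices of $Q_{d}$, the algebra $fB_{d}\, Gf$ is basic and Morita equivalent to $B_{d}\, G$, and it remains to prove that $\phi$ is bijective and intertwines the $\widehat{G}$-actions.

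The crux is the relations check, which reduces to computing $\widetilde{\phi}$ of a power of a special $h$-cycle. Writing $C_{h}=\alpha_{\sigma^{n-1}h}\cdots\alpha_{h}$ with $n$ the size of the $\sigma$-orbit of $h$, and expanding the product $\beta_{\sigma^{n-1}h}\cdots\beta_{h}$ in $B_{d}\, G$ by means of the rule $(x\otimes g^{a})(y\otimes g^{b})=x\,(g^{a}\cdot y)\otimes g^{a+b}$, one finds $\widetilde{\phi}(C_{h})=P\otimes g^{-e}$ with $e:=\sum_{j=0}^{n-1}d(\sigma^{j}h)$ and $P$ a path of length $n$ in $Q_{d}$; because the $\sigma$-orbit of $h$ is precisely the fibre $s^{-1}(v)$ over the vertex $v=s(h)$, admissibility of $d$ (Definition \ref{def:graded Brauer graphs}) gives $e=\overline{m}/\widetilde{m}(v)=\overline{m}/m(h)$. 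Iterating the same expansion, $\widetilde{\phi}((C_{h})^{m(h)})$ is a path of length $n\,m(h)$ based at $[h_{0}]$, tensored with $g^{-m(h)e}=g^{-\overline{m}}=1_{G}$; since the $\sigma_{d}$-orbit of $h_{0}$ has exactly $n\,m(h)$ elements, that path is one full turn of the special cycle of $Q_{d}$ at $[h_{0}]$, i.e. $\widetilde{\phi}((C_{h})^{m(h)})=C_{h_{0}}\otimes 1_{G}$. The three families of relations of $I_{\Gamma}$ (Definition \ref{def:Brauer graph algebra}) now collapse to relations of $I_{d}$: relation (I) follows from $C_{h_{0}}=C_{\iota_{d}h_{0}}=C_{(\iota h)_{0}}$ in $B_{d}$, the multiplicity of $\Gamma_{d}$ being $1$; for (II), $\widetilde{\phi}(\alpha_{h}(C_{h})^{m(h)})=\alpha_{h_{-d(h)}}C_{h_{-d(h)}}\otimes g^{-d(h)}=0$; and for (III), $\widetilde{\phi}(\alpha_{\iota\sigma h}\alpha_{h})=\alpha_{\iota_{d}\sigma_{d}h'}\alpha_{h'}\otimes g^{-d(h)-d(\iota\sigma h)}=0$ with $h'=h_{-d(h)-d(\iota\sigma h)}$, using the identity $(\iota\sigma h)_{-d(\iota\sigma h)}=\iota_{d}\sigma_{d}(h')$. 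One also records along the way that a half-edge inducing an arrow in $Q_{\Gamma}$ has all of its lifts inducing arrows in $Q_{d}$ (again by admissibility: a $\sigma$-fixed $h$ with $m(h)>1$ has $d(h)=\overline{m}/m(h)\neq 0$).

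To see that $\phi$ is an isomorphism I would argue by dimension together with surjectivity. Expanding $f(B_{d}\, G)f=\bigoplus_{[h],[h']}\bigoplus_{j}e_{[h_{0}]}B_{d}e_{[h'_{j}]}\otimes g^{j}$ and invoking that $F\colon(Q_{d},I_{d})\to(Q_{\Gamma},I_{\Gamma})$ is a Galois covering with group $\Z/\overline{m}\,\Z$ (Proposition \ref{prop:Asashiba}), the defining isomorphisms of the covering give $\bigoplus_{j}e_{[h_{0}]}B_{d}e_{[h'_{j}]}\simeq e_{[h]}Be_{[h']}$ for every pair of edges, whence $\dim_{k}fB_{d}\, Gf=\dim_{k}B$. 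For surjectivity, the computation above runs backwards: a spanning element $q\otimes g^{m}$ of $fB_{d}\, Gf$, with $q$ a path of $Q_{d}$ from $[h'_{m}]$ to $[h''_{0}]$, equals $\widetilde{\phi}(\overline{q})$, where $\overline{q}$ is the path of $Q_{\Gamma}$ obtained by deleting the $\Z/\overline{m}\,\Z$-indices from $q$; indeed $\overline{q}$ is a genuine path of $Q_{\Gamma}$, and re-expanding $\widetilde{\phi}(\overline{q})$ in $B_{d}\, G$ reproduces $q$ in the $B_{d}$-component and $g^{m}$ in the group component, the index $m$ being forced equal to $-\sum d$ over the arrows of $\overline{q}$ since the target index of $q$ is $0$. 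A surjective morphism between algebras of equal finite dimension is an isomorphism.

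Finally, the natural $\widehat{G}$-action on $B_{d}\, G$ recalled after Definition \ref{def:skew group algebra}, namely $\chi\cdot(b\otimes g)=\chi(g)\,b\otimes g$, fixes $f=\sum_{[h]}e_{[h_{0}]}\otimes 1_{G}$ and hence restricts to $fB_{d}\, Gf$; and $\phi$ is $\widehat{G}$-equivariant because $\phi(\chi\cdot\alpha_{h})=\chi(g)^{-d(h)}\beta_{h}=\chi\cdot(\alpha_{h_{-d(h)}}\otimes g^{-d(h)})=\chi\cdot\phi(\alpha_{h})$ while both actions fix the idempotents (so, implicitly, one also records here that the $\widehat{G}$-action on $B$ is well defined --- the relevant check, that relation (I) is fixed, is again the identity $\chi\cdot(C_{h})^{m(h)}=\chi(g)^{-\overline{m}}(C_{h})^{m(h)}=(C_{h})^{m(h)}$ coming from admissibility). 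The main obstacle is the relations computation of the second paragraph: one must track the $\Z/\overline{m}\,\Z$-indices precisely when multiplying out the $\beta_{h}$ in $B_{d}\, G$, and it is exactly admissibility of $d$ that forces the group exponent of $\widetilde{\phi}((C_{h})^{m(h)})$ to vanish and forces its $B_{d}$-part to close up into a single turn of a special cycle of $\Gamma_{d}$.
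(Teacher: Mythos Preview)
Your proof is correct and takes a somewhat different route from the paper's. The paper invokes the Reiten--Riedtmann description \cite[Section 2.3]{RR} of the quiver and relations of a skew group algebra for a cyclic group action to compute an explicit presentation of $fB_{d}\,Gf$ by idempotents $e_{[h_{0}]}\otimes 1_{G}$, arrows $\beta_{h}$, and relations (I$'$)--(III$'$), and then simply matches this presentation against $(Q_{\Gamma},I_{\Gamma})$; bijectivity of $\phi$ is thus obtained for free from the fact that the two presentations coincide. You instead construct the map $\phi$ directly, verify by hand that it annihilates $I_{\Gamma}$ --- your key identity $\widetilde{\phi}((C_{h})^{m(h)})=C_{h_{0}}\otimes 1_{G}$ is precisely the computation underlying the paper's relation (I$'$) --- and then establish bijectivity via surjectivity together with a dimension count that appeals to the Galois covering isomorphisms of Proposition~\ref{prop:Asashiba}. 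So you trade the black box of \cite{RR} for the black box of Asashiba's covering; both are legitimate, and your explicit relations check is essentially the content of the paper's derivation of (I$'$)--(III$'$) from the relations of $B_{d}$. One small advantage of the paper's route is that it immediately identifies the \emph{full} ideal of relations of $fB_{d}\,Gf$ (not just containment), which is slightly more information than you need but makes the isomorphism transparent without a separate dimension argument.
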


\medskip

\begin{proof}
    Since there is a $G$-action on $B_{d}$, one can naturally construct a $\widehat{G}$-action on $B_{d}\, G$ as explained after Definition \ref{def:skew group algebra}. Moreover, if $\chi$ denotes a generator of the cyclic group $\widehat{G}$, note that $\chi \ . \ f=f$. Hence, the $\widehat{G}$-action on $B_{d}\, G$ extends onto a $\widehat{G}$-action on $fB_{d}\, Gf$. Furthermore, using the construction in \cite{RR}, the quiver and relations of $fB_{d}\, G f$ are given as follows

    \smallskip

    \begin{itemize}[label=\textbullet,font=\tiny]
        \item The vertices of the quiver of $fB_{d}\, G f$ are in bijection with the idempotents $e_{[h_{0}]}\otimes 1_{G}$ for all $[h]\in H/\iota$. Moreover, the arrows are given by the $\beta_{h}:=\alpha_{h_{-d(h)}}\otimes g^{-d(h)}$ for all $h\in H$.

        \item The relations of $fB_{d}\, Gf$ are determined thanks to a representative in the $G$-orbit of the relations of $B_{d}$. Hence, its ideal of relations is generated by

        \begin{enumerate}[label=(\Roman*')]
            \item \label{item:Galois covering and skew group algebra 1}\[(\beta_{\sigma^{n-1}h}\ldots\beta_{h})^{m(h)}-(\beta_{\sigma^{n'-1}\iota h}\ldots\beta_{\iota h})^{m(\iota h)}\]

            \noindent for all $h\in H$ such that $h_{-d(h)}$ and $(\iota h)_{-d(\iota h)}$ both induce an arrow in $Q_{d}$, where $n$ and $n'$ are the size of the $\sigma$-orbit of $h$ and $\iota h$ respectively.

            \item \label{item:Galois covering and skew group algebra 2}\[\beta_{h}(\beta_{\sigma^{n-1}h}\ldots\beta_{h})^{m(h)}\]

            \noindent for all $h\in H$ such that $h_{-d(h)}$ induces an arrow in $Q_{d}$, where $n$ is the size of the $\sigma$-orbit of $h$.

            \item \label{item:Galois covering and skew group algebra 3}\[\beta_{\iota\sigma h}\beta_{h}\]

            \noindent for all $h\in H$ such that $h_{-d(h)}$ and $(\iota\sigma h)_{-d(\iota\sigma h)}$ both induce an arrow in $Q_{d}$.
            
        \end{enumerate}
    \end{itemize}

    \smallskip

    \noindent Let us detail how the relations \ref{item:Galois covering and skew group algebra 1} are obtained. For any generator $R$ of $I_{d}$, the ideal of relations of $B_{d}$, there exists a unique relation $R'$ in the $G$-orbit of $R$ which is of the form $R':g^{t}(e_{[h_{0}]})\rightarrow e_{[h'_{0}]}$ for some $0\leq t<\overline{m}$. In this case, the ideal of relations of $fB_{d}\, G f$ is generated by elements of the form $f(R'\otimes g^{t})f$. We recall that the type \ref{item:Brauer graph algebra with multiplicity 1} relation generating $I_{d}$ is of the form

    \[R=\alpha_{\sigma_{d}^{n_{d}-1}h_{i}}\ldots\alpha_{h_{i}}-\alpha_{\sigma_{d}^{n'_{d}-1}\iota_{d}h_{i}}\ldots\alpha_{\iota_{d}h_{i}}\]

    \noindent for all $h_{i}\in H_{d}$ such that $h_{i}$ and $\iota_{d}h_{i}$ both induce an arrow in $Q_{d}$, where $n_{d}$ and $n'_{d}$ denotes the size of the $\sigma_{d}$-orbit of $h_{i}$ and $\iota_{d} h_{i}$ respectively. By construction of $\Gamma_{d}$, $n_{d}=nm(h)$ and $n'_{d}=n'm(\iota h)$ where $n$ and $n'$ denotes the size of the $\sigma$-orbit of $h$ and $\iota h$ respectively. Moreover, since $d:H\rightarrow \Z/\overline{m}\, \Z$ is admissible, the unique representative $R'$ in the $G$-orbit of $R$ of the form $R':g^{t}(e_{[h_{0}]})\rightarrow e_{[h_{0}]}$ for some $0\leq t<\overline{m}$ is 

    \[R'=\alpha_{\sigma_{d}^{nm(h)-1}h_{0}}\ldots\alpha_{h_{0}}-\alpha_{\sigma_{d}^{nm(\iota h)-1}\iota_{d}h_{0}}\ldots\alpha_{\iota_{d}h_{0}}\]

    \noindent where $t=0$ in this case. Using the definition of $\sigma_{d}$, note that

    \[\beta_{\sigma^{n-1}h}\ldots\beta_{h}=\alpha_{\sigma_{d}^{n-1}h_{-d(\sigma^{n-1}h)}}\ldots\alpha_{h_{-\sum_{i=0}^{n-1}d(\sigma^{i}h)}}\otimes g^{-\sum_{i=0}^{n-1}d(\sigma^{i}h)}\]

    \noindent Hence, one can check that $f(R'\otimes 1_{G})f$ is indeed given by the relations \ref{item:Galois covering and skew group algebra 1}. The relations \ref{item:Galois covering and skew group algebra 2} and \ref{item:Galois covering and skew group algebra 3} are obtained similarly from the type \ref{item:Brauer graph algebra with multiplicity 2} and type \ref{item:Brauer graph algebra with multiplicity 3} relation in $B_{d}$ respectively.

    \medskip
    
    It is clear that $\phi$ defines an isomorphism between the quivers of $B$ and $fB_{d}\, G f$ which clearly extends on an isomorphism of algebras between the path algebras of these quivers. Moreover, since $h_{i}$ induces an arrow in $Q_{d}$ for $i\in\Z/\overline{m}\,\Z$ if and only if $h$ induces an arrow in $Q_{\Gamma}$, it is clear that the ideals of relations of $B$ and $fB_{d}\, G f$ coincide via the previous isomorphism of path algebras. Thus, $\phi$ defines an isomorphism of algebras between $B$ and $fB_{d}\, Gf$. Moreover, one can easily check that this isomorphism commutes with the $\widehat{G}$-action on these two algebras. 
\end{proof}

\medskip

\subsection{Generalized Kauer moves}

\medskip

Our goal is to define generalized Kauer moves for Brauer graph with arbitrary multiplicity so that they can be understood in terms of silting mutations as for the multiplicity one case \cite[Theorem 3.10]{Soto}. The idea of the proof is to use the Galois covering defined in the previous subsection which can be constructed whenever the Brauer graph is equipped with an admissible grading. Hence, we need to define a graded version of the generalized Kauer moves for Brauer graph with multiplicity. As in \cite{Soto}, we begin with defining these for successive half-edges called \textit{sectors}.

\medskip

\begin{defn} \label{def:sector}
    Let $\Gamma=(H,\iota,\sigma,m)$ be a Brauer graph and $H'\subset H$ stable under $\iota$.

    \smallskip

    \begin{itemize}[label=\textbullet, font=\tiny]
    \item We say that $(h,r)\in H\times\Z_{\ge0}$ is a \textit{sector} in $\Gamma$ of elements in $H'$ if $r+1$ is the smallest integer $r'\ge 0$ such that $\sigma^{r'}h\notin H'$.
    \item We say that $(h,r)\in H\times\Z_{\ge 0}$ is a \textit{maximal sector} in $\Gamma$ of elements in $H'$ if it is a sector satisfying that $\sigma^{-1}h\notin H'$.
        
    \end{itemize}

    \smallskip

    \noindent We denote respectively by $\mathrm{sect}(H',\sigma)$ and $\mathrm{Sect}(H',\sigma)$ the set of sectors and maximal sectors in $\Gamma$ of elements in $H'$.
\end{defn}

\medskip

From now on, if $m:H\rightarrow \Z_{>0}$ is the multiplicity of a Brauer graph $\Gamma=(H,\iota,\sigma,m)$, we denote by $\overline{m}$ the least common multiple of the $m(h)$ for $h\in H$.

\medskip

\begin{defn} \label{def:graded generalized Kauer moves with multiplicity of a sector}

Let $(\Gamma,d)=(H,\iota,\sigma,m,d)$ be a $\Z/\overline{m}\, \Z$-graded Brauer graph and $H'$ be a subset of $H$ stable under $\iota$. The \textit{$\Z/\overline{m}\, \Z$-graded generalized Kauer move} of a sector $(h,r)\in\mathrm{sect}(H',\sigma)$ in $\Gamma$ gives rise to a $\Z/\overline{m}\, \Z$-Brauer graph $\mu^{+}_{(h,r)}(\Gamma,d)=(H,\iota,\sigma_{(h,r)},m_{(h,r)},d_{(h,r)})$ where

\begin{equation*}
    \begin{aligned}
        \sigma_{(h,r)}=(h \ \sigma^{r+1}h)\sigma(\sigma^{r}h \ \iota\sigma^{r+1}h) \qquad \mbox{and} \qquad 
        m_{(h,r)}:  &&H &&&\rightarrow &&\Z_{>0} \\
        &&\sigma^{i}h &&&\mapsto &&m(\iota\sigma^{r+1}h) \quad \mbox{for $i=0,\ldots,r$} \\
        &&h' &&&\mapsto &&m(h') \quad \mbox{for $h'\neq\sigma^{i}h$}
    \end{aligned}
\end{equation*}

\noindent and where the grading $d_{(h,r)}:H\rightarrow \Z/\overline{m}\, \Z$ is defined by

\begin{equation*}
    \begin{aligned}
        d_{(h,r)}: \ &\iota\sigma^{r+1}h &&\mapsto &&-\sum_{i=0}^{r}d(\sigma^{i}h) \\
        &\sigma^{r}h &&\mapsto &&\left\{\begin{aligned}
            &d(\iota\sigma^{r+1}h)+d(\sigma^{r}h) &&\mbox{if $\iota\sigma^{r+1}h\neq\sigma^{-1}h$} \\
            &\sum_{i=-1}^{r}d(\sigma^{i}h)+d(\sigma^{r}h) &&\mbox{else}
        \end{aligned}\right. \\
        &\sigma^{-1}h &&\mapsto &&\left\{\begin{aligned}
            &\sum_{i=-1}^{r}d(\sigma^{i}h) &&\mbox{if $\iota\sigma^{r+1}h\neq\sigma^{-1}h$} \\
            &-\sum_{i=0}^{r}d(\sigma^{i}h) &&\mbox{else}   
        \end{aligned}\right. \\
        &h' &&\mapsto &&d(h') \qquad \mbox{for $h'\neq\iota\sigma^{r+1}h,\sigma^{r}h,\sigma^{-1}h$}
    \end{aligned}
\end{equation*}

\smallskip

\noindent The underlying Brauer graph $\mu^{+}_{(h,r)}(\Gamma)=(H,\iota,\sigma_{(h,r)},m_{(h,r)})$ can be obtained from $\Gamma$ as follows.

\smallskip

\begin{figure}[H]
    \centering
              
    \begin{tikzpicture}[scale=0.9]
        \tikzstyle{vertex}=[draw, circle, scale=0.6, minimum size=0.8cm]
        \begin{scope}[xshift=-3cm]
        \node[vertex] (1) at (-2,0) {m};
        \node[vertex] (2) at (2,0) {m'};
        \draw (1)--(2) node[near start, above]{$\sigma^{r+1}h$} node[near end, above]{$\iota\sigma^{r+1}h$};
        \draw (1)--(-2.5,-1.75) node[below]{$\sigma^{-1}h$};
        \draw[Orange] (1)--(-1.5,-1.75) node[below, right]{$h$};
        \draw[Orange] (1)--(-1,-1.25) node[below, right]{$\sigma^{j}h$};
        \draw[Orange] (1)--(-0.5,-0.75) node[below, right]{$\sigma^{r}h$};
        \draw (2)--(2.5,-1.75) node[below]{$\sigma\iota\sigma^{r+1}h$};
        \draw[|-|, Orange] (-1.5,-2.2)--(0,-2.2) node[below, midway]{$(h,r)$};
        \draw (0,-3.5) node{$\Gamma=(H,\iota,\sigma,m)$};
        \end{scope} 
        
        \draw[->] (0.5,0)--(3,0);

        \begin{scope}[xshift=6.5cm]
        \node[vertex] (1) at (-2,0) {m};
        \node[vertex] (2) at (2,0) {m'};
        \draw (1)--(2) node[near start, above]{$\sigma^{r+1}h$} node[near end, above]{$\iota\sigma^{r+1}h$};
        \draw (1)--(-2.5,-1.75) node[below]{$\sigma^{-1}h$};
        \draw[Orange] (2)--(1.5,-1.75) node[below left]{$\sigma^{r}h$};
        \draw[Orange] (2)--(1,-1.25) node[below, left]{$\sigma^{j}h$};
        \draw[Orange] (2)--(0.5,-0.75) node[below, left]{$h$};
        \draw (2)--(2.5,-1.75) node[below]{$\sigma\iota\sigma^{r+1}h$};
        \draw (0,-3.5) node{$\mu^{+}_{(h,r)}(\Gamma)=(H,\iota,\sigma_{(h,r)},m_{(h,r)})$};
        \end{scope}
        \end{tikzpicture}
    \caption{Generalized Kauer move of a sector (h,r)}
    \label{Generalized Kauer move of a sector}
\end{figure}
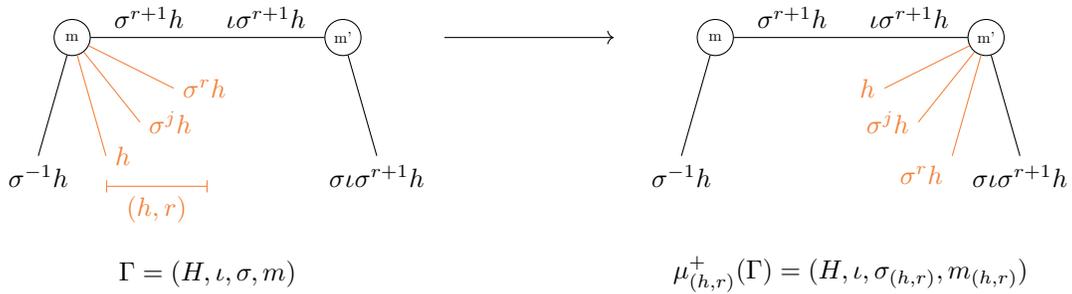
\end{defn}

\medskip

\begin{rem} \label{rem:special case of generalized Kauer move of a sector}
    In the special case where $\iota\sigma^{r+1}h=\sigma^{-1}h$, then the $\Z/\overline{m}\,\Z$-graded generalized Kauer move of the sector $(h,r)$ does not change the underlying Brauer graph which is given as follows

    \smallskip

    \begin{center}
    \begin{tikzpicture}[scale=0.9]
        \tikzstyle{vertex}=[draw, circle, scale=0.6, minimum size=0.8cm]

        \node[vertex] (1) at (-2,0) {m};
        \draw (1.90) arc(12:347:1.25) node[near start, above]{$\sigma^{r+1}h$} node[near end, below]{$\iota\sigma^{r+1}h=\sigma^{-1}h$};
        \draw[Orange] (1)--(-1,-1.25) node[below, right]{$h$};
        \draw[Orange] (1)--(-0.5,0) node[below, right]{$\sigma^{j}h$};
        \draw[Orange] (1)--(-1,1.25) node[below, right]{$\sigma^{r}h$};
        \draw (-2,-2.75) node{$\Gamma=\mu^{+}_{(h,r)}(\Gamma)$};
        \draw (1)--(-3.5,0.7);
        \draw (1)--(-3.75,0);
        \draw (1)--(-3.5,-0.7);
        \end{tikzpicture}
        \end{center}

        \smallskip

        \noindent However, the degree of the $\sigma^{i}h$ will a priori change in the process. In particular, the Galois covering of $\Gamma$ and $\mu^{+}_{(h,r)}(\Gamma)$ could have different orientation, even in this case.
\end{rem}

By construction of the generalized Kauer move described in the previous picture, it is clear that $m_{(h,r)}$ is constant on the $\sigma_{(h,r)}$-orbits. Moreover, $d_{(h,r)}:H\rightarrow\Z/\overline{m}\, \Z$ is admissible. Indeed, one can construct a bijection $\phi:H/\sigma\rightarrow H/\sigma_{(h,r)}$ satisfying

\begin{equation*}
    \begin{aligned}
        &s_{(h,r)}(\sigma^{i}h)=\phi(s(\iota\sigma^{r+1}h)) &&\mbox{for $i=0,\ldots, r$} \\
        &s_{(h,r)}(h')=\phi(s(h')) &&\mbox{for $h'\neq \sigma^{i}h, i=0\ldots,r$}
    \end{aligned}
\end{equation*}

\noindent where $s:H\rightarrow H/\sigma$ and $s_{(h,r)}:H\rightarrow H/\sigma_{(h,r)}$ are the source map of $\Gamma$ and $\mu^{+}_{(h,r)}(\Gamma)$ respectively. Then, one can easily check that, for all vertex $v\in H/\sigma_{(h,r)}$ in $\mu^{+}_{(h,r)}(\Gamma)$, we have

\[\sum_{h'\in H, s_{(h,r)}(h')=v}d_{(h,r)}(h')=\sum_{h'\in H, s(h')=\phi^{-1}(v)}d(h')=\frac{\overline{m}}{\widetilde{m}(\phi^{-1}(v))}\]

\noindent where $\widetilde{m}:H/\sigma\rightarrow \Z_{>0}$ is induced by $m$ (cf after Definition \ref{def:Brauer graph H}). Note that $\overline{m_{(h,r)}}=\overline{m}$. Moreover, one can easily check that $\widetilde{m}(\phi^{-1}(v))=\widetilde{m}_{(h,r)}(v)$ for all $v\in H/\sigma_{(h,r)}$ where $\widetilde{m}_{(h,r)}:H/\sigma_{(h,r)}\rightarrow \Z_{>0}$ is induced by $m_{(h,r)}$. Hence, $d_{(h,r)}:H\rightarrow \Z/\overline{m}\, \Z$ is indeed admissible.

\medskip

\begin{rem}
    By definition, if $\Gamma$ is a Brauer graph of multiplicity identically one then $\mu^{+}_{(h,r)}(\Gamma)$ is also a Brauer graph of multiplicity identically one. Hence, this definition generalizes Definition 2.2 in \cite{Soto}.
\end{rem}

\medskip

Thanks to Lemma 2.4 of \cite{Soto}, we can consider successive $\Z/\overline{m}\, \Z$-graded generalized Kauer moves. We denote by

\[\mu^{+}_{(h_{1},r_{1})(h_{2},r_{2})}(\Gamma,d)=\left(H,\iota,\sigma_{(h_{1},r_{1})(h_{2},r_{2})},m_{(h_{1},r_{1})(h_{2},r_{2})}, d_{(h_{1},r_{1})(h_{2},r_{2})}\right)\]

\noindent the $\Z/\overline{m}\, \Z$-graded Brauer graph defined by $\mu^{+}_{(h_{1},r_{1})}(\mu^{+}_{(h_{2},r_{2})}(\Gamma,d))$.

\medskip

\begin{prop} \label{prop:commutativity of generalized Kauer moves of sectors}
    Let $\Gamma=(H,\iota,\sigma,m)$ be a Brauer graph and $H'$ be a subset of $H$ stable under $\iota$. Let $(h_{1},r_{1})$,$(h_{2},r_{2})$ be two distinct maximal sectors in $\Gamma$ of elements in $H'$. For any admissible $\Z/\overline{m}\, \Z$-grading $d:H\rightarrow\Z/\overline{m}\, \Z$ of $\Gamma$, we have

    \[\mu^{+}_{(h_{1},r_{1})(h_{2},r_{2})}(\Gamma,d)=\mu^{+}_{(h_{2},r_{2})(h_{1},r_{1})}(\Gamma,d)\]

\end{prop}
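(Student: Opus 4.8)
The plan is to prove that the two $\Z/\overline{m}\,\Z$-graded Brauer graphs $\mu^{+}_{(h_{1},r_{1})(h_{2},r_{2})}(\Gamma,d)$ and $\mu^{+}_{(h_{2},r_{2})(h_{1},r_{1})}(\Gamma,d)$ agree in each of their data $\iota$, $\sigma$, $m$, $d$ separately, exploiting that a single $\Z/\overline{m}\,\Z$-graded generalized Kauer move of a sector is a \emph{local} operation. More precisely, reading off Definition \ref{def:graded generalized Kauer moves with multiplicity of a sector}, the move $\mu^{+}_{(h,r)}$ leaves $\iota$ unchanged, alters $\sigma$ only at the three half-edges $\sigma^{-1}h$, $\sigma^{r}h$, $\iota\sigma^{r+1}h$, alters $m$ only on the span $S=\{h,\sigma h,\ldots,\sigma^{r}h\}$ of the sector, and alters $d$ only at $\sigma^{-1}h$, $\sigma^{r}h$, $\iota\sigma^{r+1}h$; moreover the new values there are determined by the old values of $\sigma$ and $d$ on $S$ together with $\sigma^{-1}h$, $\sigma^{r+1}h$, $\iota\sigma^{r+1}h$, and by $m(\iota\sigma^{r+1}h)$. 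I would first record this ``support and dependency'' description precisely, which is essentially a reformulation of the explicit formulas together with the transposition computations $\sigma_{(h,r)}(\sigma^{-1}h)=\sigma^{r+1}h$, $\sigma_{(h,r)}(\iota\sigma^{r+1}h)=h$ and $\sigma_{(h,r)}(\sigma^{r}h)=\sigma\iota\sigma^{r+1}h$.

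The second step is to extract the combinatorial constraints coming from the hypotheses. Since $H'$ is stable under $\iota$ and $(h_{j},r_{j})$ is a maximal sector of $H'$, the half-edges $\sigma^{r_{j}+1}h_{j}$, $\iota\sigma^{r_{j}+1}h_{j}$ and $\sigma^{-1}h_{j}$ all lie outside $H'$, while two distinct maximal sectors of $H'$ have disjoint spans $S_{1}\cap S_{2}=\emptyset$ (a common half-edge would identify the two maximal $H'$-runs) and distinct starting half-edges $h_{1}\neq h_{2}$. As the span of each move and the argument $\iota\sigma^{r_{j}+1}h_{j}$ of its multiplicity lie on opposite sides of the partition $H'\sqcup(H\setminus H')$, one sees immediately that the $m$-parts of the two moves act on disjoint half-edges and that neither move changes $m(\iota\sigma^{r_{3-j}+1}h_{3-j})$, whence $m_{(h_{1},r_{1})(h_{2},r_{2})}=m_{(h_{2},r_{2})(h_{1},r_{1})}$ with no further case analysis. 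The same comparison of supports shows that the only coincidences that can occur between the half-edges modified or read by the first move and those modified or read by the second are the two ``adjacency'' configurations $\sigma^{-1}h_{1}=\iota\sigma^{r_{2}+1}h_{2}$ and $\iota\sigma^{r_{1}+1}h_{1}=\sigma^{-1}h_{2}$ (possibly both at once), together with the intra-sector degeneracies $\iota\sigma^{r_{j}+1}h_{j}=\sigma^{-1}h_{j}$ of Remark \ref{rem:special case of generalized Kauer move of a sector}; any other a priori coincidence, such as $\sigma^{r_{1}+1}h_{1}=\sigma^{r_{2}+1}h_{2}$, is excluded because it would force $\sigma^{r_{1}}h_{1}=\sigma^{r_{2}}h_{2}\in S_{1}\cap S_{2}$, or it would equate a half-edge in $H'$ with one outside $H'$.

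It remains to prove the equality for $\sigma$ and $d$. In the generic case, where none of the above coincidences holds, the two moves modify $\sigma$ (resp.\ $d$) at disjoint triples of half-edges and each move computes its new values from $\sigma$ (resp.\ $d$) only at half-edges untouched by the other, so performing the two surgeries in either order yields the same orientation and grading; for $\sigma$ this is purely a statement about permutations, independent of $m$ and $d$, and may be read off from the multiplicity-one case in \cite{Soto} or checked directly from the transposition formulas. In each of the finitely many exceptional configurations one substitutes the formulas of Definition \ref{def:graded generalized Kauer moves with multiplicity of a sector} into both composites, keeping track of the fact that, e.g., when $\sigma^{-1}h_{1}=\iota\sigma^{r_{2}+1}h_{2}$ the value $d(\sigma^{-1}h_{1})$ entering $d_{(h_{1},r_{1})}$ has already been overwritten by $-\sum_{i=0}^{r_{2}}d(\sigma^{i}h_{2})$ after $\mu^{+}_{(h_{2},r_{2})}$, and that the half-edge playing the role of ``$\sigma^{-1}h_{1}$'' in the graph $\mu^{+}_{(h_{2},r_{2})}(\Gamma)$ is no longer $\sigma^{-1}h_{1}$ itself but $\sigma^{r_{2}}h_{2}$; a short telescoping of the resulting sums then shows the two orders agree. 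I expect this last step to be the main obstacle: the grading formula carries its own internal case split (according to whether $\iota\sigma^{r+1}h$ equals $\sigma^{-1}h$), which has to be combined with the adjacency and degeneracy patterns, and keeping track of which $d$-values have already been altered by the first move — in particular in the two degenerate configurations of Remark \ref{rem:special case of generalized Kauer move of a sector}, where the underlying graph is unchanged but the grading is not — is where the bookkeeping is delicate. Admissibility of the intermediate gradings, established above, is used throughout to guarantee that the sums in the formulas are the intended ones.
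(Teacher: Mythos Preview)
Your approach is correct and, for the parts they have in common, closely parallels the paper's proof: the paper also invokes Proposition~2.5 of \cite{Soto} to conclude that the orientations agree (your ``for $\sigma$ this is purely a statement about permutations \ldots\ and may be read off from the multiplicity-one case in \cite{Soto}''), and then checks the multiplicities directly. Your argument for $m$ is essentially the paper's, phrased more structurally: you observe that the spans $S_{1},S_{2}$ lie in $H'$ while the half-edges $\iota\sigma^{r_{j}+1}h_{j}$ lie outside $H'$, so the two multiplicity modifications are disjoint and neither disturbs the input of the other; the paper instead writes out $m_{(h_{1},r_{1})(h_{2},r_{2})}(\sigma^{i}h_{1})=m(\iota\sigma^{r_{1}+1}h_{1})=m_{(h_{2},r_{2})(h_{1},r_{1})}(\sigma^{i}h_{1})$ explicitly, using that $(h_{1},r_{1})$ is still a maximal sector in $\mu^{+}_{(h_{2},r_{2})}(\Gamma)$ with the same $\sigma^{i}h_{1}$.

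The one genuine difference is the grading. The paper's proof reads ``Thanks to Proposition~2.5 in \cite{Soto}, it only remains to prove that $m_{(h_{1},r_{1})(h_{2},r_{2})}=m_{(h_{2},r_{2})(h_{1},r_{1})}$'' and then stops after $m$; the commutation of $d$ is not written out. You, by contrast, treat $d$ as the heart of the matter and set up the correct case analysis: the generic case where the two triples $\{\sigma^{-1}h_{j},\sigma^{r_{j}}h_{j},\iota\sigma^{r_{j}+1}h_{j}\}$ are disjoint, the adjacency configurations $\sigma^{-1}h_{1}=\iota\sigma^{r_{2}+1}h_{2}$ (and its mirror), and the intra-sector degeneracies of Remark~\ref{rem:special case of generalized Kauer move of a sector}. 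Your observation that in the adjacency case the role of ``$\sigma^{-1}h_{1}$'' in $\mu^{+}_{(h_{2},r_{2})}(\Gamma)$ is played by $\sigma^{r_{2}}h_{2}$ is exactly right, and the telescoping you anticipate does occur. So your outline is more complete than the paper's own proof on this point; the paper evidently regards the $d$-check as a routine computation in the spirit of Proposition~2.5 of \cite{Soto} and suppresses it.
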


\medskip

\begin{proof}
    Thanks to Proposition 2.5 in \cite{Soto}, it only remains to prove that $m_{(h_{1},r_{1})(h_{2},r_{2})}=m_{(h_{2},r_{2})(h_{1},r_{1})}$. It is clear that this equality holds for $h'$ distinct from the $\sigma^{i}h_{1}$ for $i=0,\ldots,r_{1}$ and from the $\sigma^{i}h_{2}$ for $i=0,\ldots,r_{2}$. Since $h_{1}$ and $h_{2}$ have a symmetric role, it suffices to check the equality for the $\sigma^{i}h_{1}$, $i=0,\ldots,r_{1}$. On the one hand, we have

    \[m_{(h_{1},r_{1})(h_{2},r_{2})}(\sigma^{i}h_{1}) =m_{(h_{1},r_{1})(h_{2},r_{2})}(\sigma_{(h_{2},r_{2})}^{i}h_{1})=m_{(h_{2},r_{2})}(\iota\sigma_{(h_{2},r_{2})}^{r_{1}+1}h_{1})=m(\iota\sigma^{r_{1}+1}h_{1})\]

    \noindent On the other hand, since $\sigma^{i}h_{1}$ is distinct from $\sigma^{j}_{(h_{1},r_{1})}h_{2}=\sigma^{j}h_{2}$ for $j=0,\ldots,r_{2}$, we have

    \[m_{(h_{2},r_{2})(h_{1},r_{1})}(\sigma^{i}h_{1})=m_{(h_{1},r_{1})}(\sigma^{i}h_{1})=m(\iota\sigma^{r_{1}+1}h_{1})\] 
\end{proof}

\medskip

\begin{defn} \label{def:graded generalized Kauer move with multiplicity of H'}
    Let $(\Gamma,d)=(H,\iota,\sigma,m,d)$ be a $\Z/\overline{m}\,\Z$-Brauer graph and $H'$ be a subset of $H$ stable under $\iota$. The \textit{$\Z/\overline{m}\,\Z$-graded generalized Kauer move} of $H'$ is the succession of the $\Z/\overline{m}\,\Z$-graded generalized Kauer move of all the maximal sectors $(h,r)\in\mathrm{Sect}(H',\sigma)$ in $\Gamma$. This gives rise to a $\Z/\overline{m}\,\Z$-graded Brauer graph that will be denoted by $\mu^{+}_{H'}(\Gamma,d)=(H,\iota,\sigma_{H'},m_{H'},d_{H'})$.
\end{defn}

\medskip

\begin{ex} \label{ex:generalized Kauer move with multiplicity}
    Let us consider $\Gamma=(H,\iota,\sigma,m,d)$ the $\Z/2\Z$-graded Brauer graph defined in Example \ref{ex:Galois covering of Brauer graph with multiplicity}. Let $H'=\{1^{+}, 1^{-},2^{+}, 2^{-}\}$ be a subset of $H$ stable under $\iota$. Then, the $\Z/2\Z$-graded Brauer graph $\mu^{+}_{H'}(\Gamma,d)=(H,\iota,\sigma_{H'},m_{H'},d_{H'})$ obtained from the $\Z/2\Z$-graded generalized Kauer move of $H'$ is given by

    \smallskip

    \begin{figure}[H]
        \centering
        \begin{tikzpicture}
        \tikzstyle{vertex}=[circle,draw, scale=0.6, minimum size=0.7cm]
        \begin{scope}[xshift=-4cm]
            \node[vertex] (a) at (-1,0) {2};
        \node[vertex] (b) at (1,0) {};
        \node[vertex] (c) at (3,1.25) {2};
        \node[vertex] (d) at (3,-1.25) {};
        \draw[Red] (a)--(b) node[near end, above, scale=0.75]{$1^{-}$} node[near start, above, scale=0.75]{$1^{+}$};
        \draw[VioletRed] (b) to[bend left=45] node[near end, above, scale=0.75]{$2^{+}$} node[near start,left,scale=0.75]{$2^{-}$} (c) ; 
        \draw[Orange] (b) to[bend right=45] node[near start, right, yshift=-2mm,scale=0.75]{$3^{-}$} node[near end, right, yshift=-1mm, scale=0.75]{$3^{+}$} (c) ;
        \draw[Goldenrod] (b)--(d) node[near start, below left, scale=0.75]{$4^{-}$} node[near end, below left, scale=0.75]{$4^{+}$};
        \draw (1,-2.5) node{$\Gamma$};
        \end{scope}
        \draw[->] (-0.25,0)--(3.25,0) node[midway,above, scale=0.9, yshift=1mm]{Generalized Kauer move} node[midway,below, yshift=-1mm,scale=0.9]{of $H'=\{1^{+},1^{-},2^{+},2^{-}\}$};
        \begin{scope}[xshift=5cm]
        \node[vertex] (a) at (-1,0) {2};
        \node[vertex] (b) at (1,0) {};
        \node[vertex] (c) at (3,1.25) {2};
        \node[vertex] (d) at (3,-1.25) {};
        \draw[Red] (a) to[bend right=45] node[near end, below, scale=0.75]{$1^{-}$} node[near start, below left, scale=0.75]{$1^{+}$} (d) ;
        \draw[VioletRed] (b) to[bend right=30] node[near end, below left, scale=0.75]{$2^{-}$} node[near start, below left,scale=0.75]{$2^{+}$} (d) ; 
        \draw[Orange] (b) -- node[near start, above left,scale=0.75]{$3^{-}$} node[near end, above left, scale=0.75]{$3^{+}$} (c) ;
        \draw[Goldenrod] (b) to[bend left=30] node[near start, above right, scale=0.75]{$4^{-}$} node[near end, above right, scale=0.75]{$4^{+}$} (d) ;
        \draw (1,-2.5) node{$\mu^{+}_{H'}(\Gamma)$};
        \end{scope}
        \end{tikzpicture}
    \end{figure}

    \smallskip

    \noindent where the orientation $\sigma_{H'}$ is $(1^{-}\ 4^{+} \ 2^{-})(2^{+}\ 4^{-} \ 3^{-})=(2^{-} \ 4^{-})(2^{+} \ 3^{+})\sigma(1^{-} \ 4^{+})(2^{+} \ 3^{-})$, the multiplicity $m_{H'}:H\rightarrow\Z_{>0}$ is given by $m_{H'}(1^{+})=m_{H'}(3^{+})=2$ and $m_{H'}(1^{-})=m_{H'}(2^{-})=m_{H'}(4^{+})=m_{H'}(2^{+})=m_{H'}(4^{-})=m_{H'}(3^{-})=1$ and the admissible $\Z/2\Z$-grading $d_{H'}:H\rightarrow\Z/2\Z$ is given by $d_{H'}(1^{+})=d_{H'}(3^{+})=1$ and $d_{H'}(1^{-})=d_{H'}(2^{-})=d_{H'}(4^{+})=d_{H'}(2^{+})=d_{H'}(4^{-})=d_{H'}(3^{-})=0$.
\end{ex}

\medskip

Considering a $\Z/\overline{m}\,\Z$-graded generalized Kauer move of a $\Z/\overline{m}\,\Z$-graded Brauer graph $(\Gamma,d)$, one can construct a Galois covering with group $\Z/\overline{m}\,\Z$ for $\mu_{H'}^{+}(\Gamma,d)$ as defined before Proposition \ref{prop:Asashiba}. The following proposition shows a commutativity between constructing this Galois covering and applying a generalized Kauer move.

\medskip

\begin{prop} \label{prop:commutativity Galois covering and generalized Kauer move with multiplicity}
    Let $(\Gamma,d)=(H,\iota,\sigma,m,d)$ be a $\Z/\overline{m}\,\Z$-graded Brauer graph and $H'$ be a subset of $H$ stable under $\iota$. We denote by $\Gamma_{d}=(H_{d},\iota_{d},\sigma_{d})$ the Galois covering of $\Gamma$ constructed before Proposition \ref{prop:Asashiba} and $H'_{d}=H'\times\Z/\overline{m}\,\Z\subset H_{d}$. Then, the Galois covering of $\mu^{+}_{H'}(\Gamma,d)$ is the Brauer graph $\mu^{+}_{H'_{d}}(\Gamma_{d})$.

\end{prop}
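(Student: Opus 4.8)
The plan is to show that the two Brauer graphs $\mu^{+}_{H'}(\Gamma,d)_{d_{H'}}$ and $\mu^{+}_{H'_{d}}(\Gamma_{d})$ coincide by comparing their defining data (set of half-edges, pairing, orientation) edge by edge. Since a Galois covering construction is entirely determined by the underlying $\Z/\overline m\,\Z$-graded Brauer graph, and the graded generalized Kauer move of $H'$ is a finite succession of graded moves of maximal sectors (Definition \ref{def:graded generalized Kauer move with multiplicity of H'}), by Proposition \ref{prop:commutativity of generalized Kauer moves of sectors} the order of these moves is irrelevant; so by induction it suffices to establish the statement for the graded generalized Kauer move of a single maximal sector $(h,r)\in\mathrm{Sect}(H',\sigma)$. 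Concretely, I would prove that the Galois covering of $\mu^{+}_{(h,r)}(\Gamma,d)$ equals $\mu^{+}_{H'_{(h,r)}}(\Gamma_{d})$, where $H'_{(h,r)}\subset H_{d}$ is the set of lifts of the half-edges $\sigma^{i}h$, $i=0,\dots,r$; note that each such lift $(\sigma^{i}h)_{j}$ sits in a sector of $\Gamma_{d}$ of elements of $H'_{(h,r)}$, and these lifted sectors are pairwise distinct maximal sectors, so the right-hand side is again a finite product of single-sector moves in $\Gamma_{d}$.

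The core of the argument is the following identification. On the covering $\Gamma_{d}$, the $\sigma_{d}$-orbit of a lift $h_{0}$ has size $n\,m(h)$ (this is exactly the computation $n_{d}=n\,m(h)$ recalled in the proof of Proposition \ref{prop:Galois covering and skew group algebra}), and running along this orbit one passes successively through the lifts $h_{j}, (\sigma h)_{j+d(h)}, (\sigma^{2}h)_{j+d(h)+d(\sigma h)},\dots$ The key point is that the block of consecutive half-edges $\sigma^{i}h$, $i=0,\dots,r$, of the sector $(h,r)$ downstairs lifts, in each "sheet" of the $\sigma_{d}$-orbit, to a block of consecutive half-edges in $\Gamma_{d}$: namely the lift of $\sigma^{i}h$ appearing just after the lift of $\sigma^{i-1}h$, its degree shift being governed by the partial sums $\sum_{l=0}^{i-1}d(\sigma^{l}h)$. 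I would then check, directly from the formula for $\sigma_{(h,r)}$ in Definition \ref{def:graded generalized Kauer moves with multiplicity of a sector} combined with the covering formula $\sigma_{d}(h_{i})=(\sigma h)_{i+d(h)}$, and from the formula for $d_{(h,r)}$, that
\[
(\sigma_{(h,r)})_{d_{(h,r)}} \;=\; \bigl(\sigma_{d}\bigr)_{H'_{(h,r)}} ,
\]
i.e. that rewiring the sector downstairs and then lifting produces exactly the same new orientation as lifting first and then performing the lifted sector moves upstairs. The pairing is unchanged by either operation and lifts trivially ($\iota_{d}(h_{i})=(\iota h)_{i}$ is preserved since $\iota$ is untouched by the move), and both sides have multiplicity identically $1$, so matching $\sigma$ suffices.

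The main obstacle is precisely this verification that the degree data match: the formula for $d_{(h,r)}$ has several cases (notably the exceptional case $\iota\sigma^{r+1}h=\sigma^{-1}h$ highlighted in Remark \ref{rem:special case of generalized Kauer move of a sector}, where the underlying graph is unchanged but the grading is not), and one must check that in every case the partial-sum shifts $\sum d(\sigma^{i}h)$ that appear when lifting the moved sector upstairs are exactly reproduced by first changing the degrees via $d_{(h,r)}$ and then lifting with the old covering recipe. I expect this to reduce, after bookkeeping, to the admissibility identity $\sum_{s(h')=v}d(h')=\overline m/\widetilde m(v)$ together with the elementary observation that the partial sums of degrees along a sector are what records "which sheet" a given lift lands in; the endpoints $\sigma^{r}h$, $\sigma^{-1}h$, $\iota\sigma^{r+1}h$ of the sector are the only half-edges whose local cyclic neighbourhood (and hence degree) changes, which is why the case analysis in $d_{(h,r)}$ is confined to exactly these three half-edges. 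Once the single-sector case is done, the general statement follows by the induction and the order-independence cited above, and one records that $H'_{d}=H'\times\Z/\overline m\,\Z$ is the disjoint union over the maximal sectors of $H'$ of the corresponding lifted sets $H'_{(h,r)}$, so $\mu^{+}_{H'_{d}}(\Gamma_{d})$ is indeed obtained by performing all these lifted moves.
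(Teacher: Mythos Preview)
Your proposal is correct and follows essentially the same route as the paper: reduce to a single maximal sector via Proposition \ref{prop:commutativity of generalized Kauer moves of sectors}, note that half-edge sets and pairings trivially agree, and then verify directly that $(\sigma_{(h,r)})_{d_{(h,r)}}=(\sigma_{d})_{p_{\Gamma}^{-1}(h,r)}$ by a case analysis concentrated on the half-edges $\sigma_{d}^{-1}h_{i}$, $\sigma_{d}^{r}h_{i}$, $\iota_{d}\sigma_{d}^{r+1}h_{i}$. One small remark: you anticipate needing the admissibility identity $\sum_{s(h')=v}d(h')=\overline{m}/\widetilde{m}(v)$ in the orientation check, but in fact the verification is purely definitional---the formulas for $d_{(h,r)}$ in Definition \ref{def:graded generalized Kauer moves with multiplicity of a sector} are precisely engineered so that the partial sums match, and no appeal to admissibility is required at this stage (admissibility was already used earlier to ensure $d_{(h,r)}$ is itself admissible).
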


\medskip

The previous proposition can be summarized in the following commutative diagram 

\smallskip

\begin{center}
    \begin{tikzcd}[column sep=5cm, row sep=2cm]
        \Gamma_{d} \arrow{r}[above]{\mbox{generalized Kauer}}[below]{\mbox{ move of $H'_{d}$}} \arrow{d}[left]{p_{\Gamma}} &\mu^{+}_{H'_{d}}(\Gamma_{d}) \arrow{d}[right]{p_{\mu^{+}_{H'}(\Gamma)}} \\
        (\Gamma,d) \arrow{r}[above]{\mbox{$\Z/\overline{m}\,\Z$-graded generalized}}[below]{\mbox{Kauer move of $H'$}}&\mu^{+}_{H'}(\Gamma,d)
    \end{tikzcd}
\end{center}

\smallskip

\noindent where $p_{\Gamma}:\Gamma_{d}\rightarrow\Gamma$ and $p_{\mu^{+}_{H'}(\Gamma)}:\mu^{+}_{H'_{d}}(\Gamma_{d})\rightarrow\mu^{+}_{H'}(\Gamma)$ are the morphisms of Brauer graphs induced by the natural projection and $\mu^{+}_{H'}(\Gamma)$ denotes the underlying Brauer graph of $\mu^{+}_{H'}(\Gamma,d)$.

\medskip

\begin{proof}
    It is easy to see that $H'_{d}$ is a subset of $H_{d}$ stable under $\iota_{d}$ and that $(h,r)\in\mathrm{Sect}(H',\sigma)$ is a maximal sector in $\Gamma$ if and only if $(h_{i},r)\in\mathrm{Sect}(H'_{d},\sigma_{d})$ is a maximal sector in $\Gamma_{d}$ for all $i\in\Z/\overline{m}\,\Z$. Thanks to Proposition \ref{prop:commutativity of generalized Kauer moves of sectors}, it suffices to prove that the Galois covering of $\mu^{+}_{(h,r)}(\Gamma,d)$ is given by 
    
    \[\mu^{+}_{p_{\Gamma}^{-1}(h,r)}(\Gamma_{d})=(H_{d},\iota_{d},(\sigma_{d})_{p_{\Gamma}^{-1}(h,r)})\] 
    
    \noindent for any maximal sector $(h,r)\in\mathrm{Sect}(H',\sigma)$, where $p_{\Gamma}^{-1}(h,r)$ is the product of the maximal sectors $(h_{i},r)$ for $i\in\Z/\overline{m}\,\Z$. We denote by
    
     \[\mu^{+}_{(h,r)}(\Gamma)_{d_{(h,r)}}=(H_{d_{(h,r)}},\iota_{d_{(h,r)}},(\sigma_{(h,r)})_{d_{(h,r)}})\]
    
    \noindent the Galois covering of $\mu^{+}_{(h,r)}(\Gamma,d)$. Since $\overline{m_{(h,r)}}=\overline{m}$,  the Brauer graphs $\mu^{+}_{p_{\Gamma}^{-1}(h,r)}(\Gamma_{d})$ and $\mu^{+}_{(h,r)}(\Gamma)_{d_{(h,r)}}$ have the same set of half-edges and the same pairings. It remains to prove the equality of the orientations. By definition, we have

    \smallskip

    \begin{equation*}
        \begin{gathered}
            (\sigma_{d})_{p_{\Gamma}^{-1}(h,r)}=\left(\prod_{i\in\Z/\overline{m}\,\Z} (h_{i} \quad \sigma_{d}^{r+1}h_{i}) \right) \ \sigma_{d} \ \left(\prod_{i\in\Z/\overline{m}\,\Z} (\sigma_{d}^{r}h_{i} \quad \iota_{d}\sigma_{d}^{r+1}h_{i}) \right) \\[0.5cm]
            (\sigma_{(h,r)})_{d_{(h,r)}}=\left[(h \ \ \sigma^{r+1}h)\ \sigma\ (\sigma^{r}h \ \ \iota\sigma^{r+1}h)\right]_{d_{(h,r)}}
        \end{gathered}
    \end{equation*}

    \smallskip

    \noindent It is clear that for any half-edge distinct from the $\sigma_{d}^{-1}h_{i}$, $\iota_{d}\sigma_{d}^{r+1}h_{i}$ and $\sigma_{d}^{r}h_{i}$, $i\in \Z/\overline{m}\, \Z$, the equality holds. We only detail the computations for $\sigma_{d}^{r}h_{i}$, the other cases being similar. Then,

    \smallskip

    \begin{equation*}
        \begin{aligned}
            (\sigma_{d})_{p_{\Gamma}^{-1}(h,r)}(\sigma_{d}^{r}h_{i})
            &=\left\{\begin{aligned}
                &\sigma_{d}^{r+1}h_{j} &&\mbox{if $\sigma_{d}\iota_{d}\sigma_{d}^{r+1}h_{i}=h_{j}$ for some $j\in\Z/\overline{m}\,\Z$} \\
                &\sigma_{d}\iota_{d}\sigma_{d}^{r+1}h_{i} &&\mbox{else}
            \end{aligned} \right. \\
            &=\left\{\begin{aligned}
                &(\sigma^{r+1}h)_{i+2\sum_{k=0}^{r}d(\sigma^{k}h)+d(\iota\sigma^{r+1}h)} &&\mbox{if $\sigma\iota\sigma^{r+1}h=h$} \\
                &(\sigma\iota\sigma^{r+1}h)_{i+\sum_{k=0}^{r}d(\sigma^{k}h)+d(\iota\sigma^{r+1}h)} &&\mbox{else}
            \end{aligned} \right.
        \end{aligned}
    \end{equation*}

    \smallskip

    \noindent since the first condition is equivalent to $\sigma\iota\sigma^{r+1}h=h$ and in this case $j=i+\sum_{k=0}^{r}d(\sigma^{k}h)+d(\iota\sigma^{r+1}h)$. On the other hand, we have

    \smallskip

    \begin{equation*}
        \begin{aligned}
            (\sigma_{(h,r)})_{d_{(h,r)}}(\sigma_{d}^{r}h_{i})
            &=(\sigma_{(h,r)})_{d_{(h,r)}}((\sigma^{r}h)_{i+\sum_{k=0}^{r-1}d(\sigma^{k}h)}) \\
            &=(\sigma_{(h,r)}(\sigma^{r}h))_{i+\sum_{k=0}^{r-1}d(\sigma^{k}h)+d_{(h,r)}(\sigma^{r}h)} \\
            &=\left\{\begin{aligned}
                &(\sigma^{r+1}h)_{i+\sum_{k=0}^{r-1}d(\sigma^{k}h)+\sum_{i=-1}^{r}d(\sigma^{k}h)+d(\sigma^{r}h)} &&\mbox{if $\sigma\iota\sigma^{r+1}h=h$} \\
                &(\sigma\iota\sigma^{r+1}h)_{i+\sum_{k=0}^{r-1}d(\sigma^{k}h)+d(\iota\sigma^{r+1}h)+d(\sigma^{r}h)} &&\mbox{else}
            \end{aligned} \right. \\
            &=(\sigma_{d})_{p_{\Gamma}^{-1}(h,r)}(\sigma_{d}^{r}h_{i})
        \end{aligned}
    \end{equation*}
\end{proof}

\medskip

\subsection{Compatibility with silting mutations}

\medskip

The goal of this part is to prove the following theorem which is an analogous version of Theorem 3.10 in \cite{Soto} for the case of Brauer graphs with arbitrary multiplicity. 

\medskip

\begin{thm} \label{thm:generalized Kauer moves with multiplicity}
    Let $\Gamma=(H,\iota,\sigma,m)$ be a Brauer graph and $H'$ be a subset of $H$ stable under $\iota$. We assume that $\overline{m}$ is invertible in $k$. Denoting by $B$ and $B'$ the Brauer graph algebras associated respectively to $\Gamma$ and $\mu^{+}_{H'}(\Gamma)$, there is an equivalence of triangulated categories

    \smallskip

    \begin{center}
        \begin{tikzcd}[column sep=3cm]
            \per{B'} \arrow{r}[above]{-\lotimes{B'}{\ \mu^{+}(B\, ;\, e_{H''}B)}} &\per{B}
        \end{tikzcd}
    \end{center}

    \smallskip

    \noindent where $e_{H''}B$ is the projective $B$-module corresponding to the edges in $(H\backslash H')/\iota$.
\end{thm}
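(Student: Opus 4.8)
The plan is to deduce the statement from the multiplicity-one case treated in \cite[Theorem 3.10]{Soto}, by passing to the Galois covering of Subsection~2.2 and then descending along the skew group algebra construction with the tools of Section~1 (this is where the hypothesis that $\overline m$ is invertible in $k$ enters).

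First I fix an admissible $\Z/\overline m\,\Z$-grading $d$ on $\Gamma$ (one exists, e.g.\ by placing the weight $\overline m/\widetilde m(v)$ on a single half-edge incident to each vertex $v$ and $0$ on the others) and form the Galois covering $\widetilde\Gamma:=\Gamma_d$, a Brauer graph of multiplicity one with Brauer graph algebra $\widetilde B:=B_d$ acted on by $G:=\mathbb C_{\overline m}$. By Proposition~\ref{prop:Galois covering and skew group algebra} one has $B\cong f\widetilde B Gf$ with $f=\sum_{[h]\in H/\iota}e_{[h_0]}\otimes 1_G$; since $1_{\widetilde B G}=\sum_{j\in\Z/\overline m\,\Z}(1\otimes g^{j})f(1\otimes g^{-j})$ is a sum of idempotents conjugate to sub-idempotents of $f$, the idempotent $f$ is full and $(-)f\colon\per{\widetilde B G}\xrightarrow{\sim}\per B$ is a triangle equivalence. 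A short computation (left multiplication by $1\otimes g^{-i}$) shows that the projective right $\widetilde B G$-modules $(e_{[h_i]}\otimes 1)\widetilde B G$ are pairwise isomorphic for fixed $[h]$, so $\widetilde B G\cong(f\widetilde B G)^{\oplus\overline m}$ as right modules (whence $\widetilde B G\cong M_{\overline m}(B)$ as algebras) and $(-)f$ sends $(e_{[h_i]}\otimes 1)\widetilde B G$ to $e_{[h]}B$. Setting $\widetilde{H'}:=H'\times\Z/\overline m\,\Z$ and $\widetilde{H''}:=(H\setminus H')\times\Z/\overline m\,\Z$, and writing $e_{\widetilde{H''}}$, $e_{\widetilde{H'}}$ for the complementary $G$-stable idempotents of $\widetilde B$ attached to the edges in $\widetilde{H''}/\iota_d$, respectively $\widetilde{H'}/\iota_d$, this gives $(\widetilde B G)f\cong B^{\oplus\overline m}$ and $(e_{\widetilde{H''}}\widetilde B G)f\cong (e_{H''}B)^{\oplus\overline m}$.

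Now \cite[Theorem 3.10]{Soto} applies to $\widetilde\Gamma$, which has multiplicity one: the silting mutation $\widetilde T:=\mu^{+}(\widetilde B\,;\,e_{\widetilde{H''}}\widetilde B)$ is tilting in $\per{\widetilde B}$ and $\mathrm{End}_{\per{\widetilde B}}(\widetilde T)$ is isomorphic to the Brauer graph algebra $\widetilde{B'}$ of $\mu^{+}_{\widetilde{H'}}(\widetilde\Gamma)$; by Proposition~\ref{prop:commutativity Galois covering and generalized Kauer move with multiplicity} this graph is the Galois covering of $\mu^{+}_{H'}(\Gamma,d)$, so Proposition~\ref{prop:Galois covering and skew group algebra} gives $B'\cong f'\widetilde{B'}Gf'$ with $f'$ full, i.e.\ $B'$ is the basic algebra of the Morita class of $\widetilde{B'}G$. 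I then check the $G$-invariance hypotheses of Proposition~\ref{prop:silting mutation in skew group algebras}: $\widetilde B$, $e_{\widetilde{H''}}\widetilde B$ and $e_{\widetilde{H'}}\widetilde B\cong\widetilde B/e_{\widetilde{H''}}\widetilde B$ are $G$-invariant by Example~\ref{ex:G-invariant object}\,\ref{item:G-invariant object 1}; for the left minimal $\mathrm{add}(e_{\widetilde{H''}}\widetilde B)$-approximation $f_{\mathrm{ap}}\colon e_{\widetilde{H'}}\widetilde B\to M_0'$, the subcategory $\mathrm{add}(e_{\widetilde{H''}}\widetilde B)$ is stable under each $(-)^{g}$, so $f_{\mathrm{ap}}^{g^{-1}}\circ(\iota^{e_{\widetilde{H'}}\widetilde B}_g)^{-1}$ is again such an approximation and the uniqueness up to a unique isomorphism of left minimal approximations produces isomorphisms $\iota^{M_0'}_g$ satisfying the cocycle condition of Definition~\ref{def:G-invariant object} and making the square of Example~\ref{ex:G-invariant object}\,\ref{item:G-invariant object 2} commute; hence $M_0'$, then $\mathrm{Cone}(f_{\mathrm{ap}})$, and then $\widetilde T$ are $G$-invariant.

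With these checks in place, Proposition~\ref{prop:silting mutation in skew group algebras} (using $\widetilde B\lotimes{\widetilde B}{\widetilde B G}=\widetilde B G$ and $e_{\widetilde{H''}}\widetilde B\lotimes{\widetilde B}{\widetilde B G}=e_{\widetilde{H''}}\widetilde B G$) yields an isomorphism $\mu^{+}(\widetilde B G\,;\,e_{\widetilde{H''}}\widetilde B G)\simeq\widetilde T\lotimes{\widetilde B}{\widetilde B G}$ in $\per{\widetilde B G}$, which is tilting by Remark~\ref{rem:AB}, with endomorphism algebra $\mathrm{End}_{\per{\widetilde B}}(\widetilde T)\,G\cong\widetilde{B'}G$ by \cite[Theorem 2.10]{AB}. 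Applying the equivalence $(-)f$ and the identifications of the second paragraph, the image of this tilting object is $\mu^{+}(B^{\oplus\overline m}\,;\,(e_{H''}B)^{\oplus\overline m})\cong\mu^{+}(B\,;\,e_{H''}B)^{\oplus\overline m}$; in particular $T:=\mu^{+}(B\,;\,e_{H''}B)$ is silting (Theorem~\ref{thm:AI}) and its $\overline m$-fold direct sum is tilting, so $T$ is tilting in $\per B$, it is basic by \cite{AI}, and $M_{\overline m}(\mathrm{End}_{\per B}(T))\cong\mathrm{End}_{\per{\widetilde B G}}(\mu^{+}(\widetilde B G\,;\,e_{\widetilde{H''}}\widetilde B G))\cong\widetilde{B'}G\cong M_{\overline m}(B')$. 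As $\mathrm{End}_{\per B}(T)$ and $B'$ are basic, $\mathrm{End}_{\per B}(T)\cong B'$, and the standard tilting theorem \cite{Rickard} then realizes $-\lotimes{B'}{T}\colon\per{B'}\to\per B$ as an equivalence of triangulated categories, which is the assertion. The main obstacles I expect are the $G$-invariance of the approximation object $M_0'$ (hence of $\widetilde T$), which requires the uniqueness argument above and care with the cocycle condition, and the Morita bookkeeping along $f$ and $f'$ needed to pin down $\mathrm{End}_{\per B}(T)$ as precisely $B'$ rather than merely a Morita-equivalent algebra.
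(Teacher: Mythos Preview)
Your overall strategy coincides with the paper's: pass to the Galois covering $\Gamma_d$ (multiplicity one), apply \cite[Theorem~3.10]{Soto} there, use Proposition~\ref{prop:silting mutation in skew group algebras} to identify the mutation at the level of $\widetilde B G$, and descend along the Morita equivalence of Proposition~\ref{prop:Galois covering and skew group algebra}. The logic is sound and the conclusion is correct.

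There is, however, a genuine weak point in your $G$-invariance argument for $M_0'$. You invoke ``uniqueness up to a \emph{unique} isomorphism of left minimal approximations'' to produce the isomorphisms $\iota_g^{M_0'}$ and to force the cocycle condition. Left minimal approximations are unique up to isomorphism, but the isomorphism is \emph{not} unique: if $\phi_1,\phi_2\colon M_0'\to M_0'$ both satisfy $\phi_i\circ f_{\mathrm{ap}}=f_{\mathrm{ap}}$, left minimality only tells you $\phi_1^{-1}\phi_2$ is an automorphism, not the identity. So one cannot deduce $\iota_{gh}=\iota_g(\iota_h)^{g^{-1}}$ abstractly from minimality alone. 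The paper circumvents this by computing the approximation explicitly (see the proof of Proposition~\ref{prop:generalized Kauer move and skew group algebra}): $M_0'=\bigoplus_{[h]\in H'/\iota}Y_{[h]}$ with each $Y_{[h]}$ a direct sum of projectives indexed by a full $G$-orbit of edges, so $G$-invariance with the correct cocycle comes directly from Example~\ref{ex:G-invariant object}\,\ref{item:G-invariant object 1}, and the commuting square of Example~\ref{ex:G-invariant object}\,\ref{item:G-invariant object 2} is checked by hand using the explicit form of the $\alpha_{[h_i]}$.

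A second difference worth noting: the paper does not take an arbitrary admissible grading but a carefully chosen one (placing the nonzero degree on $\sigma^{-1}h$ for a fixed maximal sector at each vertex meeting both $H'$ and $H\setminus H'$). This guarantees that the targets of the approximations $\alpha_{[h_0]}\lotimes{B_d}{B_dG}$ are already of the form $e_{[(\cdot)_0]}B_dG$, so that the identification $T\simeq\mu^{+}(B;e_{H''}B)$ is immediate and the matrix-algebra/Morita bookkeeping you carry out (including the appeal to basicness of $\mathrm{End}(T)$) becomes unnecessary. Your route through $M_{\overline m}(-)$ works, but the paper's choice of $d$ is cleaner and avoids having to argue that $\mathrm{End}_{\per B}(T)$ is basic.
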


\medskip

To prove this theorem, we will need the following result. 

\medskip

\begin{prop} \label{prop:generalized Kauer move and skew group algebra}
    Let us consider the setting of Proposition \ref{prop:commutativity Galois covering and generalized Kauer move with multiplicity}. We assume that $\overline{m}$ is invertible in $k$. We denote by $B_{d}$ and $B_{d}'$ the Brauer graph algebras associated to $\Gamma_{d}$ and $\mu^{+}_{H'_{d}}(\Gamma_{d})$ respectively. Moreover, let $G$ be the cyclic group $(\mathbb{C}_{\overline{m}},.)$ of order $\overline{m}$. Then, there is an equivalence of triangulated categories

    \smallskip

    \begin{center}
        \begin{tikzcd}[column sep=4.5cm]
            \per{B_{d}'\, G} \arrow{r}[above]{-\lotimes{B_{d}'\, G}{\ \mu^{+}(B_{d}\, G \, ;\, e_{H''_{d}}B_{d}\, G)}} &\per{B_{d}\, G}
        \end{tikzcd}
    \end{center}

    \smallskip

    \noindent where $e_{H''_{d}}B_{d}$ denotes the projective $B_{d}$-module corresponding to the edges in $(H_{d}\backslash H'_{d})/\iota_{d}$.
\end{prop}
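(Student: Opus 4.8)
The plan is to deduce the statement from the multiplicity-one case \cite[Theorem 3.10]{Soto}, transported along the skew group algebra construction by means of Proposition~\ref{prop:silting mutation in skew group algebras}, Remark~\ref{rem:AB} and \cite[Theorem 2.10]{AB}. Observe first that, by construction, $\Gamma_{d}$ and $\mu^{+}_{H'_{d}}(\Gamma_{d})$ are Brauer graphs of multiplicity identically one, and that $\overline{m}=|G|$ is invertible in $k$, so the results of Section~1 apply to $G$. Since $\Gamma_{d}$ has multiplicity one, \cite[Theorem 3.10]{Soto} gives that
\[T:=\mu^{+}(B_{d}\,;\,e_{H''_{d}}B_{d})\]
is a tilting object of $\per{B_{d}}$ with an algebra isomorphism $\mathrm{End}_{\per{B_{d}}}(T)\simeq B_{d}'$, and that $-\lotimes{B_{d}'}{T}$ is a triangle equivalence $\per{B_{d}'}\to\per{B_{d}}$. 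Write $M=B_{d}$, $M_{0}=e_{H''_{d}}B_{d}$, $M/M_{0}=e_{H'_{d}}B_{d}$ and let $f\colon M/M_{0}\to M_{0}'$ be the left minimal $\mathrm{add}(M_{0})$-approximation, so that $T=M_{0}\oplus\mathrm{Cone}(f)$.

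Next I would check the relevant $G$-invariance. The regular module $B_{d}$ is $G$-invariant by Example~\ref{ex:G-invariant object}(1), and the natural $G$-action permutes the vertices of $Q_{d}$ by $e_{[h_{i}]}\mapsto e_{[h_{i+1}]}$; since $H'_{d}=H'\times\Z/\overline{m}\,\Z$ is a union of $G$-orbits of edges, the idempotents $e_{H'_{d}}$ and $e_{H''_{d}}=1-e_{H'_{d}}$ are $G$-stable, whence $M_{0}$ and $M/M_{0}$ are $G$-invariant, again by Example~\ref{ex:G-invariant object}(1). Since $G$ permutes the indecomposable summands $e_{[h_{i}]}B_{d}$ of $M_{0}$ and of $M/M_{0}$ (which correspond to the edges of $\Gamma_{d}$), and since a left minimal $\mathrm{add}(M_{0})$-approximation of $M/M_{0}=\bigoplus_{[h_{i}]\in H'_{d}/\iota_{d}}e_{[h_{i}]}B_{d}$ is the direct sum of the left minimal $\mathrm{add}(M_{0})$-approximations of the $e_{[h_{i}]}B_{d}$, the approximation $f$ decomposes $G$-equivariantly over the $G$-orbits in $H'_{d}/\iota_{d}$; in particular one can choose isomorphisms $\iota^{M_{0}'}_{g}$ exhibiting $M_{0}'$ as $G$-invariant and making the square of Example~\ref{ex:G-invariant object}(2) for $f$ commute, so that $\mathrm{Cone}(f)$, and hence $T=M_{0}\oplus\mathrm{Cone}(f)$, are $G$-invariant.

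Granting this, all hypotheses of Proposition~\ref{prop:silting mutation in skew group algebras} hold, so there is an isomorphism $\mu^{+}(B_{d}\,G\,;\,e_{H''_{d}}B_{d}\,G)\simeq T\lotimes{B_{d}}{B_{d}\,G}$ in $\per{B_{d}\,G}$, where $e_{H''_{d}}B_{d}\,G$ denotes $(e_{H''_{d}}B_{d})\lotimes{B_{d}}{B_{d}\,G}$. Since $T$ is tilting and $G$-invariant, Remark~\ref{rem:AB} gives that $T\lotimes{B_{d}}{B_{d}\,G}$ is a tilting object of $\per{B_{d}\,G}$, and \cite[Theorem 2.10]{AB} yields $\mathrm{End}_{\per{B_{d}\,G}}(T\lotimes{B_{d}}{B_{d}\,G})\simeq\mathrm{End}_{\per{B_{d}}}(T)\,G$. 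It remains to see that the isomorphism $\mathrm{End}_{\per{B_{d}}}(T)\simeq B_{d}'$ is equivariant for the natural $G$-action on $B_{d}'$ coming from $\mu^{+}_{H'_{d}}(\Gamma_{d})$ being the Galois covering of $\mu^{+}_{H'}(\Gamma,d)$ (Proposition~\ref{prop:commutativity Galois covering and generalized Kauer move with multiplicity}); this holds because both $G$-actions permute the relevant indecomposable summands, respectively idempotents, according to the same permutation of the edges of $\Gamma_{d}$. Hence $\mathrm{End}_{\per{B_{d}\,G}}(\mu^{+}(B_{d}\,G\,;\,e_{H''_{d}}B_{d}\,G))\simeq B_{d}'\,G$, and as this object is tilting, \cite{Rickard} provides the desired triangle equivalence $-\lotimes{B_{d}'\,G}{\,\mu^{+}(B_{d}\,G\,;\,e_{H''_{d}}B_{d}\,G)}\colon\per{B_{d}'\,G}\to\per{B_{d}\,G}$.

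The main obstacle is the $G$-equivariance of the approximation $f$ from the second paragraph, that is, checking that $M_{0}'$ and $\mathrm{Cone}(f)$ are genuinely $G$-invariant in the sense of Definition~\ref{def:G-invariant object}, cocycle condition included. The uniqueness of left minimal approximations and their compatibility with the autoequivalences $(-)^{g}$ produce the required isomorphisms $\iota_{g}^{M_{0}'}$, but verifying the cocycle identity is cleanest using the explicit combinatorial description of $f$, $M_{0}'$ and $\mathrm{Cone}(f)$ in \cite{Soto}, which is manifestly compatible with the layer-permuting $G$-action precisely because $H'_{d}$ is a union of $G$-orbits. The $G$-equivariance of $\mathrm{End}_{\per{B_{d}}}(T)\simeq B_{d}'$ is a second, milder, bookkeeping point.
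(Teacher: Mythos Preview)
Your proposal is correct and follows essentially the same route as the paper: apply \cite[Theorem 3.10]{Soto} to the multiplicity-one cover, verify $G$-invariance of the relevant objects, and transport the equivalence through the skew group construction via Proposition~\ref{prop:silting mutation in skew group algebras} and \cite[Theorem 2.10]{AB}. The paper resolves precisely the obstacle you flag in your last paragraph by writing down the approximation explicitly as $\alpha_{[h_{i}]}=\big(\alpha(h_{i},H'_{d}),\,\alpha(\iota_{d}h_{i},H'_{d})\big)^{t}$ and grouping the cones over $G$-orbits $[h]\in H'/\iota$, after which the cocycle condition of Example~\ref{ex:G-invariant object}\ref{item:G-invariant object 2} is immediate from the layer-shifting action; the $G$-equivariance of $\mathrm{End}(T_{d})\simeq B_{d}'$ is absorbed into the citation of \cite[Theorem 2.10]{AB}.
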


\medskip

\begin{proof}
    By Theorem 3.10 in \cite{Soto}, we know that there is a triangle equivalence

    \smallskip

    \begin{center}
        \begin{tikzcd}[column sep=2cm]
        \per{B'_{d}} \arrow[cramped]{r}[above]{-\lotimes{B_{d}'}{\, T_{d}}} &\per{B_{d}}
        \end{tikzcd}
    \end{center}

    \smallskip

    \noindent where $T_{d}:=\mu^{+}(B_{d}\,;\,e_{H_{d}''}B_{d})$. Let us prove that $T_{d}$ is $G$-invariant. In this case, we obtain the following equivalence of triangulated categories thanks to Theorem 2.10 in \cite{AB} 

    \smallskip

    \begin{center}
        \begin{tikzcd}[column sep=3.5cm]
            \per{B'_{d}\, G} \arrow[cramped]{r}[above]{-\lotimes{B_{d}'\, G}{\ (T_{d}\, \lotimes{B_{d}}{\, B_{d}\, G}})} &\per{B_{d}\, G}
        \end{tikzcd}
    \end{center}

    \smallskip

    \noindent For all $h_{i}\in H'_{d}$, we denote by
    
    \[\alpha(h_{i}, H'_{d}):e_{[h_{i}]}B_{d}\longrightarrow e_{[\sigma_{d}^{r(h_{i})+1}h_{i}]}B_{d}\]
    
    \noindent the morphism induced by the path in $B_{d}$ from $h_{i}$ to $\sigma_{d}^{r(h_{i})+1}h_{i}$ where $r(h_{i})+1=\min\{r\ge 0\, |\, \sigma_{d}^{r}h_{i}\notin H'_{d}\}$. By an abuse of notation, we set $e_{[\sigma_{d}^{r(h_{i})+1}h_{i}]}B_{d}=0$ if the $\sigma_{d}$-orbit of $h_{i}$ is contained in $H_{d}'$. Then, the left minimal $\mathrm{add}(e_{H_{d}''}B_{d})$-approximation of $e_{[h_{i}]}B_{d}$ is given by

    \smallskip

    \begin{center}
       $\alpha_{[h_{i}]}$ : \begin{tikzcd}[column sep=3cm, ampersand replacement=\&] e_{[h_{i}]}B_{d} \arrow[cramped]{r}[above]{\begin{pmatrix} \alpha(h_{i},H'_{d}) \\ \alpha(\iota_{d}h_{i},H'_{d})           
       \end{pmatrix}} \&e_{[\sigma_{d}^{r(h_{i})+1}h_{i}]}B_{d} \,\oplus\, e_{[\sigma_{d}^{r(\iota_{d}h_{i})+1}\iota_{d}h_{i}]}B_{d} 
        \end{tikzcd}
    \end{center}
    
    \smallskip

    \noindent The two paths in $B_{d}$ inducing this approximation can be represented in $\Gamma_{d}$ as follows

    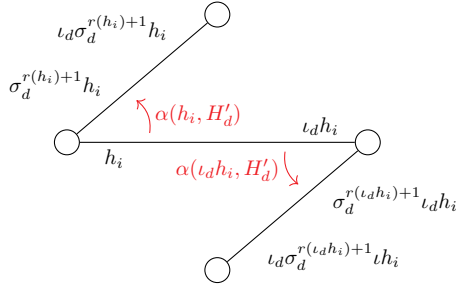
\begin{figure}[H]
        \centering
         \begin{tikzpicture}
        \tikzstyle{vertex}=[draw, circle, minimum size=0.2cm]
        \node[vertex] (1) at (-2,0) {};
        \node[vertex] (2) at (2,0) {};
        \node[vertex] (3) at (0,1.7) {};
        \node[vertex] (4) at (-0,-1.7) {};
        \draw (1)--(2) node[very near start, below, scale=0.8]{$h_{i}$} node[near start, name=A]{} node[very near end, above, scale=0.8]{$\iota_{d} h_{i}$} node[near end, name=C]{};
        \draw (1)--(3) node[near start, above left, scale=0.8]{$\sigma_{d}^{r(h_{i})+1}h_{i}$} node[midway, xshift=-2mm, yshift=-2mm, name=B]{} node[near end, above left, scale=0.8]{$\iota_{d}\sigma_{d}^{r(h_{i})+1}h_{i}$};
        \draw[Red, ->] (A) to[bend right]  node[midway, right, scale=0.8]{$\alpha(h_{i},H'_{d})$} (B);
        \draw (2)--(4) node[near start, below right, scale=0.8]{$\sigma_{d}^{r(\iota_{d} h_{i})+1}\iota_{d} h_{i}$} node[midway, xshift=2mm, yshift=2mm, name=D]{} node[near end, below right, scale=0.8]{$\iota_{d}\sigma_{d}^{r(\iota_{d} h_{i})+1}\iota h_{i}$};
        \draw[Red, ->] (C) to[bend right] node[midway, left, scale=0.8]{$\alpha(\iota_{d}h_{i},H'_{d})$} (D);
        \end{tikzpicture}
        \label{fig:left minimal approximation}
        \caption{Left minimal $\mathrm{add}(e_{H_{d}''}\, B_{d})$-approximation of $e_{[h_{i}]}B_{d}$}
    \end{figure}
    
    \noindent By definition of the left mutation, we have 
    
    \[\mu^{+}(B_{d}\,;\, e_{H_{d}''}B_{d})=e_{H_{d}''}B_{d}\, \oplus \, \bigoplus_{[h_{i}]\in H'_{d}/\iota_{d}}\mathrm{Cone}(\alpha_{[h_{i}]})=e_{H_{d}''}B_{d}\, \oplus \, \bigoplus_{[h]\in H'/\iota}\mathrm{Cone}(\oplus_{i\in \Z/\overline{m}\,\Z}\ \alpha_{[h_{i}]})\]
    
    \noindent Since $G$ acts bijectively on the set of edges $(H_{d}\backslash H'_{d})/\iota_{d}$,  the object $e_{H_{d}''}B_{d}$ is $G$-invariant by Example \ref{ex:G-invariant object} \ref{item:G-invariant object 1} . Moreover, for all $[h]\in H'/\iota$,

    \[\bigoplus_{i\in \Z/\overline{m}\, \Z}\, \alpha_{[h_{i}]}:X_{[h]}:=\bigoplus_{i\in \Z/\overline{m}\,\Z}\, e_{[h_{i}]}B_{d}\longrightarrow Y_{[h]}:=\bigoplus_{i\in \Z/\overline{m}\,\Z}\, (e_{[\sigma_{d}^{r(h_{i})+1}h_{i}]}B_{d} \,\oplus\, e_{[\sigma_{d}^{r(\iota_{d}h_{i})+1}\iota_{d}h_{i}]}B_{d})\]

    \noindent is a morphism in $\Db{B_{d}}$ between two $G$-invariant objects. Indeed, this follows again from Example \ref{ex:G-invariant object} \ref{item:G-invariant object 1} and the isomorphisms $\iota_{g}^{X_{[h]}}$ and $\iota_{g}^{Y_{[h]}}$ (cf Definition \ref{def:G-invariant object}) are respectively given by the action of $g$ on $X_{[h]}$ and $Y_{[h]}$. Moreover, using the definition of $\alpha_{[h_{i}]}$, it is not hard to check that the following diagram commutes for all $g\in G$

    \smallskip

    \begin{center}
        \begin{tikzcd}[row sep=2cm, column sep=2.5cm]
            X_{[h]}^{g^{-1}} \arrow{d}[left]{\iota_{g}^{X_{[h]}}} \arrow{r}[above]{\big(\oplus_{i\in \Z/\overline{m}\,\Z}\ \alpha_{[h_{i}]}\big)^{g^{-1}}} &Y_{[h]}^{g^{-1}} \arrow{d}[right]{\iota_{g}^{Y_{[h]}}} \\
            X_{[h]} \arrow{r}[below]{\oplus_{i\in \Z/\overline{m}\,\Z}\ \alpha_{[h_{i}]}} &Y_{[h]}
        \end{tikzcd}
    \end{center}

    \smallskip

    \noindent By Example \ref{ex:G-invariant object} \ref{item:G-invariant object 2}, we deduce that $\mathrm{Cone}(\oplus_{i\in \Z/\overline{m}\,\Z}\ \alpha_{[h_{i}]})$ is $G$-invariant for all $[h]\in H'/\iota$. Hence, we conclude that $\mu^{+}(B_{d}\,;\, e_{H_{d}''}B_{d})$ is indeed $G$-invariant. 

    \medskip
    
    To complete the proof, it remains to show that 
    
    \[T_{d}G:=\mu^{+}(B_{d}\, G\,;\, e_{H_{d}''}B_{d}\, G)\]
    
    \noindent is isomorphic to $T_{d}\, \lotimes{B_{d}}{B_{d}\, G}$ in $\per{B_{d}\, G}$. For this, it suffices to check that the assumptions of Proposition \ref{prop:silting mutation in skew group algebras} hold. We have seen that $e_{H_{d}''}B_{d}$ is $G$-invariant. One can check with a similar argument that $e_{H'_{d}}B_{d}$, the projective $B_{d}$-module corresponding to the edges in $H'_{d}/\iota_{d}$, is also $G$-invariant. Moreover, the left minimal $\mathrm{add}(e_{H_{d}''}B_{d})$-approximation of $e_{H'_{d}}B_{d}$ is of the form

    \[\bigoplus_{[h]\in H'/\iota}\bigoplus_{i\in \Z/\overline{m}\,\Z} \, \alpha_{[h_{i}]}:e_{H'_{d}}B_{d}=\bigoplus_{[h]\in H'/\iota} \, X_{[h]}\longrightarrow \bigoplus_{[h]\in H'/\iota} \, Y_{[h]}\]

    \noindent We have seen previously that $Y_{[h]}$ is $G$-invariant for all $[h]\in H'/\iota$. Hence $\oplus_{[h]\in H'/\iota} \, Y_{[h]}$ is also $G$-invariant and this concludes the proof. 
\end{proof}

\medskip

\begin{proof}[Proof of Theorem \ref{thm:generalized Kauer moves with multiplicity}]
Let us define an admissible $\Z/\overline{m}\,\Z$-grading $d:H\rightarrow \Z/\overline{m}\, \Z$ on $\Gamma$ as follows : for every vertex $v\in H/\sigma$,

\smallskip

\begin{enumerate}[label=\textbullet, font=\tiny]
\item If there exist half-edges in $H\backslash H'$ and in $H'$ that are incident with $v$ , then we choose any maximal sector $(h,r)\in\mathrm{Sect}(H',\sigma)$ around $v$ and we set $d(\sigma^{-1}h)=\overline{m}/m(\sigma^{-1}h)$ and $d(h')=0$ for all $h'\neq \sigma^{-1}h$ incident with $v$.

\begin{figure}[H]
\centering
        \begin{tikzpicture}[scale=0.8]
        \tikzstyle{vertex}=[draw, circle, scale=0.8, minimum size=0.2cm]
        \node[vertex, scale=0.9] (1) at (-2,0) {$v$};
        \draw (1)--(0,0) node[near end, above]{$\sigma^{r+1}h$};
        \draw[Aquamarine, thick] (1)--(-2.5,-1.75) node[below]{$\sigma^{-1}h$};
        \draw[Orange] (1)--(-1.5,-1.75) node[below, right]{$h$};
        \draw[Orange] (1)--(-1,-1.25) node[below, right]{$\sigma^{j}h$};
        \draw[Orange] (1)--(-0.5,-0.75) node[below, right]{$\sigma^{r}h$};
        \draw[|-|, Orange] (-1.5,-2.2)--(0,-2.2) node[below, midway]{$\in H'$};
        \end{tikzpicture} 
\end{figure}

\noindent The blue half-edge represents the only half-edge in the cyclic ordering around $v$ that has a non trivial degree for $d$.

\item Else, we choose any $h\in H$ incident with $v$ and we set $d(h)=\overline{m}/m(h)$ and $d(h')=0$ for all $h'\neq h$ incident with $v$.

\end{enumerate}

\smallskip

\noindent Let $\Gamma_{d}=(H_{d},\iota_{d},\sigma_{d})$ be the Galois covering of $\Gamma$ constructed before Proposition \ref{prop:Asashiba} and $H'_{d}=H'\times \Z/\overline{m}\,\Z\subset H_{d}$. We denote by $B_{d}$ and $B_{d}'$ the Brauer graph algebras associated to $\Gamma_{d}$ and $\mu^{+}_{H'_{d}}(\Gamma_{d})$ respectively. Moreover, $G$ denotes the cyclic group $(\mathbb{C}_{\overline{m}},.)$ of order $\overline{m}$. We have the following commutative diagram

\smallskip

\begin{center}
    \begin{tikzcd}[column sep=3cm, row sep=2.5cm]
        \per{B_{d}'\, G} \arrow[cramped]{r}[above]{-\lotimes{B_{d}'\, G}{\ T_{d}G}} \arrow[cramped]{d}[left, xshift=-1mm]{-\lotimes{B'_{d}\, G}{\ B'_{d}\, G f}} &\per{B_{d}\, G} \arrow[cramped]{d}[right]{-\lotimes{B_{d}\, G}{\ B_{d}\, G f}} \\
        \per{B'} \arrow[cramped, dashed]{r}[below]{-\lotimes{B'}{\ T}} &\per{B}
    \end{tikzcd}
\end{center}

\smallskip

\noindent where $f$ is the idempotent given by the sum of the $e_{[h_{0}]}\otimes 1_{G}$ for all $[h]\in H/\iota$ and $T_{d}G=\mu^{+}(B_{d}\, G\,;\, e_{H_{d}''}\, B_{d}\, G)$. The vertical arrows arise from Proposition \ref{prop:Galois covering and skew group algebra} since $\mu^{+}_{H'_{d}}(\Gamma_{d})$ is the Galois covering of $\mu^{+}_{H'}(\Gamma,d)$ by Proposition \ref{prop:commutativity Galois covering and generalized Kauer move with multiplicity}. The top arrow is given by Proposition \ref{prop:generalized Kauer move and skew group algebra}. This leads to a derived equivalence between $B'$ and $B$ and the associated tilting object $T$ is given by

\[T=(fB_{d}'\, G\lotimes{B_{d}'\, G}{T_{d}\, G)\lotimes{B_{d}\, G}{B_{d}\, Gf}}\]

\noindent It remains to prove that $T\simeq\mu^{+}(B; e_{H''}B)$. We saw in the proof of Proposition \ref{prop:Galois covering and skew group algebra} that there is an isomorphism in $\per{B{d}\, G}$

\[T_{d}\, G\simeq T_{d}\lotimes{B_{d}}{B_{d}\, G}\]

\noindent where $T_{d}=\mu^{+}(B_{d}\,;\, e_{H_{d}''}B_{d})$. Hence, denoting by $f_{H_{d}''}$ the idempotent given by the sum of the $e_{[h_{0}]}\otimes1_{G}$ for all $[h]\in (H\backslash H')/\iota$, we have

\[fB_{d}'\, G\lotimes{B_{d}'\, G}{T_{d}\, G}\simeq f_{H_{d}''}B_{d}\, G\, \oplus\, \bigoplus_{[h_{0}]\in H'_{d}/\iota}\, \mathrm{Cone}(\alpha_{[h_{0}]}\lotimes{B_{d}}{B_{d}\, G})\] 

\noindent where $\alpha_{[h_{0}]}$ is the left minimal $\mathrm{add}(e_{H_{d}''}B_{d})$-approximation of $e_{[h_{0}]}B_{d}$. By construction of $d$, $\alpha_{[h_{0}]}\lotimes{B_{d}}{B_{d}\, G}$ is of the form

\[\alpha_{[h_{0}]}\lotimes{B_{d}}{B_{d}\, G}:e_{[h_{0}]}B_{d}\, G\longrightarrow e_{[(\sigma^{r(h_{0})+1}h)_{0}]}B_{d}\, G\,\oplus\, e_{[(\sigma^{r(\iota_{d} h_{0})+1}\iota h)_{0}]}B_{d}\, G\]

\noindent Thus, $\alpha_{[h_{0}]}\lotimes{B_{d}}{B_{d}\, G}$ is the left minimal $\mathrm{add}(f_{H_{d}''}B_{d}\, G)$-approximation of $e_{[h_{0}]}B_{d}\, G$. Hence, 

\[fB_{d}'\, G\lotimes{B_{d}'\, G}{T_{d}\, G}\simeq \mu^{+}(fB_{d}\, G\, ;\, f_{H_{d}''}B_{d}\, G)\]

\noindent Moreover, since $-\lotimes{B_{d}\, G}{B_{d}\, Gf}:\per{B_{d}\, G}\rightarrow\per{B_{\Gamma}}$ is an equivalence of triangulated categories, we conclude that 

\[T\simeq \mu^{+}(fB_{d}\, G\, ;\, f_{H_{d}''}B_{d}\, G)\lotimes{B_{d}\, G}{B_{d}\, Gf}\simeq \mu^{+}(fB_{d}\, Gf\,;\,f_{H_{d}''}B_{d}\, G\,f)\]

\noindent which is isomorphic to $\mu^{+}(B\,;\, e_{H''}B)$ by Proposition \ref{prop:Galois covering and skew group algebra}. 
\end{proof}

\medskip

\begin{rem}
    Using the classification of Brauer graph algebras up to derived equivalence given in \cite{OZ}, we can directly prove that the algebras $B$ and $B'$ are derived equivalent. Indeed, we have to check the following conditions

    \smallskip

    \begin{enumerate}[label=(\roman*)]
        \item $\Gamma$ and $\mu^{+}_{H'}(\Gamma)$ have the same number of edges and vertices;
        \item $\Gamma$ and $\mu^{+}_{H'}(\Gamma)$ have the same number of faces and their multi-sets of the perimeters of the faces coincide;
        \item The multi-sets of the multiplicities of the vertices of $\Gamma$ and $\mu^{+}_{H'}(\Gamma)$ coincide;
        \item Either both or none of $\Gamma$ and $\mu^{+}_{H'}(\Gamma)$ are bipartite.
    \end{enumerate}

    \smallskip

    \noindent Thanks to Proposition 1.17 in \cite{Soto}, it only remains to prove the third point. It is clear by the definition of $m_{(h,r)}$ that the multi-sets of the multiplicities of the vertices of $\Gamma$ and $\mu^{+}_{(h,r)}(\Gamma)$ coincide for all $(h,r)$ maximal sector in $\Gamma$ of elements in $H'$. Since $\mu^{+}_{H'}(\Gamma)$ is obtained from the succession of generalized Kauer moves of all maximal sectors of elements in $H'$, we deduce that the third point holds. Hence, the algebras $B$ and $B'$ are derived equivalent.
\end{rem}

\medskip

\section{Generalized Kauer moves for skew Brauer graph algebras}

\medskip

In this section, we define the notion of skew Brauer graph algebras which is a generalization of Brauer graph algebras. Our aim is to define a notion of generalized Kauer moves for skew Brauer graph algebras so that it corresponds to a silting mutation. This will again be a generalization of Theorem 3.10 in \cite{Soto}. Throughout this section, the characteristic of the field $k$ is supposed to be different than 2.

\medskip

\subsection{Skew Brauer graph algebras}

\medskip

In this part, we define the notion of skew Brauer graph which is a Brauer graph where the pairing may have some fixed points. From this combinatorial data, we construct a new algebra called a skew Brauer graph algebra. This construction is analogous to the definition of a Brauer graph algebra.

\medskip

\begin{defn}\label{def:skew Brauer graph}
A \textit{skew Brauer graph} is the data $\Gamma=(H,\iota,\sigma,m)$ where 

\smallskip

\begin{itemize}[label=\textbullet, font=\tiny]
\item $H$ is the set of half-edges;
\item $\iota$ is a permutation of $H$ with possible fixed points satisfying $\iota^{2}=\id{H}$ ;
\item $\sigma$ is a permutation of $H$ called the \textit{orientation};
\item $m:H\rightarrow\Z_{>0}$ is a map that is constant on a $\sigma$-orbit : it is called the \textit{multiplicity}.
\end{itemize}    
\end{defn}

\medskip

To the data $\Gamma=(H,\iota,\sigma,m)$, one can naturally define a graph. For this, let us denote $H_{\circ}$ the set of half-edges that are not fixed by $\iota$ and $H_{\times}$ the set of half-edges that are fixed by $\iota$. Hence, the vertex set of this graph is given by $\Gamma_{0}\simeq H/\sigma\cup H_{\times}$ and the edge set is given by $\Gamma_{1}=H/\iota$. Moreover, each cycle in the decomposition of $\sigma$ gives rise to a cyclic ordering of the half-edges around a vertex in $H/\sigma$. Since $m:H\rightarrow \Z_{>0}$ is constant on a $\sigma$-orbit, it induces a map $\widetilde{m}:H/\sigma\rightarrow \Z_{>0}$. We denote by $\circ$ the vertices in $H/\sigma$ and by $\times$ the vertices in $H_{\times}$. By construction of the graph, each $\times$-vertex has a unique edge incident to it. One can understand an edge incident to a $\times$-vertex as a ``degenerate" edge. Hence, one can see a skew Brauer graph as a Brauer graph with possible ``degenerate" edges. From now on, we identify $\Gamma=(H,\iota,\sigma,m)$ with the graph that we have constructed above. By convention, the orientation of a skew Brauer graph corresponds to the local embedding of each $\circ$-vertex into the counterclockwise oriented plane.

\medskip

\begin{rem}
    For a similar reason than in Remark \ref{rem:excluded Brauer graphs}, we do not take into consideration the two following skew Brauer graph algebras 

    \smallskip

    \begin{center}
        \begin{tikzpicture}
            \tikzstyle{vertex}=[circle,draw,minimum size=0.2cm]
            \begin{scope}[xshift=-3cm]
                \node[vertex] (A) at (-1.5,0) {};
                \node[vertex] (B) at (1.5,0) {};
                \draw (A)--(B) node[near start, above, scale=0.9]{$1^{+}$} node[near end, above, scale=0.9]{$1^{-}$};
             \end{scope}
             \begin{scope}[xshift=3cm]
                \node[vertex] (A) at (-1.5,0) {};
                \draw (A)--(1.5,0) node[scale=1.3]{\textbf{\texttimes}} node[midway, above]{2};
             \end{scope}
        \end{tikzpicture}
    \end{center}

    \smallskip

    \noindent where the first one is given in Remark \ref{rem:excluded Brauer graphs} and the other is given by $H=\{2\}$, $\iota=\id{H}=\sigma$ and $m$ being identically 1. More precisely, we assume that such graphs do not appear in any connected component of a given skew Brauer graph. In particular, since we have excluded such graphs, there is no half-edge that is fixed simultaneously by $\iota$ and $\sigma$.
\end{rem}

\medskip

\begin{ex}\label{ex:skew Brauer graph}
    Let $\Gamma=(H,\iota,\sigma,m)$ be a skew Brauer graph where the set of half-edges $H$ is $\{1^{+},1^{-},2,3, 4^{+},4^{-},5^{+},5^{-}\}$, the involution $\iota$ is $(1^{+}\ 1^{-})(4^{+}\ 4^{-})(5^{+}\ 5^{-})$, the permutation $\sigma$ is $(1^{-}\ 3\ 2)(1^{+}\ 4^{+}\ 5^{+})$ and the multiplicity $m:H\rightarrow\Z_{>0}$ is given by $m(4^{-})=3$, $m(3)=m(2)=m(1^{-})=2$ and $m(4^{+})=m(1^{+})=m(5^{+})=m(5^{-})=1$. With our previous convention, this skew Brauer graph can be represented as follows

    \smallskip

    \begin{center}
        \begin{tikzpicture}[scale=0.9]
            \tikzstyle{vertex}=[circle,draw,scale=0.6,minimum size=0.7cm]
            \node[vertex] (A) at (0,0) {2};
            \node[vertex] (B) at (-3,0) {1};
            \node[vertex] (C) at (-5,1.5) {3};
            \node[vertex] (D) at (-5,-1.5) {1};
            \draw[red] (A)--(B) node[near start, above]{$1^{-}$} node[near end, above]{$1^{+}$};
            \draw[VioletRed] (A)--(2,1.5) node[rotate=45, scale=1.3, black]{\textbf{\texttimes}} node[midway, above left]{$2$};
            \draw[orange] (A)--(2,-1.5) node[rotate=45, scale=1.3, black]{\textbf{\texttimes}} node[midway, below left]{$3$};
            \draw[Goldenrod] (B)--(C) node[near start, above right]{$4^{+}$} node[near end, above right]{$4^{-}$} ;
            \draw[Fuchsia] (B)--(D) node[near start, below right]{$5^{+}$} node[near end, below right]{$5^{-}$};
        \end{tikzpicture}
    \end{center}

    \smallskip

    \noindent where the multiplicity of a half-edge is given by the value in the $\circ$-vertex it is incident to. From now on, we will only write on the graph the multiplicities that are strictly greater than one.
\end{ex}

\medskip

Given a skew Brauer graph $\Gamma=(H,\iota,\sigma,m)$, one can construct a quiver $Q_{\Gamma}=(Q_{0},Q_{1})$ as follows

\smallskip

\begin{itemize}[label=\textbullet,font=\tiny]
 \item The vertex set $Q_{0}$ of $Q_{\Gamma}$ is given by $H_{\circ}/\iota\cup(H_{\times}/\iota\times\Z/2\Z)$. We denote by $[h]_{i}$ a vertex of $Q_{\Gamma}$ where $i=\varnothing$ if $[h]\in H_{\circ}/\iota$ and $i=0,1$ if $[h]\in H_{\times}/\iota$.
 \item The arrow set $Q_{1}$ of $Q_{\Gamma}$ is induced by the orientation $\sigma$ and the multiplicity $m$. More precisely, for any half-edge $h\in H$ that is not fixed by $\sigma$ there are arrows $^{j}\alpha_{h}\phantom{}^{i}$ from $[h]_{i}$ to $[\sigma h]_{j}$ for all $i,j=\varnothing,0,1$. Moreover, for any half-edge $h$ that is fixed by $\sigma$ there is an arrow $\alpha_{h}$ from $[h]$ to itself if and only if $m(h)>1$.
\end{itemize}

\smallskip

\noindent Note that any half-edge $h\in H$ inducing an arrow in $Q_{\Gamma}$ gives rise to oriented cycles in $Q_{\Gamma}$ that begins and ends at $[h]_{i}$ for $i=\varnothing,0,1$. We call such an oriented cycle a \textit{special $h_{i}$-cycle}. By definition, there are $2^{|H_{\times}\cap(\mathrm{Orb}_{\sigma}(h)\backslash\{h\})|}$ such special cycles where $\mathrm{Orb}_{\sigma}(h)$ denotes the $\sigma$-orbit of $h$. We denote by $\mathscr{C}_{h_{i}}$ the set of all special $h_{i}$-cycle. Moreover, an element of $\mathscr{C}_{h_{i}}$ is of the form $\phantom{}^{i}\alpha_{\sigma^{n-1}h}\phantom{}^{i_{n-1}}\ldots \phantom{}^{i_{1}}\alpha_{h}\phantom{}^{i}$ for some $i_{1}\ldots,i_{n-1}=\varnothing,0,1$ where $n$ is the size of the $\sigma$-orbit of $h$.

\medskip

\begin{ex}\label{ex:quiver of a skew Brauer graph}
    Let $\Gamma=(H,\iota,\sigma,m)$ be the skew Brauer graph defined in Example \ref{ex:skew Brauer graph}. Its corresponding quiver is given by

    \smallskip

    \begin{center}
        \begin{tikzcd}[row sep=0.5cm, column sep=1.5cm]
            \textcolor{Goldenrod}{4} \arrow[cramped, "\alpha_{4^{-}}", to path={(\tikztostart.10) arc(-70:245:0.6) [midway, yshift=4mm]\tikztonodes}]\arrow[cramped]{dd}[description]{\alpha_{4^{+}}}&& \textcolor{orange}{3_{0}} \arrow[cramped]{r}[description]{^{0}\alpha_{3}\phantom{}^{0}} \arrow[cramped]{rdd}[near start, left, xshift=-1mm]{^{1}\alpha_{3}\phantom{}^{0}}& \textcolor{VioletRed}{2_{0}} \arrow[cramped, bend right=50]{lld}[midway, above, yshift=1mm]{\alpha_{2}\phantom{}^{0}}\\
            &\textcolor{red}{1} \arrow[cramped]{ul}[description]{\alpha_{1^{+}}} \arrow[cramped]{ru}[description]{^{0}\alpha_{1^{-}}} \arrow[cramped]{rd}[description]{^{1}\alpha_{1^{-}}} && \\
            \textcolor{Fuchsia}{5} \arrow[cramped]{ur}[description]{\alpha_{5}^{+}}&& \textcolor{orange}{3_{1}} \arrow[cramped]{r}[description]{^{1}\alpha_{3}\phantom{}^{1}} \arrow[cramped, crossing over]{uur}[near start, left, xshift=-1mm]{^{0}\alpha_{3}\phantom{}^{1}} & \textcolor{VioletRed}{2_{1}} \arrow[cramped, bend left=50]{llu}[below, yshift=-1mm]{\alpha_{2}\phantom{}^{1}}
        \end{tikzcd}
    \end{center}

    \smallskip

    \noindent In this case, the sets of special $h_{i}$-cycles are given by $\mathscr{C}_{1^{-}}=\{\alpha_{2}{}^{i_{2}}\ ^{i_{2}}\alpha_{3}{}^{i_{1}} \ ^{i_{1}}\alpha_{1^{-}}\,|\, i_{1},i_{2}=0,1\}$,  $\mathscr{C}_{2_{i}}=\{\phantom{}^{i}\alpha_{3}\phantom{}^{i_{1}}\ ^{i_{1}}\alpha_{1^{-}}\ \alpha_{2}{}^{i}\,|\,i_{1}=0,1\}$ for $i=0,1$, $\mathscr{C}_{3_{i}}=\{{}^{i}\alpha_{1^{-}}{} \ \alpha_{2}{}^{i_{1}}\ ^{i_{1}}\alpha_{3}\phantom{}^{i}\,|\,i_{1}=0,1\}$ for $i=0,1$, $\mathscr{C}_{1^{+}}=\{\alpha_{5^{+}}\alpha_{4^{+}}\alpha_{1^{+}}\}$, $\mathscr{C}_{4^{+}}=\{\alpha_{1^{+}}\alpha_{5^{+}}\alpha_{4^{+}}\}$, $\mathscr{C}_{4^{-}}=\{\alpha_{4^{-}}\}$ and $\mathscr{C}_{5^{+}}=\{\alpha_{4^{+}}\alpha_{1^{+}}\alpha_{5^{+}}\}$. Note that there is no special $5^{-}$-cycle since $5^{-}$ is fixed by $\sigma$ and is of multiplicity 1 so it does not induce an arrow in $Q_{\Gamma}$. 

\end{ex}

\medskip

\begin{defn}\label{def:skew Brauer graph algebra}
Let $\Gamma=(H,\iota,\sigma,m)$ be a skew Brauer graph and $Q_{\Gamma}$ be its associated quiver. The \textit{skew Brauer graph algebra} $B_{\Gamma}$ of $\Gamma$ is the path algebra $kQ_{\Gamma}/I_{\Gamma}$ where the set of relations $I_{\Gamma}$ is generated by

\begin{enumerate}[label=(\Roman*)]
\item \label{item:skew Brauer graph algebra 1} \[(2^{n_{\times}(h)}C_{h})^{m(h)}=(2^{n_{\times}(\iota h)}C_{\iota h})^{m(\iota h)}\]

\noindent for all special cycles $C_{h}\in\mathscr{C}_{h}$ and $C_{\iota h}\in\mathscr{C}_{\iota h}$ where $n_{\times}(h)$ denotes the number of half-edges in the $\sigma$-orbit of $h$ that are in $H_{\times}$.

\item \label{item:skew Brauer graph algebra 2} \[^{i_{1}}\alpha_{h}\phantom{}^{i}\ (C_{h_{i}})^{m(h)}\]

\noindent for all special cycles $C_{h_{i}}=\phantom{}^{i}\alpha_{\sigma^{n-1}h}\phantom{}^{i_{n-1}}\ldots \phantom{}^{i_{1}}\alpha_{h}\phantom{}^{i}\in\mathscr{C}_{h_{i}}$.

\item \label{item:skew Brauer graph algebra 3}\[^{i'}\alpha_{\iota\sigma h}\ \alpha_{h}\phantom{}^{i} \ \]

\noindent for all $h\in H$ such that $h$ and $\iota\sigma h$ both induce an arrow with $\sigma h\in H_{\circ}$ and $i,i'=\varnothing,0,1$.

\item \label{item:skew Brauer graph algebra 4} \[\phantom{}^{i+1}\alpha_{\sigma^{n-1}h}\phantom{}^{i_{n-1}}\ldots \phantom{}^{i_{1}}\alpha_{h}\phantom{}^{i}(C_{h_{i}})^{m(h)-1}\]

\noindent for all special cycles $C_{h_{i}}=\phantom{}^{i}\alpha_{\sigma^{n-1}h}\phantom{}^{i_{n-1}}\ldots \phantom{}^{i_{1}}\alpha_{h}\phantom{}^{i}\in\mathscr{C}_{h_{i}}$ with $h\in H_{\times}$.

\item \label{item:skew Brauer graph algebra 5} \[^{i'}\alpha_{\sigma h}\phantom{}^{0}\ ^{0}\alpha_{h}\phantom{}^{i}-\phantom{}^{i'}\alpha_{\sigma h}\phantom{}^{1}\ ^{1}\alpha_{h}\phantom{}^{i}\]

\noindent for all $h\in H$ not fixed by $\sigma$ such that $\sigma h\in H_{\times}$ and $i,i'=\varnothing,0,1$.

\end{enumerate}
    
\end{defn}

\medskip

In particular, for $H_{\times}=\varnothing$, the definitions of skew Brauer graph and skew Brauer graph algebra coincide with the definitions of Brauer graph and  Brauer graph algebra. Thanks to the relation \ref{item:skew Brauer graph algebra 5}, note that all the special $h_{i}$-cycles are equal in $B_{\Gamma}$ . Moreover, these relations are not minimal as we can see in the following example.

\medskip

\begin{ex} \label{ex:skew Brauer graph algebra}
    Let $\Gamma=(H,\iota,\sigma,m)$ the skew Brauer graph defined in Example \ref{ex:skew Brauer graph}. The skew Brauer graph algebra associated to $\Gamma$ is defined as follows : its quiver is described in Example \ref{ex:quiver of a skew Brauer graph} and its ideal of relations is generated by

    \smallskip

    \begin{enumerate}[label=(\Roman*)]
    \item \label{item1:skew Brauer graph algebra}$\alpha_{1^{+}}\alpha_{5^{+}}\alpha_{4^{+}}-\alpha_{4^{-}}^{3}$ and $16 \ (\alpha_{2}\phantom{}^{i_{2}}\ ^{i_{2}}\alpha_{3}\phantom{}^{i_{1}} \ ^{i_{1}}\alpha_{1^{-}})^{2}-\alpha_{5^{+}}\alpha_{4^{+}}\alpha_{1^{+}}$ for all $i_{1},i_{2}=0,1$ ;
    \item \label{item2:skew Brauer graph algebra} $\alpha_{1^{+}}\alpha_{5^{+}}\alpha_{4^{+}}\alpha_{1^{+}}, \alpha_{4^{+}}\alpha_{1^{+}}\alpha_{5^{+}}\alpha_{4^{+}}$, $\alpha_{5^{+}}\alpha_{4^{+}}\alpha_{1^{+}}\alpha_{5^{+}}$, $\alpha_{4^{-}}^{4}$ and $^{i_{1}}\alpha_{1^{-}} (\alpha_{2}\phantom{}^{i_{2}}\ ^{i_{2}}\alpha_{3}\phantom{}^{i_{1}} \ ^{i_{1}}\alpha_{1^{-}})^{2}$, \\$\phantom{}^{i_{1}}\alpha_{3}\phantom{}^{i} (\phantom{}^{i}\alpha_{1^{-}}\ \alpha_{2}{}^{i_{1}}\ ^{i_{1}}\alpha_{3}\phantom{}^{i})^{2}$, $\alpha_{2}\phantom{}^{i} (\phantom{}^{i}\alpha_{3}\phantom{}^{i_{1}}\ ^{i_{1}}\alpha_{1^{-}}\ \alpha_{2}{}^{i})^{2}$ for all $i,i_{1},i_{2}=0,1$;
    \item \label{item3:skew Brauer graph algebra}  $\alpha_{4^{-}}\alpha_{1^{+}}$, $\alpha_{4^{+}}\alpha_{4^{-}}$ and $\alpha_{1^{+}}\ \alpha_{2}\phantom{}^{i}$, $^{i}\alpha_{1^{-}}\ \alpha_{5^{+}}$ for all $i=0,1$;
    \item \label{item4:skew Brauer graph algebra}$\phantom{}^{i+1}\alpha_{1^{-}}\ \alpha_{2}^{i_{1}}\ ^{i_{1}}\alpha_{3}\phantom{}^{i}\ \phantom{}^{i}\alpha_{1^{-}}\ \alpha_{2}^{i_{1}}\ ^{i_{1}}\alpha_{3}\phantom{}^{i}$, $\phantom{}^{i+1}\alpha_{3}\phantom{}^{i_{1}}\ ^{i_{1}}\alpha_{1^{-}}\ \alpha_{2}{}^{i} \ \phantom{}^{i}\alpha_{3}\phantom{}^{i_{1}}\ ^{i_{1}}\alpha_{1^{-}}\ \alpha_{2}{}^{i}$ for all $i_{1}=0,1$;
    \item \label{item5:skew Brauer graph algebra}$^{i}\alpha_{3}\phantom{}^{0}\ ^{0}\alpha_{1^{-}}-{}^{i}\alpha_{3}\phantom{}^{1}\ ^{1}\alpha_{1^{-}}$, $\alpha_{2}\phantom{}^{0}\ ^{0}\alpha_{3}{}^{i}-\alpha_{2}\phantom{}^{1}\ ^{1}\alpha_{3}{}^{i}$ for all $i=0,1$.

    \end{enumerate}

    \smallskip

    \noindent Note that some of the relations in \ref{item2:skew Brauer graph algebra} follows from the relations in \ref{item1:skew Brauer graph algebra} and \ref{item3:skew Brauer graph algebra}.

\end{ex}

\medskip

\subsection{Covering of skew Brauer graphs}

\medskip

In this section, we explain how skew Brauer graph can be covered by a Brauer graph. This construction is analogous to the Galois covering of a Brauer graph with multiplicity (cf after Definition \ref{def:graded Brauer graphs}). In particular, we need to introduce a particular $\Z/2\Z$-grading on the skew Brauer graph.

\medskip

\begin{defn} \label{def:graded skew Brauer graph}
Let $\Gamma=(H,\iota,\sigma,m)$ be a skew Brauer graph.

\smallskip

\begin{itemize}[label=\textbullet, font=\tiny]
\item We say that a $\Z/2\Z$-grading $d:H\rightarrow\Z/2\Z$ is \textit{0-homogeneous} if 

\[\sum_{h\in H,s(h)=v}d(h)=0\]

\noindent for all $v\in H/\sigma$, where $s:H\rightarrow H/\sigma$ is the source map for $\circ$-vertex in $\Gamma$.

\item We say that $(\Gamma,d)$ is a \textit{$\Z/2\Z$-graded skew Brauer graph} if $d:H\rightarrow\Z/2\Z$ is $0$-homogeneous.

\end{itemize}
\end{defn}

\medskip

Let $(\Gamma,d)=(H,\iota,\sigma,m,d)$ be a $\Z/2\Z$-graded skew Brauer graph. One can construct a Brauer graph $\Gamma_{d}=(H_{d},\iota_{d},\sigma_{d},m_{d})$ as follows

\smallskip

\begin{itemize}[label=\textbullet, font=\tiny]
\item $H_{d}=H\times (\Z/2\Z)$ : an element of $H_{d}$ will be denoted $h_{i}$ for $h\in H$ and $i\in\Z/2\Z$;
\item For all $h_{i}\in H_{d}$, $\iota_{d}(h_{i})=\left\{\begin{aligned}
    &h_{i+1} &&\mbox{if $h\in H_{\times}$} \\
    &(\iota h)_{i} &&\mbox{if $h\in H_{\circ}$}
\end{aligned}\right.$;
\item For all $h_{i}\in H_{d}$, $\sigma_{d}(h_{i})=(\sigma h)_{i+d(h)}$;
\item For all $h_{i}\in H_{d}$, $m_{d}(h_{i})=m(h)$.

\end{itemize}

\smallskip

\noindent One can easily check that $\iota_{d}$ has no fixed points and that $m_{d}$ is constant on $\sigma_{d}$-orbits. Hence, $\Gamma_{d}$ is indeed a Brauer graph. Moreover, it is clear that the projection $p_{\Gamma}:H_{d}\rightarrow H$ defined a morphism of skew Brauer graphs from $\Gamma_{d}$ to $\Gamma$ i.e. it commutes with the involutions and the orientations of the skew Brauer graphs (cf Definition \ref{def:morphism of Brauer graph}). The Brauer graph $\Gamma_{d}$ will be called the \textit{covering} of $\Gamma$.

\medskip

\begin{ex} \label{ex:covering of a skew Brauer graph algebra}
    Let us consider the skew Brauer graph $\Gamma=(H,\iota,\sigma,m)$ defined in Example \ref{ex:skew Brauer graph}. We equip $\Gamma$ with a 0-homogeneous $\Z/2\Z$-grading $d:H\rightarrow\Z/2\Z$ defined by $d(h)=0$ for all $h\in H$ which can be represented on the graph as follows

    \smallskip

    \begin{center}
        \begin{tikzpicture}[scale=0.9]
            \tikzstyle{vertex}=[circle,draw,scale=0.6,minimum size=0.7cm]
            \node[vertex] (A) at (0,0) {2};
            \node[vertex] (B) at (-3,0) {};
            \node[vertex] (C) at (-5,1.5) {3};
            \node[vertex] (D) at (-5,-1.5) {};
            \draw[red] (A)--(B) node[near start, above, scale=0.9,black]{0} node[near end, above, scale=0.9,black]{0};
            \draw[VioletRed] (A)--(2,1.5) node[rotate=45, scale=1.3, black]{\textbf{\texttimes}} node[midway, above left, scale=0.9,black]{0};
            \draw[orange] (A)--(2,-1.5) node[rotate=45, scale=1.3, black]{\textbf{\texttimes}} node[midway, below left, scale=0.9,black]{0};
            \draw[Goldenrod] (B)--(C) node[near start, above right, scale=0.9,black]{0} node[near end, above right, scale=0.9,black]{0} ;
            \draw[Fuchsia] (B)--(D) node[near start, below right, scale=0.9,black]{0} node[near end, below right, scale=0.9,black]{0};
        \end{tikzpicture}
    \end{center}

    \smallskip

    \noindent Then, the covering $\Gamma_{d}=(H_{d},\iota_{d},\sigma_{d},m_{d})$ of the $\Z/2\Z$-graded skew Brauer graph ($\Gamma,d)$ is given by the following Brauer graph

    \smallskip

    \begin{center}
        \begin{tikzpicture}[scale=0.9]
            \tikzstyle{vertex}=[circle,draw,scale=0.6,minimum size=0.7cm]
            \node[vertex] (A) at (0,0) {2};
            \node[vertex] (B) at (-2.5,0) {};
            \node[vertex] (C) at (-5,1) {3};
            \node[vertex] (D) at (-5,-1) {};
            \node[vertex] (E) at (6,0) {2};
            \node[vertex] (F) at (3.5,0) {};
            \node[vertex] (G) at (1.5,1) {3};
            \node[vertex] (H) at (1.5,-1) {};
            \draw[red] (A)--(B) node[near start, above, scale=0.9]{$1^{+}_{0}$} node[near end, above, scale=0.9]{$1^{-}_{0}$};
            \draw[VioletRed] (A) to[bend left=65] node[near start, above left, scale=0.9]{$2_{0}$} node[near end, above right, scale=0.9]{$2_{1}$}(E);
            \draw[orange] (A) to[bend right=65] node[near start, below left, scale=0.9]{$3_{0}$} node[near end, below right, scale=0.9]{$3_{1}$} (E);
            \draw[Goldenrod] (B)--(C) node[near start, above right, scale=0.9]{$4^{+}_{0}$} node[near end, above right, scale=0.9]{$4^{-}_{0}$} ;
            \draw[Fuchsia] (B)--(D) node[near start, below right, scale=0.9]{$5^{+}_{0}$} node[near end, below right, scale=0.9]{$5^{-}_{0}$};
            \draw[red] (E)--(F) node[near start, above, scale=0.9]{$1^{+}_{1}$} node[near end, above, scale=0.9]{$1^{-}_{1}$};
            \draw[Goldenrod] (F)--(G) node[near start, above right, scale=0.9]{$4^{+}_{1}$} node[near end, above right, scale=0.9]{$4^{-}_{1}$} ;
            \draw[Fuchsia] (F)--(H) node[near start, below right, scale=0.9]{$5^{+}_{1}$} node[near end, below right, scale=0.9]{$5^{-}_{1}$};
        \end{tikzpicture}
    \end{center}

\end{ex}

\medskip

In what follows, we will prove that the skew Brauer graph algebra $B$ associated to $\Gamma$ can be recovered from the Brauer graph algebra $B_{d}$ associated to the covering $\Gamma_{d}$ thanks to the construction of the quiver and relations of a skew group algebra for a cyclic group \cite[Section 2.3]{RR}. We first need to define an action of the cyclic group $G:=(\mathbb{C}_{2},.)$ of order 2 on $B_{d}$ whose generator will be denoted by $g$. Denoting by $e_{[h_{i}]}$ the idempotent in $B_{d}$ corresponding to the edge $[h_{i}]\in H_{d}/\iota_{d}$, there is a natural action of $G$ on $B_{d}=kQ_{d}/I_{d}$ given by 

\smallskip

\begin{itemize}[label=\textbullet, font=\tiny]
    \item $g\ .\ e_{[h_{i}]}=e_{[h_{i+1}]}$ for all $[h_{i}]\in H_{d}/\iota_{d}$;
    \item $g\ .\ \alpha_{h_{i}}=\alpha_{h_{i+1}}$ where $\alpha_{h_{i}}$ is the arrow in $Q_{d}$ induced by the half-edge $h_{i}\in H_{d}$. 
\end{itemize}

\smallskip

\noindent Similarly, the dual $\widehat{G}$ of $G$ is also a cyclic group and we denote by $\chi$ its generator. Denoting by $e_{[h]_{i}}$ for $i=\varnothing,0,1$ the idempotent(s) in $B$ arising from the edge $[h]\in H/\iota$, there is a natural action of $\widehat{G}$ on $B=kQ/I$ given by

\smallskip

\begin{itemize}[label=\textbullet, font=\tiny]
    \item $\chi\ .\ e_{[h]}=e_{[h]}$ for all $[h]\in H_{\circ}/\iota$ and $\chi\ .\ e_{[h]_{i}}=e_{[h]_{i+1}}$ for all $[h]\in H_{\times}/\iota$;
    \item $\chi\ .\ {}^{j}\alpha_{h}{}^{i}=(-1)^{-d(h)} \ {}^{j+1}\alpha_{h}{}^{i+1}$ where $^{j}\alpha_{h}^{i}$ is an arrow in $Q$ induced by the half-edge $h\in H$. By an abuse of notation, if $i=\varnothing$ then $i+1=\varnothing$.
\end{itemize}

\smallskip

Note that the $G$-action on $B_{d}$ arises from a $G$-action on $Q_{d}$ which preserves the ideal of relations $I_{d}$ whereas the $\widehat{G}$-action on $B$ does not come from an action on its quiver.

\medskip

\begin{prop} \label{prop:covering and skew group algebra}
    Let $f$ be the idempotent in $B_{d} \, G$ given by the sum of the $e_{[h_{0}]}\otimes 1_{G}$ for all $[h]\in H/\iota$. Then, the algebra $fB_{d}\, Gf$ has a natural $\widehat{G}$-action and the map 

    \smallskip

    \begin{equation*}
        \begin{aligned}
            \phi: &&B &&&\longrightarrow &&fB_{d}\, Gf \\
            &&e_{[h]} &&&\longmapsto &&f_{[h]}:=e_{[h_{0}]}\otimes 1_{G} \\
            &&e_{[h]_{i}} &&&\longmapsto &&f_{[h]_{i}}:=e_{[h_{0}]}\otimes \frac{1_{G}+(-1)^{i}g}{2} &&\mbox{for $i=0,1$ } \\
            &&^{j}\alpha_{h}{}^{i} &&&\longmapsto &&^{j}\beta_{h}{}^{i} &&\mbox{for $i,j=\varnothing,0,1$}
        \end{aligned}
    \end{equation*}

    \smallskip

    \noindent is an isomorphism of algebras which commutes with the $\widehat{G}$-actions, where $^{j}\beta_{h}^{i}:f_{[h]_{i}}\rightarrow f_{[h]_{j}}$ are the arrows in $fB_{d}\, Gf$.

\end{prop}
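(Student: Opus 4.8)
The plan is to follow the same strategy as the proof of Proposition~\ref{prop:Galois covering and skew group algebra}. Since $G$ acts on $B_{d}$, the skew group algebra $B_{d}\, G$ carries the natural $\widehat{G}$-action described after Definition~\ref{def:skew group algebra}, and a generator $\chi$ of $\widehat{G}$ fixes $f=\sum_{[h]\in H/\iota}e_{[h_{0}]}\otimes 1_{G}$ because $\chi\ .\ (e_{[h_{0}]}\otimes 1_{G})=\chi(1_{G})\,e_{[h_{0}]}\otimes 1_{G}=e_{[h_{0}]}\otimes 1_{G}$; hence this $\widehat{G}$-action restricts to $fB_{d}\, Gf$. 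The heart of the argument is then to read off the quiver and relations of $fB_{d}\, Gf$ from the recipe of \cite[Section~2.3]{RR} and to match them with those of $B$ along $\phi$.

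For the quiver, observe that the induced $G$-action on the edge set $H_{d}/\iota_{d}$ of $\Gamma_{d}$ has free orbits $\{[h_{0}],[h_{1}]\}$ when $[h]\in H_{\circ}/\iota$ and fixed points $[h_{0}]=[h_{1}]$ when $[h]\in H_{\times}/\iota$ (since $\iota_{d}(h_{0})=h_{1}$ in that case). As $2=|G|$ is invertible in $k$, the primitive idempotent $e_{[h_{0}]}\otimes 1_{G}$ attached to a $G$-fixed edge splits as $f_{[h]_{0}}+f_{[h]_{1}}$ with $f_{[h]_{i}}=e_{[h_{0}]}\otimes\tfrac{1_{G}+(-1)^{i}g}{2}$ two orthogonal idempotents indexed by $\widehat{G}\simeq\Z/2\Z$, while a free orbit contributes the single vertex $f_{[h]}=e_{[h_{0}]}\otimes 1_{G}$; this reproduces exactly the vertex set $H_{\circ}/\iota\cup(H_{\times}/\iota\times\Z/2\Z)$ of $Q_{\Gamma}$. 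Taking $G$-orbit representatives of the arrows $\alpha_{h_{i}}$ of $Q_{d}$ and resolving them against this decomposition at the $\times$-edges they start from or end at, one recovers precisely the arrows ${}^{j}\beta_{h}{}^{i}\colon f_{[h]_{i}}\to f_{[h]_{j}}$; in particular, a half-edge with an endpoint in $H_{\times}$ contributes the two arrows ${}^{0}\beta_{h}{}^{i},{}^{1}\beta_{h}{}^{i}$ of $Q_{\Gamma}$. Hence $\phi$ restricts to an isomorphism of path algebras.

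It remains to match the ideals of relations. The covering $\Gamma_{d}$ is an ordinary Brauer graph with $m_{d}(h_{i})=m(h)$, so $I_{d}$ is generated by the relations of types (I), (II), (III) of Definition~\ref{def:Brauer graph algebra}; by the skew group algebra recipe, the ideal of $fB_{d}\, Gf$ is generated by the images $f(R'\otimes g^{t})f$ of the unique $G$-orbit representatives $R'$ of these relations, together with the relations forced by the identities $f_{[h]_{0}}+f_{[h]_{1}}=e_{[h_{0}]}\otimes 1_{G}$ at the $\times$-edges. A direct computation, along the lines of the one displayed in the proof of Proposition~\ref{prop:Galois covering and skew group algebra} but now keeping track of the dyadic scalars produced whenever a path recombines the summands $f_{[h]_{0}}+f_{[h]_{1}}$ at a $\times$-vertex, should give: the type~(I) relations yield~\ref{item:skew Brauer graph algebra 1}, the factor $2^{n_{\times}(h)}$ being the rescaling needed to absorb the dyadic scalars accumulated as the special cycle visits the $n_{\times}(h)$ many $\times$-vertices in the $\sigma$-orbit of $h$ (this is where $2$ invertible in $k$ is essential); the type~(II) relations yield~\ref{item:skew Brauer graph algebra 2} together with their character-shifted analogues~\ref{item:skew Brauer graph algebra 4} for $h\in H_{\times}$; the type~(III) relations yield~\ref{item:skew Brauer graph algebra 3}; and the idempotent splittings at the $\times$-edges yield the commutativity relations~\ref{item:skew Brauer graph algebra 5}. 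Since $h_{i}$ induces an arrow in $Q_{d}$ (for one, equivalently both, $i\in\Z/2\Z$) if and only if $h$ induces an arrow in $Q_{\Gamma}$, the two ideals coincide under $\phi$, so $\phi$ is an isomorphism of algebras. Finally, a routine check — using $\chi(g)=-1$ and comparing $\chi\ .\ \phi({}^{j}\alpha_{h}{}^{i})$ with $\phi\bigl((-1)^{-d(h)}\,{}^{j+1}\alpha_{h}{}^{i+1}\bigr)$, and similarly on idempotents — shows that $\phi$ intertwines the $\widehat{G}$-actions.

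The structural part (vertices, arrows, equivariance) is essentially identical to the Brauer graph with multiplicity case of Proposition~\ref{prop:Galois covering and skew group algebra}, so the main obstacle is the relations: one must verify that the skew group algebra recipe applied at the split $\times$-vertices produces exactly the two new families~\ref{item:skew Brauer graph algebra 4} and~\ref{item:skew Brauer graph algebra 5} and no further relations, and one must pin down the precise powers of $2$ (and the signs in~\ref{item:skew Brauer graph algebra 5}) appearing in the computation. These amount to finite but somewhat delicate computations with the idempotents $\tfrac{1_{G}\pm g}{2}$ and the elements $\alpha_{h_{i}}\otimes g^{k}$ of $B_{d}\, G$.
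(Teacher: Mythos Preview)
Your overall strategy is the paper's: invoke the Reiten--Riedtmann recipe to compute the quiver and relations of $fB_{d}\,Gf$ and match them with those of $B$ along $\phi$. The description of vertices and arrows, the origin of the scalars $2^{n_{\times}(h)}$, and the $\widehat{G}$-equivariance check are all handled as in the paper.

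The gap is in your attribution of where relations \ref{item:skew Brauer graph algebra 4} and \ref{item:skew Brauer graph algebra 5} come from. In the RR construction the relations of $fB_{d}\,Gf$ are obtained \emph{only} from $G$-orbit representatives of relations of $B_{d}$; the idempotent splitting at a $\times$-edge produces no relation on its own. The governing identity is
\[
\alpha_{h_{-d(h)}}\otimes g^{-d(h)}=\sum_{i,j}{}^{j}\beta_{h}{}^{i},
\]
so each relation of $B_{d}$ is sent to a \emph{sum} over all index choices. The type~(III) relation $\alpha_{\iota_{d}\sigma_{d}h_{i}}\alpha_{h_{i}}$ of $B_{d}$ then splits into two subcases: if $\sigma h\in H_{\circ}$ one obtains \ref{item:skew Brauer graph algebra 3}; but if $\sigma h\in H_{\times}$ then $\iota_{d}\sigma_{d}h_{i}=(\sigma h)_{i+d(h)+1}$, and pushing the relation to $fB_{d}\,Gf$ yields precisely ${}^{j}\beta_{\sigma h}{}^{0}\,{}^{0}\beta_{h}{}^{i}-{}^{j}\beta_{\sigma h}{}^{1}\,{}^{1}\beta_{h}{}^{i}$, i.e.\ \ref{item:skew Brauer graph algebra 5}. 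Similarly, the type~(I) relation of $B_{d}$ splits according to whether $h\in H_{\circ}$ or $h\in H_{\times}$: the first case, after using \ref{item:skew Brauer graph algebra 5} to collapse the sums, gives \ref{item:skew Brauer graph algebra 1}; for $h\in H_{\times}$ one has $\iota_{d}h_{0}=h_{1}$, and the relation $(C_{h_{0}})^{m(h)}-(C_{h_{1}})^{m(h)}$ yields \ref{item:skew Brauer graph algebra 4}. Type~(II) produces only \ref{item:skew Brauer graph algebra 2}. With your attribution --- \ref{item:skew Brauer graph algebra 4} from type~(II) and \ref{item:skew Brauer graph algebra 5} from ``idempotent splittings'' --- the computation will not close: you would be searching for these relations in the wrong place and would not recover the correct generating set for the ideal of $fB_{d}\,Gf$.
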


\medskip

\begin{proof}
Since $\chi\ . \ f=f$, the natural $\widehat{G}$-action on $B_{d}\, G$ extends onto a $\widehat{G}$-action on $fB_{d}\, Gf$. As for the proof of Proposition \ref{prop:Galois covering and skew group algebra}, the isomorphism follows again from the construction of the quiver and relations of $fB_{d}\, Gf$ given in \cite{RR}.

\begin{itemize}[label=\textbullet,font=\tiny]
\item The vertices of the quiver of $fB_{d}\, Gf$ are in bijection with the idempotents $f_{[h]_{i}}$ defined in the proposition for $i=\varnothing,0,1$. By definition, its arrows are of the form $^{j}\beta_{h}{}^{i}:f_{[h]_{i}}\rightarrow f_{[h]_{j}}$ for $i,j=\varnothing,0,1$.
\item The relations of $fB_{d}\, Gf$ are determined thanks to a representative in the $G$-orbits of the relations of $B_{d}$. Hence, its ideal of relation is generated by

\smallskip

\begin{enumerate}[label=(\Roman*')]
    \item \label{item:covering and skew group algebra 1} \[\sum_{\substack{i_{k}^{l}=\varnothing,0,1 \\ (k,l)\in \{1,\ldots,n\}\times\{1,\ldots,m(h)\}\backslash\{(n,m(h))\}}}\hspace{-1cm}{}^{i+1}\beta_{\sigma^{n-1}h}{}^{i_{n-1}^{m(h)}}\ldots{}^{i_{1}^{m(h)}}\beta_{h}{}^{i_{n}^{m(h)-1}}\ldots{}^{i_{n}^{1}}\beta_{\sigma^{n-1}h}{}^{i_{n-1}^{1}}\ldots{}^{i_{1}^{1}}\beta_{h}{}^{i}\]

    \noindent for all $i=0,1$ and $h\in H_{\times}$ inducing an arrow in $Q$, where $n$ is the size of the $\sigma$-orbit of $h$.

    \item \label{item:covering and skew group algebra 2}\[\left(\sum_{i_{1},\ldots,i_{n-1}=\varnothing,0,1}\hspace{-0.5cm}\beta_{\sigma^{n-1}h}{}^{i_{n-1}}\ldots{}^{i_{1}}\beta_{h}\right)^{m(h)}-\left(\sum_{i_{1},\ldots,i_{n'-1}-\varnothing,0,1}\hspace{-0.5cm}\beta_{\sigma^{n'-1}\iota h}{}^{i_{n-1}}\ldots{}^{i_{1}}\beta_{\iota h}\right)^{m(\iota h)}\]

    \noindent for all $h\in H_{\circ}$ such that $h$ and $\iota h$ both induce an arrow in $Q$, where $n$ and $n'$ are the size of the $\sigma$-orbit of $h$ and $\iota h$ respectively.

    \item \label{item:covering and skew group algebra 3} \[\sum_{\substack{i_{k}^{l}=\varnothing,0,1 \\ k=1,\ldots,n \\
    l=1,\ldots,m(h)}}{}^{j}\beta_{h}{}^{i_{n}^{m(h)}}\ {}^{i_{n}^{m(h)}}\beta_{\sigma^{n-1}h}{}^{i_{n-1}^{m(h)}}\ldots{}^{i_{1}^{m(h)}}\beta_{h}{}^{i_{n}^{m(h)-1}}\ldots{}^{i_{n}^{1}}\beta_{\sigma^{n-1}h}{}^{i_{n-1}^{1}}\ldots{}^{i_{1}^{1}}\beta_{h}{}^{i}\]

    \noindent for all $i,j=\varnothing,0,1$ and $h\in H$ inducing an arrow in $Q$, where $n$ is the size of the $\sigma$-orbit of $h$.

    \item \label{item:covering and skew group algebra 4}\[^{j}\beta_{\sigma h}{}^{0}\ {}^{0}\beta_{h}{}^{i}-{}^{j}\beta_{\sigma h}{}^{1}\ {}^{1}\beta_{h}{}^{i}\]

    \noindent for all $i,j=\varnothing,0,1$ and $h\in H$ inducing an arrow in $Q$ such that $\sigma h\in H_{\times}$.

    \item \[^{j}\beta_{\iota\sigma h}\ \beta_{h}{}^{i}\]

    \noindent \label{item:covering and skew group algebra 5}for all $i,j=\varnothing,0,1$ and $h\in H$ such that $h$ and $\iota \sigma h$ induce an arrow in $Q$ and $\sigma h\in H_{\circ}$.
\end{enumerate}
\end{itemize}

\smallskip

\noindent The relations are obtained the same way as in the proof of Proposition \ref{prop:Galois covering and skew group algebra}. In this case, there are sums appearing in the relations since we have

\[\alpha_{h_{-d(h)}}\otimes g^{-d(h)}=\sum_{i,j=\varnothing,0,1}{}^{i}\beta_{h}{}^{j}\]

\noindent Note that the relations \ref{item:covering and skew group algebra 1} and \ref{item:covering and skew group algebra 2} arise from the type \ref{item:Brauer graph algebra with multiplicity 1} relation in $B_{d}$, the relation \ref{item:covering and skew group algebra 3} arise from the type \ref{item:Brauer graph algebra with multiplicity 2} relation in $B_{d}$ and the relations \ref{item:covering and skew group algebra 4} and \ref{item:covering and skew group algebra 5} arise from the type \ref{item:Brauer graph algebra with multiplicity 3} relation in $B_{d}$. In particular, the relation \ref{item:covering and skew group algebra 4} means that all special $h_{i}$-cycle are equal in $fB_{d}\, Gf$. Hence, these five relations may be reformulated to obtain a correspondence with the relations of $B$ under the morphism $\phi$. 
\end{proof}

\medskip

Thanks to this proposition, one can understand skew Brauer graph algebras of multiplicity identically one as the trivial extension of a skew gentle algebra. In particular, this generalizes Theorem 3.13 in \cite{Schroll} which interprets Brauer graph algebras of multiplicity identically one as the trivial extension of gentle algebras. We recall that a skew gentle algebra is a basic algebra that is Morita equivalent to the skew group algebra of a gentle algebra equipped with a $\Z/2\Z$-action. (cf for instance Section 3.2 in \cite{AB}).

\medskip

\begin{defn} \label{def:admissible cut for skew Brauer graph}
Let $B=kQ/I$ be a skew Brauer graph algebra associated to $\Gamma=(H,\iota,\sigma,m)$ and $\Delta$ be a subset of $H$ consisting of representatives of each $\sigma$-orbits . An \textit{admissible cut} of $B$ is the set of all arrows in $Q$ induced by the half-edges in $\Delta$. By an abuse of notation, we also denote by $\Delta$ the admissible cut.
\end{defn}

\medskip

\begin{thm} \label{thm:skew Brauer graph algebras of multiplicity as trivial extension}
Let $B=kQ/I$ be a skew Brauer graph algebra of multiplicity identically one associated to $\Gamma=(H,\iota,\sigma)$ and equipped with an admissible cut $\Delta$. Then, the following holds.

\smallskip

\begin{enumerate}[label=(\arabic*)]
\item The algebra $B_{\Delta}:=kQ/\langle I\cup \Delta\rangle$ is a skew gentle algebra, where $\langle I\cup\Delta\rangle$ denotes the ideal of $kQ$ generated by $I\cup\Delta$.
\item The trivial extension of $B_{\Delta}$ is isomorphic to $B$.
\end{enumerate}
    
\end{thm}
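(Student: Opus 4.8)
The natural approach is to reduce everything to the already-established non-skew case by using the $\mathbb{Z}/2\mathbb{Z}$-covering $\Gamma_d$ and the skew group algebra description from Proposition \ref{prop:covering and skew group algebra}, combined with the compatibility of trivial extension with skew group algebras from Proposition \ref{prop:trivial extension of a skew group algebra}. Concretely, equip $B = kQ/I$ with the admissible cut $\Delta$, and let $d$ be the $0$-homogeneous $\mathbb{Z}/2\mathbb{Z}$-grading that is identically $0$ (or any $0$-homogeneous grading), so that $\Gamma_d$ is an honest Brauer graph of multiplicity identically $1$. Write $B_d = kQ_d/I_d$ for its Brauer graph algebra, and let $G = (\mathbb{C}_2,\cdot)$ act on $B_d$ as in the excerpt, so that $B \simeq f B_d\, G f$ by Proposition \ref{prop:covering and skew group algebra}. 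The admissible cut $\Delta$ of $B$ lifts to a $G$-stable admissible cut $\Delta_d$ of $B_d$ (each $\sigma$-orbit representative $h\in\Delta$ gives the two half-edges $h_0,h_1$, which form a set of representatives of the $\sigma_d$-orbits since $\sigma_d(h_i) = (\sigma h)_{i+d(h)}$ never changes the number of $\sigma$-orbits). Then $B_{d,\Delta_d} := kQ_d/\langle I_d \cup \Delta_d\rangle$ is a gentle algebra by Theorem 3.13 of \cite{Schroll} (the non-skew, multiplicity-one case), and $\mathrm{Triv}(B_{d,\Delta_d}) \simeq B_d$ by that same theorem.

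For part (1): I would check that the $G$-action on $B_d$ descends to $B_{d,\Delta_d}$ (clear, since $\Delta_d$ is $G$-stable and $I_d$ is $G$-stable), so one can form $B_{d,\Delta_d}\, G$. The claim is then that $B_\Delta = kQ/\langle I\cup\Delta\rangle$ is Morita equivalent to $B_{d,\Delta_d}\, G$, hence a skew gentle algebra by definition. This should follow by repeating the computation of the quiver and relations of $f\, (B_{d,\Delta_d}\, G)\, f$ from the proof of Proposition \ref{prop:covering and skew group algebra}: killing the arrows in $\Delta$ of $B$ corresponds exactly to killing the arrows induced by $\Delta_d$ in $f B_d\, G f$, because the isomorphism $\phi$ of Proposition \ref{prop:covering and skew group algebra} sends the arrow $^{j}\alpha_h{}^{i}$ to $^{j}\beta_h{}^{i}$, and the arrows of $f B_d\, G f$ coming from a given $h\in H$ are precisely those induced by the lifts $h_0,h_1$. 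Thus $\phi$ induces an isomorphism $B_\Delta \simeq f\,(B_{d,\Delta_d}\, G)\, f$, and since $f$ is a full idempotent (it hits every vertex of the quiver of $B_{d,\Delta_d}\, G$ up to the $\widehat G$-action, exactly as in Proposition \ref{prop:covering and skew group algebra}) this exhibits $B_\Delta$ as Morita equivalent to the skew group algebra of the gentle algebra $B_{d,\Delta_d}$, i.e. $B_\Delta$ is skew gentle.

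For part (2): combine the pieces. We have $B \simeq f B_d\, G f$, and $B_d \simeq \mathrm{Triv}(B_{d,\Delta_d})$ as $G$-algebras — here one must check the isomorphism of Theorem 3.13 in \cite{Schroll} is $G$-equivariant, which holds because both the $G$-action on $B_d$ and the induced $G$-action on $\mathrm{Triv}(B_{d,\Delta_d})$ are given on arrows/their duals by the same permutation $\alpha_{h_i}\mapsto\alpha_{h_{i+1}}$. Then Proposition \ref{prop:trivial extension of a skew group algebra} gives $\mathrm{Triv}(B_{d,\Delta_d})\, G \simeq \mathrm{Triv}(B_{d,\Delta_d}\, G)$, and cutting down by $f$ (which is compatible with trivial extension since $f$ corresponds to a $\widehat G$-stable, hence $G$-action-compatible, idempotent and $\mathrm{Triv}(eAe) \simeq e\,\mathrm{Triv}(A)\,e$ for a full idempotent $e$) yields $\mathrm{Triv}(B_\Delta) \simeq \mathrm{Triv}(f\,(B_{d,\Delta_d}\, G)\,f) \simeq f\,\mathrm{Triv}(B_{d,\Delta_d}\, G)\,f \simeq f\,\mathrm{Triv}(B_{d,\Delta_d})\, G\, f \simeq f B_d\, G f \simeq B$. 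I expect the main obstacle to be the bookkeeping in part (1) — verifying that the quiver-with-relations of $f\,(B_{d,\Delta_d}\, G)\,f$ really is $kQ/\langle I\cup\Delta\rangle$ and that $f$ is full — together with the verification that the isomorphism $B_d \simeq \mathrm{Triv}(B_{d,\Delta_d})$ can be chosen $G$-equivariantly; once these are in place, part (2) is a formal concatenation of Proposition \ref{prop:trivial extension of a skew group algebra} and the idempotent-truncation compatibility.
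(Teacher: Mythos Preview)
Your proposal is correct and follows essentially the same route as the paper: lift $\Delta$ to the $G$-stable admissible cut $p_\Gamma^{-1}(\Delta)$ on the covering $\Gamma_d$, apply Schroll's Theorem 3.13 to get that $(B_d)_{p_\Gamma^{-1}(\Delta)}$ is gentle with trivial extension $B_d$, check this isomorphism is $G$-equivariant, then combine Proposition~\ref{prop:trivial extension of a skew group algebra} with the idempotent truncation $f\,\mathrm{Triv}(A)\,f\simeq\mathrm{Triv}(fAf)$ and the restriction of the isomorphism of Proposition~\ref{prop:covering and skew group algebra} to conclude. The paper carries out exactly these steps, including writing down the explicit isomorphism $f\,\mathrm{Triv}(A)\,f\overset{\sim}{\to}\mathrm{Triv}(fAf)$ that you flag as needing verification.
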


\medskip

\begin{proof}
    Let us consider $d:H\rightarrow\Z/2\Z$ a $0$-homogeneous $\Z/2\Z$-grading  on $\Gamma$. Hence, one can construct $\Gamma_{d}=(H_{d},\iota_{d},\sigma_{d})$ the covering of $\Gamma$ which is a Brauer graph of multiplicity identically one by construction. We denote by $B_{d}$ the Brauer graph algebra associated to $\Gamma_{d}$ and by $p_{\Gamma}:\Gamma_{d}\rightarrow \Gamma$ the morphism of skew Brauer graphs given by the projection (cf before Example \ref{ex:covering of a skew Brauer graph algebra}). One can easily check that $p_{\Gamma}^{-1}(\Delta)$ is an admissible cut for $\Gamma_{d}$ that is stable under the $G$-action on $\Gamma_{d}$ defined before Proposition \ref{prop:covering and skew group algebra}, where $G=(\mathbb{C}_{2},.)$ denotes the cyclic group of order 2. By Theorem 3.13 in \cite{Schroll}, $(B_{d})_{p_{\Gamma}^{-1}(\Delta)}$ is a gentle algebra whose trivial extension is isomorphic to $B_{d}$. Moreover, $(B_{d})_{p_{\Gamma}^{-1}(\Delta)}$ has a natural $G$-action which can be extended as a $G$-action onto its trivial extension as explained before Proposition \ref{prop:trivial extension of a skew group algebra}. One can check that the isomorphism of algebras $\mathrm{Triv}((B_{d})_{p_{\Gamma}^{-1}(\Delta)})\simeq B_{d}$  commute with the $G$-actions. Thanks to Proposition \ref{prop:trivial extension of a skew group algebra}, we deduce the following isomorphisms of algebras

    \[B_{d}\, G\simeq \mathrm{Triv}((B_{d})_{p_{\Gamma}^{-1}(\Delta)})\, G\simeq \mathrm{Triv}((B_{d})_{p_{\Gamma}^{-1}(\Delta)}\, G)\]

    \noindent By Proposition \ref{prop:covering and skew group algebra}, we have the following isomorphisms of algebras

    \[B\simeq fB_{d}\, Gf\simeq f\, \mathrm{Triv}((B_{d})_{p_{\Gamma}^{-1}(\Delta)}\, G)f\]

    \noindent Moreover, denoting by $A$ the algebra $(B_{d})_{p_{\Gamma}^{-1}(\Delta)}\, G$, one can check that the following morphism also gives an isomorphism of algebras

    \smallskip

    \begin{equation*}
        \begin{aligned}
            &&f\,\mathrm{Triv}(A)f &&\overset{\sim}{\longrightarrow} &&&\mathrm{Triv}(fAf) \\
            &&f.(a,\phi).f &&\longmapsto &&& (faf,(f\phi f)_{|fAf}) \\
            &&f.(b,\psi').f &&\longmapsfrom &&&(fbf,\psi)
        \end{aligned}
    \end{equation*}

    \smallskip

    \noindent where $\psi':A\rightarrow k$ is defined by $\psi'(c)=\psi(fcf)$ for all $c\in A$. Furthermore, one can easily check that the isomorphism of algebras given in Proposition \ref{prop:covering and skew group algebra} restricts to an isomorphism of algebras between $B_{\Delta}=B/\langle\Delta\rangle$ and $fAf=f(B_{d}/\langle p_{\Gamma}^{-1}(\Delta)\rangle )\, Gf$. This concludes the proof.
\end{proof}

\medskip

\subsection{Generalized Kauer moves}

\medskip

Our goal is to define generalized Kauer moves for skew Brauer graph algebras so that they can be interpreted in terms of silting mutations. The idea of the proof is analogous to the one used in Section 2. We will use the covering defined in the previous subsection which can be constructed whenever the skew Brauer graph is equipped with a 0-homogeneous $\Z/2\Z$-grading. Hence, we need to define a graded version of these generalized Kauer moves. We begin by defining them for sectors (cf Definition \ref{def:sector}).

\medskip

\begin{defn}\label{def:graded generalized Kauer moves of a sector for skew Brauer graph algebras}
    Let $(\Gamma,d)=(H,\iota,\sigma,m,d)$ be a $\Z/2\Z$-graded skew Brauer graph and $H'\subset H$ be stable under $\iota$. The \textit{$\Z/2\Z$-graded generalized Kauer move} of a sector $(h,r)\in\mathrm{sect}(H',\sigma)$ in $\Gamma$ gives rise to a $\Z/2\Z$-graded skew Brauer graph $\mu^{+}_{(h,r)}(\Gamma,d)=(H,\iota,\sigma_{(h,r)},m_{(h,r)},d_{(h,r)})$ where

\smallskip

\begin{equation*}
    \begin{aligned}
        \sigma_{(h,r)}=(h \ \sigma^{r+1}h)\sigma(\sigma^{r}h \ \iota\sigma^{r+1}h) \qquad \mbox{and} \qquad 
        m_{(h,r)}:  &&H &&&\rightarrow &&\Z_{>0} \\
        &&\sigma^{i}h &&&\mapsto &&m(\iota\sigma^{r+1}h) \quad \mbox{for $i=0,\ldots,r$} \\
        &&h' &&&\mapsto &&m(h') \quad \mbox{for $h'\neq\sigma^{i}h$}
    \end{aligned}
\end{equation*}

\smallskip

\noindent and where the $\Z/2\Z$-grading $d_{(h,r)}:H\rightarrow \Z/2\Z$ is defined by

\smallskip

\begin{align*}
        d_{(h,r)}: \ &\iota\sigma^{r+1}h &&\mapsto &&\left\{\begin{aligned}&-\sum_{i=0}^{r}d(\sigma^{i}h) &&\mbox{if $\sigma^{r+1}h\in H_{\circ}$} \\
        &-\sum_{i=0}^{r}d(\sigma^{i}h)-1 &&\mbox{if $\sigma^{r+1}h\in H_{\times}$}\end{aligned}\right. \\
        &\sigma^{r}h &&\mapsto &&\left\{\begin{aligned}
            &d(\iota\sigma^{r+1}h)+d(\sigma^{r}h) &&\mbox{if $\iota\sigma^{r+1}h\neq\sigma^{-1}h$ and $\sigma^{r+1}h\in H_{\circ}$} \\
            &d(\iota\sigma^{r+1}h)+d(\sigma^{r}h)+1 &&\mbox{if $\iota\sigma^{r+1}h\neq\sigma^{-1}h$ and $\sigma^{r+1}h\in H_{\times}$} \\
            &\sum_{i=-1}^{r}d(\sigma^{i}h)+d(\sigma^{r}h) &&\mbox{if $\iota\sigma^{r+1}h=\sigma^{-1}h$ and $\sigma^{r+1}h\in H_{\circ}$} \\
            &\sum_{i=-1}^{r}d(\sigma^{i}h)+d(\sigma^{r}h)+1 &&\mbox{if $\iota\sigma^{r+1}h=\sigma^{-1}h$ and $\sigma^{r+1}h\in H_{\times}$}
        \end{aligned}\right. \\ 
        &\sigma^{-1}h &&\mapsto &&\left\{\begin{aligned}
            &\sum_{i=-1}^{r}d(\sigma^{i}h) &&\mbox{if $\iota\sigma^{r+1}h\neq\sigma^{-1}h$} \\
            &d_{(h,r)}(\iota\sigma^{r+1}h) &&\mbox{if $\iota\sigma^{r+1}h=\sigma^{-1}h$}   
        \end{aligned}\right. \\
        &h' &&\mapsto &&d(h') \qquad \mbox{for $h'\neq\iota\sigma^{r+1}h,\sigma^{r}h,\sigma^{-1}h$}
\end{align*}

\smallskip

\noindent The underlying Brauer graph $\mu^{+}_{(h,r)}(\Gamma)=(H,\iota,\sigma_{(h,r)},m_{(h,r)})$ can be obtained from $\Gamma$ as in Figure \ref{Generalized Kauer move of a sector} and in Remark \ref{rem:special case of generalized Kauer move of a sector} when $\sigma^{r+1}h\in H_{\circ}$ and as follows when $\sigma^{r+1}h\in H_{\times}$

\smallskip

\begin{figure}[H]
    \centering
              
    \begin{tikzpicture}[scale=0.9]
        \tikzstyle{vertex}=[draw, circle, scale=0.6, minimum size=0.8cm]
        \begin{scope}[xshift=-3cm]
        \node[vertex] (1) at (-2,0) {m};
        \draw (1)--(0,0) node[midway, above]{$\sigma^{r+1}h$}  node[scale=1.3]{\textbf{\texttimes}};
        \draw (1)--(-2.5,-1.75) node[below]{$\sigma^{-1}h$};
        \draw[Orange] (1)--(-1.5,-1.75) node[below, right]{$h$};
        \draw[Orange] (1)--(-1,-1.25) node[below, right]{$\sigma^{j}h$};
        \draw[Orange] (1)--(-0.5,-0.75) node[below, right]{$\sigma^{r}h$};
        \draw (1)--(-2.5,1.75) node[above]{$\sigma^{r+2}h$};
        \draw[|-|, Orange] (-1.5,-2.2)--(0,-2.2) node[below, midway]{$(h,r)$};
        \draw (-1.25,-3.5) node{$\Gamma=(H,\iota,\sigma,m)$};
        \end{scope} 
        
        \draw[->] (-1,0)--(2.5,0);

        \begin{scope}[xshift=6.5cm]
        \node[vertex] (1) at (-2,0) {m};
        \draw (1)--(0,0) node[midway, below,yshift=-1mm]{$\sigma^{r+1}h$}  node[scale=1.3]{\textbf{\texttimes}};
        \draw (1)--(-2.5,-1.75) node[below]{$\sigma^{-1}h$};
        \draw[Orange] (1)--(-1.5,1.75) node[below, right]{$\sigma^{r}h$};
        \draw[Orange] (1)--(-1,1.25) node[below, right]{$\sigma^{j}h$};
        \draw[Orange] (1)--(-0.5,0.75) node[below, right]{$h$};
        \draw (1)--(-2.5,1.75) node[above]{$\sigma^{r+2}h$};
        \draw (-1.25,-3.5) node{$\mu^{+}_{(h,r)}(\Gamma)=(H,\iota,\sigma_{(h,r)},m_{(h,r)})$};
        \end{scope}
        \end{tikzpicture}
    \caption{Generalized Kauer move of a sector $(h,r)$ when $\sigma^{r+1}h$ is in $H_{\times}$}
    \label{Generalized Kauer move of a sector in skew Brauer graph algebra}
\end{figure}
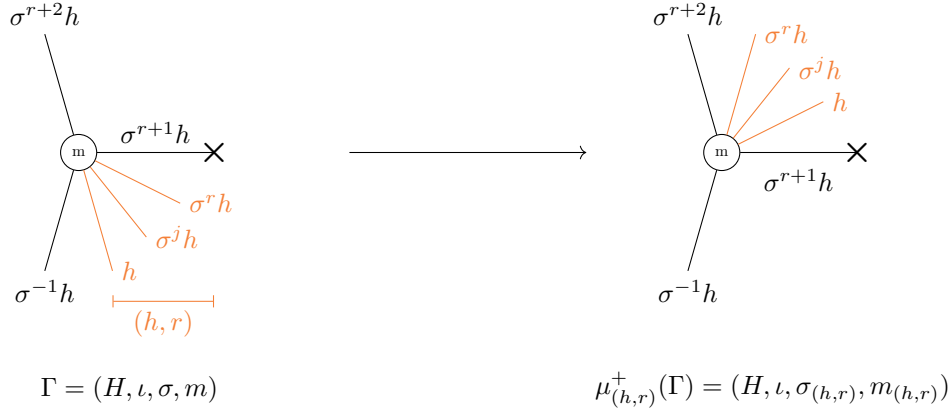
\end{defn}

\medskip

\begin{rem} \label{rem:special case of generalized Kauer move of a sector for skew Brauer graph}
    In the special case where $\iota\sigma^{r+1}h=\sigma^{-1}h$, then the $\Z/2\Z$-graded generalized Kauer move of the sector $(h,r)$ does not change the underlying Brauer graph which is given in Remark \ref{rem:special case of generalized Kauer move of a sector} when $\sigma^{r+1}h\in H_{\circ}$ and as follows when $\sigma^{r+1}h\in H_{\times}$

    \smallskip

    \begin{center}
    \begin{tikzpicture}[scale=0.9]
        \tikzstyle{vertex}=[draw, circle, scale=0.6, minimum size=0.8cm]

        \node[vertex] (1) at (-2,0) {m};
        \draw (1)--(-5,0) node[midway, below,yshift=-1mm]{$\sigma^{r+1}h=\sigma^{-1}h$}  node[scale=1.3]{\textbf{\texttimes}};
        \draw[Orange] (1)--(0,-1.25) node[below, right]{$h$};
        \draw[Orange] (1)--(0.5,0) node[below, right]{$\sigma^{j}h$};
        \draw[Orange] (1)--(0,1.25) node[below, right]{$\sigma^{r}h$};
        \draw (-2,-2.5) node{$\Gamma=\mu^{+}_{(h,r)}(\Gamma)$};
        \end{tikzpicture}
        \end{center}

        \smallskip

        \noindent Similarly to Remark \ref{rem:special case of generalized Kauer move of a sector}, the degree of the  $\sigma^{i}h$ will a priori change in the process. In particular, the covering of $\Gamma$ and $\mu^{+}_{(h,r)}(\Gamma)$ could be different even in this case.
\end{rem}

\medskip

By construction of the generalized Kauer move described in Figure \ref{Generalized Kauer move of a sector} and in the previous figure, it is clear that $m_{(h,r)}$ is constant on the $\sigma_{(h,r)}$-orbits. Moreover, $d_{(h,r)}:H\rightarrow\Z/2\Z$ is 0-homogeneous. Indeed, one can construct a bijection $\phi:H/\sigma\rightarrow H/\sigma_{(h,r)}$ satisfying

\begin{equation*}
    \begin{aligned}
        &s_{(h,r)}(\sigma^{i}h)=\phi(s(\iota\sigma^{r+1}h)) &&\mbox{for $i=0,\ldots, r$} \\
        &s_{(h,r)}(h')=\phi(s(h')) &&\mbox{for $h'\neq \sigma^{i}h, i=0\ldots,r$}
    \end{aligned}
\end{equation*}

\noindent where $s:H\rightarrow H/\sigma$ and $s_{(h,r)}:H\rightarrow H/\sigma_{(h,r)}$ are the source map for $\circ$-vertices of $\Gamma$ and $\mu^{+}_{(h,r)}(\Gamma)$ respectively. Then, one can easily check that for all vertex $v\in H/\sigma_{(h,r)}$ in $\mu^{+}_{(h,r)}(\Gamma)$

\[\sum_{h'\in H, s_{(h,r)}(h')=v}d_{(h,r)}(h')=\sum_{h'\in H, s(h')=\phi^{-1}(v)}d(h')\]

\medskip

\begin{rem}
    By definition, if $H_{\times}=\varnothing$ then $\Gamma$ is a Brauer graph and by construction $\mu^{+}_{(h,r)}(\Gamma)$ is also a Brauer graph. Hence, this definition generalizes Definition \ref{def:graded generalized Kauer moves with multiplicity of a sector}.
\end{rem}

\medskip

Thanks to Lemma 2.4 in \cite{Soto}, we can consider successive $\Z/2\Z$-graded generalized Kauer moves. In the following, we denote by

\[\mu^{+}_{(h_{1},r_{1})(h_{2},r_{2})}(\Gamma,d)=(H,\iota,\sigma_{(h_{1},r_{1})(h_{2},r_{2})},m_{(h_{1},r_{1})(h_{2},r_{2})},d_{(h_{1},r_{1})(h_{2},r_{2})})\]

\noindent the $\Z/2\Z$-graded skew Brauer graph defined by $\mu^{+}_{(h_{1},r_{1})}(\mu^{+}_{(h_{2},r_{2})}(\Gamma,d))$. Using similar computations as in the proof of Proposition 2.5 in \cite{Soto}, we obtain the following result.

\medskip

\begin{prop} \label{prop:successive graded generalized Kauer moves in skew Brauer graph algebras}
    Let $\Gamma=(H,\iota,\sigma,m)$ be a skew Brauer graph and $H'$ be a subset of $H$ stable under $\iota$. Let $(h_{1},r_{1}),(h_{2},r_{2})$ be two distinct maximal sectors in $\Gamma$ of elements in $H'$. For any $0$-homogeneous $\Z/2\Z$-grading $d:H\rightarrow\Z/2\Z$ of $\Gamma$, we have

    \[\mu^{+}_{(h_{1},r_{1})(h_{2},r_{2})}(\Gamma,d)=\mu^{+}_{(h_{2},r_{2})(h_{1},r_{1})}(\Gamma,d)\]
\end{prop}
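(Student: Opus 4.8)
The plan is to split the claimed equality of $\Z/2\Z$-graded skew Brauer graphs into three independent verifications, according to the three pieces of data that a generalized Kauer move can change. A move $\mu^{+}_{(h,r)}$ leaves the half-edge set $H$ and the involution $\iota$ untouched, so it suffices to prove
\begin{gather*}
\sigma_{(h_{1},r_{1})(h_{2},r_{2})}=\sigma_{(h_{2},r_{2})(h_{1},r_{1})},\qquad
m_{(h_{1},r_{1})(h_{2},r_{2})}=m_{(h_{2},r_{2})(h_{1},r_{1})},\\
d_{(h_{1},r_{1})(h_{2},r_{2})}=d_{(h_{2},r_{2})(h_{1},r_{1})}.
\end{gather*}
Throughout I would invoke Lemma 2.4 of \cite{Soto} to make sense of successive moves, together with the hypothesis that $(h_{1},r_{1})$ and $(h_{2},r_{2})$ are \emph{distinct} maximal sectors of elements in an $\iota$-stable set $H'$: this makes the interiors $\{\sigma^{i}h_{\ell}\}_{i=0}^{r_{\ell}}$ pairwise disjoint and disjoint from the half-edges $\sigma^{-1}h_{\ell'}$ and $\iota\sigma^{r_{\ell'}+1}h_{\ell'}$ (which do not lie in $H'$), so that a move at one sector affects the other only through its three exceptional half-edges $\sigma^{-1}h_{\ell}$, $\sigma^{r_{\ell}}h_{\ell}$, $\iota\sigma^{r_{\ell}+1}h_{\ell}$ — the ones at which $d_{(h_{\ell},r_{\ell})}$ may differ from $d$.

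For the orientation, the formula $\sigma_{(h,r)}=(h\ \sigma^{r+1}h)\,\sigma\,(\sigma^{r}h\ \iota\sigma^{r+1}h)$ is literally the one used for Brauer graphs; the only new feature is that when $\sigma^{r+1}h\in H_{\times}$ the transposition $(\sigma^{r}h\ \iota\sigma^{r+1}h)$ collapses to $(\sigma^{r}h\ \sigma^{r+1}h)$, which is still a transposition. Hence the identity $\sigma_{(h_{1},r_{1})(h_{2},r_{2})}=\sigma_{(h_{2},r_{2})(h_{1},r_{1})}$ is obtained by exactly the computation of Proposition 2.5 in \cite{Soto}. For the multiplicity, the rule defining $m_{(h,r)}$ is unchanged from Definition \ref{def:graded generalized Kauer moves with multiplicity of a sector}, and the argument is the one in the proof of Proposition \ref{prop:commutativity of generalized Kauer moves of sectors}: off the two interiors both composites equal $m$, while on $\sigma^{i}h_{1}$ both equal $m(\iota\sigma^{r_{1}+1}h_{1})$ once one observes that $\iota\sigma^{r_{1}+1}h_{1}\notin H'$ is not in the interior of the second sector and so retains its multiplicity under the move at that sector.

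The genuinely new part, and the step I expect to be the main obstacle, is the equality $d_{(h_{1},r_{1})(h_{2},r_{2})}=d_{(h_{2},r_{2})(h_{1},r_{1})}$. Since $d_{(h,r)}$ agrees with $d$ away from $\iota\sigma^{r+1}h$, $\sigma^{r}h$, $\sigma^{-1}h$, both composites agree with $d$ except possibly on the at most six half-edges $\iota\sigma^{r_{\ell}+1}h_{\ell}$, $\sigma^{r_{\ell}}h_{\ell}$, $\sigma^{-1}h_{\ell}$ ($\ell=1,2$); and on a half-edge belonging to only one of these two triples the single-move formula applies, and by the disjointness above its input data are untouched by the other move, so those cases are immediate. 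The work is concentrated in the ``adjacent'' configurations, where a half-edge of one triple coincides with one of the other — essentially when the gap half-edge following one sector is equal or $\iota$-paired to a gap half-edge of the other, together with the degenerate case $\iota\sigma^{r_{\ell}+1}h_{\ell}=\sigma^{-1}h_{\ell}$. This is the same finite list of patterns handled for the orientations in Proposition 2.5 of \cite{Soto}, and in each pattern I would write out the two accumulated degrees and simplify modulo $2$. The extra bookkeeping absent from \cite{Soto} is the family of $+1$ corrections that appear precisely when the triggering half-edge lies in $H_{\times}$; the point to verify is that in each adjacency pattern these corrections are produced symmetrically on the two sides, so that they cancel in the difference, which uses only that $H_{\times}$ is a fixed subset of $H$ that no move alters. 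I would organise the proof by first disposing of the non-adjacent half-edges and then running through the adjacency patterns one at a time; this is a finite but somewhat tedious verification, which is exactly why I single it out as the hard step.
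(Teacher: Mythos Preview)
Your proposal is correct and follows the same approach as the paper: the paper itself gives no detailed proof for this proposition, stating only that it follows from ``similar computations as in the proof of Proposition 2.5 in \cite{Soto}'', and your decomposition into the orientation, multiplicity, and grading components (with the first two handled exactly as in \cite{Soto} and Proposition \ref{prop:commutativity of generalized Kauer moves of sectors}, and the grading treated by the case analysis you describe) is precisely what this entails. If anything, your outline is more explicit than what the paper provides.
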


\medskip

\begin{defn}\label{def:graded generalized Kauer moves in skew Brauer graph algebra}
Let $(\Gamma,d)=(H,\iota,\sigma,m,d)$ be a $\Z/2\Z$-graded skew Brauer graph and $H'$ be a subset of $H$ stable under $\iota$. The \textit{$\Z/2\Z$-graded generalized Kauer move} of $H'$ is the succession of the $\Z/2\Z$-graded generalized Kauer move of all the maximal sectors $(h,r)\in\mathrm{Sect}(H',\sigma)$ in $\Gamma$. This gives rise to a $\Z/2\Z$-graded skew Brauer graph that will be denoted by $\mu^{+}_{H'}(\Gamma,d)=(H,\iota,\sigma_{H'},m_{H'},d_{H'})$.
    
\end{defn}

\medskip

\begin{ex} \label{ex:generalized Kauer move in skew Brauer graph}
Let us consider $\Gamma=(H,\iota,\sigma,m,d)$ the $\Z/2\Z$-graded skew Brauer graph defined in Example \ref{ex:covering of a skew Brauer graph algebra}. Let $H'=\{1^{+},1^{-},4^{+},4^{-}\}$ be a subset of H stable under $\iota$. Then, the $\Z/2\Z$-graded skew Brauer graph $\mu^{+}_{H'}(\Gamma,d)=(H,\iota,\sigma_{H'},m_{H'},d_{H'})$ obtained from a $\Z/2\Z$-graded generalized Kauer move is given by

\smallskip

 \begin{center}
        \begin{tikzpicture}[scale=0.68]
            \tikzstyle{vertex}=[circle,draw,scale=0.6,minimum size=0.7cm]
            \begin{scope}[xshift=-6.5cm]
            \node[vertex] (A) at (0,0) {2};
            \node[vertex] (B) at (-3,0) {};
            \node[vertex] (C) at (-5,1.5) {3};
            \node[vertex] (D) at (-5,-1.5) {};
            \draw[red] (A)--(B) node[near start, above,scale=0.9]{$1^{-}$} node[near end, above,scale=0.9]{$1^{+}$};
            \draw[VioletRed] (A)--(2,1.5) node[rotate=45, scale=1.3, black]{\textbf{\texttimes}} node[midway, above left,scale=0.9]{$2$};
            \draw[orange] (A)--(2,-1.5) node[rotate=45, scale=1.3, black]{\textbf{\texttimes}} node[midway, below left,scale=0.9]{$3$};
            \draw[Goldenrod] (B)--(C) node[near start, above right,scale=0.9]{$4^{+}$} node[near end, above right,scale=0.9]{$4^{-}$} ;
            \draw[Fuchsia] (B)--(D) node[near start, below right,scale=0.9]{$5^{+}$} node[near end, below right,scale=0.9]{$5^{-}$};
            \draw (-1.5,-3) node[scale=0.9]{$\Gamma$};
            \end{scope}
            \draw[->] (-4,0)--(0,0) node[scale=0.8,above, midway, yshift=1mm]{Generalized Kauer move} node[scale=0.8, below, yshift=-1mm, midway]{of $H'=\{1^{+},1^{-},4^{+},4^{-}\}$};
            \begin{scope}[xshift=6.5cm]
            \node[vertex] (A) at (0,0) {2};
            \node[vertex] (B) at (-2,-1.5) {};
            \node[vertex] (C) at (-5,1.5) {3};
            \node[vertex] (D) at (-5,-1.5) {};
            \draw[red] (A) to[bend right=30] node[near start, above,scale=0.9]{$1^{-}$} node[near end, above,scale=0.9]{$1^{+}$} (D) ;
            \draw[Orange] (A)--(2,1.5) node[rotate=45, scale=1.3, black]{\textbf{\texttimes}} node[midway, above left,scale=0.9]{$3$};
            \draw[VioletRed] (A)--(2,-1.5) node[rotate=45, scale=1.3, black]{\textbf{\texttimes}} node[midway, below left,scale=0.9]{$2$};
            \draw[Goldenrod] (D)--(C) node[near start, left,scale=0.9]{$4^{+}$} node[near end, left,scale=0.9]{$4^{-}$} ;
            \draw[Fuchsia] (B)--(D) node[near start, below,scale=0.9]{$5^{+}$} node[near end, below,scale=0.9]{$5^{-}$};
            \draw (-1.5,-3) node[scale=0.9]{$\mu^{+}_{H'}(\Gamma)$};
            \end{scope}
        \end{tikzpicture}
    \end{center}

\smallskip

\noindent where the orientation $\sigma_{H'}$ is  $(5^{-}\ 1^{+} \ 4^{+})(1^{-} \ 2 \ 3)=(1^{-}\ 3)(1^{+}\ 5^{+})\sigma (4^{+} \ 5^{-})(1^{-} \ 3)$, the multiplicity $m_{H'}:H\rightarrow\Z_{>0}$ is given by $m_{H'}(4^{-})=3$, $m_{H'}(1^{-})=m_{H'}(2)=m_{H'}(3)=2$ and $m_{H'}(5^{-})=m_{H'}(1^{+})=m_{H'}(4^{+})=m_{H'}(5^{+})=1$ and the $\Z/2\Z$-grading $d_{H'}:H\rightarrow\Z/2\Z$ is given by $d_{H'}(3)=-1$, $d_{H'}(1^{-})=1$ and $d_{H'}(4^{-})=d_{H'}(4^{+})=d_{H'}(1^{+})=d_{H'}(5^{-})=d_{H'}(5^{+})=d_{H'}(2)=0$.
\end{ex}

\medskip

Considering a $\Z/2\Z$-graded generalized Kauer move of a $\Z/2\Z$-graded skew Brauer graph $(\Gamma,d)$, one can construct the covering of $\mu^{+}_{H'}(\Gamma,d)$ as defined before Example \ref{ex:covering of a skew Brauer graph algebra}. The following proposition shows a commutativity between constructing this covering and applying a generalized Kauer move.

\medskip

\begin{prop}\label{prop:commutativity covering and generalized Kauer move in skew Brauer graphs}
    Let $(\Gamma,d)=(H,\iota,\sigma,d)$ be a $\Z/2\Z$-graded skew Brauer graph and $H'$ be a subset of $H$ stable under $\iota$. We denote by $\Gamma_{d}=(H_{d},\iota_{d},\sigma_{d},m_{d})$ the covering of $\Gamma$ constructed before Example \ref{ex:covering of a skew Brauer graph algebra} and $H'_{d}=H'\times\Z/2\Z\subset H_{d}$. Then, the covering of $\mu^{+}_{H'}(\Gamma,d)$ is the Brauer graph $\mu^{+}_{H'_{d}}(\Gamma_{d})$.
\end{prop}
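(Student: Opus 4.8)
The plan is to transcribe the proof of Proposition~\ref{prop:commutativity Galois covering and generalized Kauer move with multiplicity}, adding the case distinctions forced by degenerate edges. First I would note that $H'_d=H'\times\Z/2\Z$ is stable under $\iota_d$ (immediate from the two clauses in the definition of $\iota_d$, since $H_\circ$ and $H_\times$ are $\iota$-stable), and that $(h,r)\in\mathrm{Sect}(H',\sigma)$ is a maximal sector in $\Gamma$ if and only if $(h_i,r)\in\mathrm{Sect}(H'_d,\sigma_d)$ is a maximal sector in $\Gamma_d$ for every $i\in\Z/2\Z$ — this uses only that $p_\Gamma\circ\sigma_d=\sigma\circ p_\Gamma$ and $p_\Gamma^{-1}(H')=H'_d$. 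Invoking Proposition~\ref{prop:successive graded generalized Kauer moves in skew Brauer graph algebras} (applied both to $\Gamma$ and, in its ungraded form, to $\Gamma_d$), the statement reduces to the case of a single maximal sector $(h,r)$: it suffices to prove that the covering of $\mu^{+}_{(h,r)}(\Gamma,d)$ equals $\mu^{+}_{p_\Gamma^{-1}(h,r)}(\Gamma_d)$, where $p_\Gamma^{-1}(h,r)$ denotes the product of the commuting maximal sectors $(h_i,r)$, $i\in\Z/2\Z$.

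Both graphs have half-edge set $H\times\Z/2\Z$. The generalized Kauer move leaves $\iota$ (resp.\ $\iota_d$) and the partition $H=H_\circ\sqcup H_\times$ unchanged, so the pairing of $\mu^{+}_{(h,r)}(\Gamma,d)$'s covering and the pairing of $\mu^{+}_{p_\Gamma^{-1}(h,r)}(\Gamma_d)$ are both given by the same formula, namely $\iota_d$; likewise the multiplicities agree, since both $(m_d)_{p_\Gamma^{-1}(h,r)}$ and the covering of $m_{(h,r)}$ send every $\sigma^i h_j$ with $0\le i\le r$ to $m(\iota\sigma^{r+1}h)$ — using that $\iota\sigma^{r+1}h=\sigma^{r+1}h$, hence this value is $m(h)$, when $\sigma^{r+1}h\in H_\times$ — and are unchanged elsewhere. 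Thus the whole content is the equality of orientations
\[
(\sigma_d)_{p_\Gamma^{-1}(h,r)}=(\sigma_{(h,r)})_{d_{(h,r)}}.
\]
For every half-edge other than the $\sigma_d^{-1}h_i$, $\iota_d\sigma_d^{r+1}h_i$ and $\sigma_d^r h_i$ ($i\in\Z/2\Z$) both sides act identically, so only these three families need checking. I would compute them exactly as in the proof of Proposition~\ref{prop:commutativity Galois covering and generalized Kauer move with multiplicity}, tracking the $\Z/2\Z$-index on each side, but now splitting according to whether $\sigma^{r+1}h\in H_\circ$ or $\sigma^{r+1}h\in H_\times$, and according to the degenerate special case $\iota\sigma^{r+1}h=\sigma^{-1}h$ of Remark~\ref{rem:special case of generalized Kauer move of a sector for skew Brauer graph}; the verification for $\sigma_d^r h_i$ is representative, and the cases $\iota_d\sigma_d^{r+1}h_i$ and $\sigma_d^{-1}h_i$ are analogous.

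The main obstacle is the index bookkeeping when $\sigma^{r+1}h\in H_\times$. There $\iota_d$ shifts the $\Z/2\Z$-component on $\sigma_d^{r+1}h_i$, so the index arithmetic that in the multiplicity case yielded $\sum_{k}d(\sigma^k h)+d(\iota\sigma^{r+1}h)$ picks up an extra $+1$ in several spots, and one must verify that the $+1$'s built into the definition of $d_{(h,r)}$ on $\iota\sigma^{r+1}h$ and on $\sigma^r h$ are positioned exactly so that $(\sigma_d)_{p_\Gamma^{-1}(h,r)}$ and $(\sigma_{(h,r)})_{d_{(h,r)}}$ still coincide — and similarly that the clause $d_{(h,r)}(\sigma^{-1}h)=d_{(h,r)}(\iota\sigma^{r+1}h)$ is the correct one in the degenerate case. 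Everything else is a routine transcription of the multiplicity-one computation, using that $0$-homogeneity of $d_{(h,r)}$ has already been recorded after Definition~\ref{def:graded generalized Kauer moves of a sector for skew Brauer graph algebras}.
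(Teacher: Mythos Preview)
Your proposal is correct and follows essentially the same approach as the paper: reduce to a single maximal sector via Proposition~\ref{prop:successive graded generalized Kauer moves in skew Brauer graph algebras}, then verify that half-edges, pairings, multiplicities, and orientations of the two Brauer graphs coincide. The paper is in fact terser than you are about the orientation equality --- it simply records that ``using similar computations as in the proof of Proposition~\ref{prop:commutativity Galois covering and generalized Kauer move with multiplicity}, one can check that their orientations are equal'' --- so your discussion of the extra $+1$'s arising from $\iota_d$ when $\sigma^{r+1}h\in H_\times$ is a welcome elaboration rather than a deviation.
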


\medskip

The previous proposition can be summarized in the following commutative diagram 

\smallskip

\begin{center}
    \begin{tikzcd}[column sep=5cm, row sep=2cm]
        \Gamma_{d} \arrow{r}[above]{\mbox{generalized Kauer}}[below]{\mbox{ move of $H'_{d}$}} \arrow{d}[left]{p_{\Gamma}} &\mu^{+}_{H'_{d}}(\Gamma_{d}) \arrow{d}[right]{p_{\mu^{+}(\Gamma)}} \\
        (\Gamma,d) \arrow{r}[above]{\mbox{$\Z/2\Z$-graded generalized}}[below]{\mbox{Kauer move of $H'$}}&\mu^{+}_{H'}(\Gamma,d)
    \end{tikzcd}
\end{center}

\smallskip

\noindent where $p_{\Gamma}:\Gamma_{d}\rightarrow\Gamma$ and $p_{\mu^{+}_{H'}}(\Gamma):\mu^{+}_{H'_{d}}(\Gamma_{d})\rightarrow\mu^{+}_{H'}(\Gamma)$ are the morphisms of skew Brauer graphs induced by the natural projection and $\mu^{+}_{H'}(\Gamma)$ denotes the underlying skew Brauer graph of $\mu^{+}_{H'}(\Gamma,d)$.

\medskip

\begin{proof}
    One can easily check that $H'_{d}$ is a subset of $H_{d}$ stable under $\iota_{d}$ and that $(h,r)\in\mathrm{Sect}(H',\sigma)$ is a maximal sector in $\Gamma$ if and only if $(h_{i},r)\in\mathrm{Sect}(H'_{d},\sigma_{d})$ is a maximal sector in $\Gamma_{d}$ for all $i\in\Z/2\Z$. By Proposition \ref{prop:successive graded generalized Kauer moves in skew Brauer graph algebras}, it suffices to prove that the covering of $\mu^{+}_{(h,r)}(\Gamma,d)$ is given by

    \[\mu^{+}_{p_{\Gamma}^{-1}(h,r)}(\Gamma_{d})=(H_{d},\iota_{d},(\sigma_{d})_{p_{\Gamma}^{-1}(h,r)},(m_{d})_{p_{\Gamma}^{-1}(h,r)})\]

    \noindent for any maximal sector $(h,r)\in\mathrm{Sect}(H',\sigma)$, where $p_{\Gamma}^{-1}(h,r)$ is the product of the maximal sectors $(h_{i},r)$ for $i\in\Z/2\Z$. We denote by 

    \[\mu^{+}_{(h,r)}(\Gamma)_{d_{(h,r)}}=(H_{d_{(h,r)}},\iota_{d_{(h,r)}},(\sigma_{(h,r)})_{d_{(h,r)}}, (m_{(h,r)})_{d_{(h,r)}})\]

    \noindent the covering of $\mu^{+}_{(h,r)}(\Gamma,d)$. By definition, these two Brauer graphs have the same set of half-edges. By construction of the covering, it is clear that they have the same pairing since it only depends on $H_{\circ}$ and $H_{\times}$. Moreover, using similar computations as in the proof of Proposition \ref{prop:commutativity Galois covering and generalized Kauer move with multiplicity}, one can check that their orientations are equal. It remains to prove the equality of the multiplicities. It is clear that the equality holds for any half-edge different from the $\sigma_{d}^{i}h_{j}$. Moreover, we have

    \begin{equation*}
        \begin{aligned}
            &(m_{d})_{p^{-1}(h,r)}(\sigma_{d}^{i}h_{j})=m_{d}(\iota_{d}\sigma_{d}^{r+1}h_{j})=m(\iota\sigma^{r+1}h) \\
            &(m_{(h,r)})_{d(h,r)}(\sigma_{d}^{i}h_{j})=m_{(h,r)}(\sigma^{i}h)=m(\iota\sigma^{r+1}h)
        \end{aligned}
    \end{equation*}

    \noindent This concludes the proof of the proposition. 
\end{proof}

\medskip

\subsection{Compatibility with silting mutations}

\medskip

The goal of this part is to prove the following theorem which is an analogous version of Theorem 3.10 in \cite{Soto} for skew Brauer graph algebras.

\medskip

\begin{thm} \label{thm:generalized Kauer moves for skew Brauer graphs}
    Let $\Gamma=(H,\iota,\sigma,m)$ be a skew Brauer graph and $H'$ be a subset of $H$ stable under $\iota$. We assume that 2 and $\overline{m}$, the least common multiple of the $m(h)$ for $h\in H$, are invertible the field $k$. Denoting by $B$ and $B'$ the skew Brauer graph algebras associated respectively to $\Gamma$ and $\mu^{+}_{H'}(\Gamma)$, there is an equivalence of triangulated categories

    \smallskip

    \begin{center}
        \begin{tikzcd}[column sep=3.5cm]
            \per{B'} \arrow[cramped]{r}[above]{-\lotimes{B'}{\ \mu^{+}(B\, ;\, e_{H''}B)}} &\per{B}
        \end{tikzcd}
    \end{center}

    \smallskip

    \noindent where $e_{H''}B$ is the projective $B$-module corresponding to the edges in $(H\backslash H')/\iota$.
\end{thm}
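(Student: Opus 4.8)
The plan is to transcribe, step by step, the proof of Theorem~\ref{thm:generalized Kauer moves with multiplicity}, replacing the $\Z/\overline{m}\,\Z$-Galois covering by the $\Z/2\Z$-covering of Section~3.2 and taking $G=(\mathbb{C}_{2},.)$. First I would equip $\Gamma$ with a carefully chosen $0$-homogeneous $\Z/2\Z$-grading $d:H\rightarrow\Z/2\Z$: around each $\circ$-vertex meeting both $H'$ and $H\backslash H'$ I pick a maximal sector $(h,r)\in\mathrm{Sect}(H',\sigma)$, place the (possibly) non-trivial degree on $\sigma^{-1}h$, and choose the remaining values at that vertex so that the degrees sum to $0$ and are compatible with the index shifts produced at degenerate half-edges (compare the case distinctions in Definition~\ref{def:graded generalized Kauer moves of a sector for skew Brauer graph algebras}); around every other $\circ$-vertex I set $d$ identically $0$. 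Let $\Gamma_{d}=(H_{d},\iota_{d},\sigma_{d},m_{d})$ be the covering, which is an honest Brauer graph with $\overline{m_{d}}=\overline{m}$, set $H_{d}'=H'\times\Z/2\Z$, and let $B_{d}$, $B_{d}'$ be the Brauer graph algebras of $\Gamma_{d}$ and $\mu^{+}_{H_{d}'}(\Gamma_{d})$; by Proposition~\ref{prop:commutativity covering and generalized Kauer move in skew Brauer graphs}, $\mu^{+}_{H_{d}'}(\Gamma_{d})$ is the covering of $\mu^{+}_{H'}(\Gamma,d)$.

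Next I would prove the skew-group analogue of Proposition~\ref{prop:generalized Kauer move and skew group algebra}. Since $\overline{m}$ is invertible, Theorem~\ref{thm:generalized Kauer moves with multiplicity} applies to the Brauer graph $\Gamma_{d}$ and yields a triangle equivalence $-\lotimes{B_{d}'}{T_{d}}\colon\per{B_{d}'}\rightarrow\per{B_{d}}$ with $T_{d}:=\mu^{+}(B_{d}\,;\,e_{H_{d}''}B_{d})$. One then checks that $T_{d}$ is $G$-invariant: $e_{H_{d}''}B_{d}$ is $G$-invariant because $G$ acts bijectively on $(H_{d}\backslash H_{d}')/\iota_{d}$ (Example~\ref{ex:G-invariant object} \ref{item:G-invariant object 1}), the left minimal approximations $\alpha_{[h_{i}]}$ occur in $G$-orbits, and each cone $\mathrm{Cone}(\oplus_{i\in\Z/2\Z}\alpha_{[h_{i}]})$ is $G$-invariant by Example~\ref{ex:G-invariant object} \ref{item:G-invariant object 2} — this is verbatim the computation in the proof of Proposition~\ref{prop:generalized Kauer move and skew group algebra}, since $\Gamma_{d}$ has no degenerate edges. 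By Theorem~2.10 of \cite{AB} we obtain an equivalence $\per{B_{d}'\, G}\rightarrow\per{B_{d}\, G}$ realized by $-\lotimes{B_{d}'\, G}{\, T_{d}\lotimes{B_{d}}{B_{d}\, G}}$, and Proposition~\ref{prop:silting mutation in skew group algebras}, whose hypotheses ($G$-invariance of $e_{H_{d}''}B_{d}$, of $e_{H_{d}'}B_{d}$, and of the targets of the approximations) hold by the same reasoning, identifies $T_{d}\lotimes{B_{d}}{B_{d}\, G}\simeq\mu^{+}(B_{d}\, G\,;\,e_{H_{d}''}B_{d}\, G)=:T_{d}\, G$.

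I would then assemble the commutative square

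\begin{center}
\begin{tikzcd}[column sep=3cm, row sep=2cm]
\per{B_{d}'\, G} \arrow{r}[above]{-\lotimes{B_{d}'\, G}{\, T_{d}\, G}} \arrow{d} &\per{B_{d}\, G} \arrow{d} \\
\per{B'} \arrow[dashed]{r} &\per{B}
\end{tikzcd}
\end{center}

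\noindent whose vertical equivalences come from Proposition~\ref{prop:covering and skew group algebra} applied to $\Gamma$ and to $\mu^{+}_{H'}(\Gamma)$, using that the idempotent $f=\sum_{[h]\in H/\iota}e_{[h_{0}]}\otimes 1_{G}$ is full (conjugation by $1\otimes g$ sends $e_{[h_{0}]}\otimes 1_{G}$ to $e_{[h_{1}]}\otimes 1_{G}$). This produces a derived equivalence $\per{B'}\simeq\per{B}$ with tilting object $T=(fB_{d}'\, G\lotimes{B_{d}'\, G}{\, T_{d}\, G})\lotimes{B_{d}\, G}{B_{d}\, Gf}$. Finally, using $T_{d}\, G\simeq T_{d}\lotimes{B_{d}}{B_{d}\, G}$, one writes $fB_{d}'\, G\lotimes{B_{d}'\, G}{\, T_{d}\, G}\simeq f_{H_{d}''}B_{d}\, G\oplus\bigoplus_{[h_{0}]\in H_{d}'/\iota_{d}}\mathrm{Cone}(\alpha_{[h_{0}]}\lotimes{B_{d}}{B_{d}\, G})$, and the choice of $d$ in the first step is precisely what forces each $\alpha_{[h_{0}]}\lotimes{B_{d}}{B_{d}\, G}$ to be the left minimal $\mathrm{add}(f_{H_{d}''}B_{d}\, G)$-approximation of $e_{[h_{0}]}B_{d}\, G$; hence $fB_{d}'\, G\lotimes{B_{d}'\, G}{\, T_{d}\, G}\simeq\mu^{+}(fB_{d}\, G\,;\,f_{H_{d}''}B_{d}\, G)$, and applying the equivalence $-\lotimes{B_{d}\, G}{B_{d}\, Gf}$ together with Proposition~\ref{prop:covering and skew group algebra} gives $T\simeq\mu^{+}(fB_{d}\, Gf\,;\,f_{H_{d}''}B_{d}\, Gf)\simeq\mu^{+}(B\,;\, e_{H''}B)$.

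The main obstacle is the index bookkeeping in the first and last steps. Because a generalized Kauer move of a sector whose boundary is a degenerate edge shifts the $\Z/2\Z$-degrees by an extra $\pm1$ (Definition~\ref{def:graded generalized Kauer moves of a sector for skew Brauer graph algebras}), and because a degenerate edge of $\Gamma$ lifts to an edge of $\Gamma_{d}$ joining the two sheets of the covering, one must verify that the $0$-homogeneous grading $d$ can be arranged so that the transported approximation $\alpha_{[h_{0}]}\lotimes{B_{d}}{B_{d}\, G}$ is genuinely left minimal with target supported on the $0$-indexed edges, and that, after applying $f(-)$, it recovers the combined left minimal $\mathrm{add}(e_{H''}B)$-approximation of the (possibly decomposable) projective $B$-module attached to $[h_{0}]$. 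Everything else transcribes directly from Section~2, since $\Gamma_{d}$ is an ordinary Brauer graph and the skew relations \ref{item:skew Brauer graph algebra 4} and \ref{item:skew Brauer graph algebra 5} of $B$ arise automatically from the skew-group presentation of Proposition~\ref{prop:covering and skew group algebra}.
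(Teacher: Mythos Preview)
Your overall architecture is the same as the paper's: $\Z/2\Z$-covering, then the skew-group analogue of Proposition~\ref{prop:generalized Kauer move and skew group algebra}, then the commutative square and the identification $T\simeq\mu^{+}(B;e_{H''}B)$. Two points deserve comment.

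First, the paper does not bother with a ``carefully chosen'' grading: it simply takes $d\equiv 0$. Your anticipated obstacle---that for a degenerate $h\in H_\times$ the approximation $\alpha_{[h_0]}\lotimes{B_d}{B_d\,G}$ may hit a projective indexed by $1$ rather than $0$---is handled not by rigging $d$ but by observing that inside $B_d\,G$ one has $e_{[(\sigma^{r+1}h)_1]}B_d\,G\simeq e_{[(\sigma^{r+1}h)_0]}B_d\,G$ (via left multiplication by $1\otimes g$ when $\sigma^{r+1}h\in H_\circ$, and literally by the identity when $\sigma^{r+1}h\in H_\times$ since then $[(\sigma^{r+1}h)_0]=[(\sigma^{r+1}h)_1]$). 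This lets you rewrite each $\alpha_{[h_0]}\lotimes{B_d}{B_d\,G}$ as a map into a sum of $0$-indexed projectives without any hypothesis on $d$, so the ``index bookkeeping'' evaporates.

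Second, in your $G$-invariance check you group the approximations as $\bigoplus_{i\in\Z/2\Z}\alpha_{[h_i]}$ for each $[h]\in H'/\iota$. This is only correct for $[h]\in H'_\circ/\iota$: when $h\in H'_\times$ one has $\iota_d(h_0)=h_1$, hence $[h_0]=[h_1]$ in $H'_d/\iota_d$ and there is a \emph{single} approximation $\alpha_{[h_0]}$, not two. The paper splits accordingly, setting $\beta_{[h]}=\bigoplus_{i\in\Z/2\Z}\alpha_{[h_i]}$ for $[h]\in H'_\circ/\iota$ and $\beta_{[h]}=\alpha_{[h_0]}$ for $[h]\in H'_\times/\iota$, and then checks $G$-invariance of $\mathrm{Cone}(\beta_{[h]})$ in each case. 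Once you make this correction and take $d\equiv 0$, your argument coincides with the paper's.
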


\medskip

The idea of the proof is similar to the proof of Theorem \ref{thm:generalized Kauer moves with multiplicity}. The following result is an analogous version of Proposition \ref{prop:generalized Kauer move and skew group algebra} for skew Brauer graph algebras.

\medskip

\begin{prop} \label{prop:generalized Kauer move and skew group algebra for skew Brauer graph}
    Let us consider the setting of Proposition \ref{prop:commutativity covering and generalized Kauer move in skew Brauer graphs}. We assume that 2 and $\overline{m}$, the least common multiple of the $m(h)$ for $h\in H$, are invertible in the field $k$. We denote by $B_{d}$ and $B_{d}'$ the Brauer graph algebras associated to $\Gamma_{d}$ and $\mu^{+}_{H'_{d}}(\Gamma_{d})$ respectively. Let $G$ be the cyclic group $(\mathbb{C}_{2},.)$ of order 2. Then, there is an equivalence of triangulated categories

    \smallskip

    \begin{center}
        \begin{tikzcd}[column sep=4.5cm]
            \per{B_{d}'\, G} \arrow[cramped]{r}[above]{-\lotimes{B_{d}'\, G}{\ \mu^{+}(B_{d}\, G\, ;\, e_{H''_{d}}B_{d}\, G)}} &\per{B_{d}\, G}
        \end{tikzcd}
    \end{center}

    \smallskip

    \noindent where $e_{H''_{d}}B_{d}$ denotes the projective $B_{d}$-module corresponding to the edges in $(H_{d}\backslash H'_{d})/\iota_{d}$.
    
\end{prop}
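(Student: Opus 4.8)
The plan is to imitate, almost verbatim, the proof of Proposition \ref{prop:generalized Kauer move and skew group algebra}, replacing the cyclic group $\mathbb{C}_{\overline{m}}$ by $G=\mathbb{C}_{2}$ and invoking Theorem \ref{thm:generalized Kauer moves with multiplicity} in place of Theorem 3.10 of \cite{Soto}. This substitution is legitimate because the covering $\Gamma_{d}$ is an honest Brauer graph (with multiplicity $m_{d}(h_{i})=m(h)$) whose least common multiple $\overline{m_{d}}$ equals $\overline{m}$, which is invertible in $k$ by hypothesis; so Theorem \ref{thm:generalized Kauer moves with multiplicity} applied to $(\Gamma_{d},H'_{d})$ produces a triangle equivalence $\per{B'_{d}}\xrightarrow{\ -\lotimes{B'_{d}}{T_{d}}\ }\per{B_{d}}$ with $T_{d}:=\mu^{+}(B_{d}\,;\,e_{H''_{d}}B_{d})$. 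The second main ingredient is that $2$ is invertible in $k$, which is what allows us to form $B_{d}\,G$ and use the skew group algebra machinery of Section 1.

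First I would show that $T_{d}$ is $G$-invariant in $\Db{B_{d}}$. Writing $T_{d}=e_{H''_{d}}B_{d}\,\oplus\,\bigoplus_{[h_{i}]\in H'_{d}/\iota_{d}}\mathrm{Cone}(\alpha_{[h_{i}]})$ as in the proof of Proposition \ref{prop:generalized Kauer move and skew group algebra}, where $\alpha_{[h_{i}]}$ is the left minimal $\mathrm{add}(e_{H''_{d}}B_{d})$-approximation of $e_{[h_{i}]}B_{d}$, one notes that $g$ commutes with $\sigma_{d}$ (immediate from $\sigma_{d}(h_{i})=(\sigma h)_{i+d(h)}$) and with $\iota_{d}$, and that $H'_{d}=H'\times\mathbb{Z}/2\mathbb{Z}$ is $G$-stable. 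Hence $G$ permutes $(H_{d}\backslash H'_{d})/\iota_{d}$, so $e_{H''_{d}}$ is $G$-stable and $e_{H''_{d}}B_{d}$ is $G$-invariant by Example \ref{ex:G-invariant object} \ref{item:G-invariant object 1}. The key combinatorial point is that the fibres of the projection $H'_{d}/\iota_{d}\rightarrow H'/\iota$ coincide exactly with the $G$-orbits: over $[h]$ with $h\in H_{\circ}$ the fibre is the free orbit $\{[h_{0}],[h_{1}]\}$, while over $[h]$ with $h\in H_{\times}$ one has $[h_{0}]=[h_{1}]$, a $G$-fixed point. Grouping the cones along these fibres, set $X_{[h]}$ to be the sum of the $e_{[h_{i}]}B_{d}$ and $Y_{[h]}$ the sum of the targets of the $\alpha_{[h_{i}]}$, the indices running over the fibre over $[h]$; since $r(h_{i})$ depends only on the $G$-orbit of $[h_{i}]$, these objects and the morphism $\bigoplus\alpha_{[h_{i}]}\colon X_{[h]}\rightarrow Y_{[h]}$ satisfy the equivariance conditions of Example \ref{ex:G-invariant object} \ref{item:G-invariant object 1}–\ref{item:G-invariant object 2}, so each $\mathrm{Cone}(\bigoplus\alpha_{[h_{i}]})$ is $G$-invariant, and therefore $T_{d}$ is $G$-invariant.

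Next, by Theorem 2.10 of \cite{AB} the $G$-invariance of $T_{d}$ lets us lift the above equivalence to $\per{B'_{d}\,G}\xrightarrow{\ -\lotimes{B'_{d}\,G}{(T_{d}\lotimes{B_{d}}{B_{d}\,G})}\ }\per{B_{d}\,G}$. It then remains to identify $T_{d}\lotimes{B_{d}}{B_{d}\,G}$ with $\mu^{+}(B_{d}\,G\,;\,e_{H''_{d}}B_{d}\,G)$, and for this I would apply Proposition \ref{prop:silting mutation in skew group algebras} with $M=B_{d}$ and $M_{0}=e_{H''_{d}}B_{d}$: here $M/M_{0}=e_{H'_{d}}B_{d}=\bigoplus_{[h]}X_{[h]}$, its left minimal $\mathrm{add}(M_{0})$-approximation is $\bigoplus_{[h]}(\bigoplus\alpha_{[h_{i}]})\colon\bigoplus_{[h]}X_{[h]}\rightarrow\bigoplus_{[h]}Y_{[h]}$, and all three of $M_{0}$, $M/M_{0}$ and $M_{0}'=\bigoplus_{[h]}Y_{[h]}$ were shown $G$-invariant above, so Proposition \ref{prop:silting mutation in skew group algebras} gives the desired isomorphism in $\per{B_{d}\,G}$ and completes the argument.

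The only genuinely new wrinkle compared with Section 2 is the bookkeeping around the degenerate edges: in $\Gamma_{d}$ a half-edge $h\in H_{\times}$ yields a single $G$-fixed edge rather than a pair, so one must check that the individual approximation morphism $\alpha_{[h_{0}]}$ (with $h\in H_{\times}$) satisfies the equivariance square of Example \ref{ex:G-invariant object} \ref{item:G-invariant object 2} on its own. As indicated above this reduces to the commutation of $g$ with $\sigma_{d}$ and $\iota_{d}$, so I expect no real difficulty here; the substantive input is simply that Theorem \ref{thm:generalized Kauer moves with multiplicity}, already proved for arbitrary multiplicity, applies to the covering $\Gamma_{d}$.
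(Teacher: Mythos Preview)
Your proposal is correct and follows essentially the same route as the paper: you invoke Theorem \ref{thm:generalized Kauer moves with multiplicity} for the Brauer graph $\Gamma_{d}$, then verify $G$-invariance of $T_{d}$ by regrouping the cones $\mathrm{Cone}(\alpha_{[h_{i}]})$ along the fibres of $H'_{d}/\iota_{d}\rightarrow H'/\iota$ (distinguishing the free orbits over $H'_{\circ}$ from the fixed points over $H'_{\times}$), and conclude via Theorem 2.10 of \cite{AB} together with Proposition \ref{prop:silting mutation in skew group algebras}. The paper does exactly this, introducing the notation $\beta_{[h]}$ for the morphism $\bigoplus_{i}\alpha_{[h_{i}]}$ over each fibre and making the case split $[h]\in H'_{\circ}/\iota$ versus $[h]\in H'_{\times}/\iota$ explicit.
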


\medskip

\begin{proof}
    By Theorem \ref{thm:generalized Kauer moves with multiplicity}, there is an equivalence of triangulated categories

    \smallskip

    \begin{center}
        \begin{tikzcd}[column sep=2cm]
            \per{B'_{d}} \arrow[cramped]{r}[above]{-\lotimes{B'_{d}}{\ T_{d}}} &\per{B_{d}}
        \end{tikzcd}
    \end{center}

    \smallskip

    \noindent where $T_{d}:=\mu^{+}(B_{d}\, ;\, e_{H''_{d}}B_{d})$. Similarly to the proof of Proposition \ref{prop:generalized Kauer move and skew group algebra}, it suffices to prove that $T_{d}$ is $G$-invariant and that the assumptions of Proposition \ref{prop:silting mutation in skew group algebras} hold. Let us first check that $T_{d}$ is indeed $G$-invariant. Considering the same notations as in the proof of Proposition \ref{prop:generalized Kauer move and skew group algebra}, the left minimal $\mathrm{add}(e_{H''_{d}}B_{d})$-approximation of $e_{H'_{d}}B_{d}$ is given by (cf Figure \ref{fig:left minimal approximation})

    \smallskip

    \begin{center}
       $\alpha_{[h_{i}]}$ : \begin{tikzcd}[column sep=3cm, ampersand replacement=\&] e_{[h_{i}]}B_{d} \arrow[cramped]{r}[above]{\begin{pmatrix} \alpha(h_{i},H'_{d}) \\ \alpha(\iota_{d}h_{i},H'_{d})           
       \end{pmatrix}} \&e_{[\sigma_{d}^{r(h_{i})+1}h_{i}]}B_{d} \,\oplus\, e_{[\sigma_{d}^{r(\iota_{d}h_{i})+1}\iota_{d}h_{i}]}B_{d} 
        \end{tikzcd}
    \end{center}
    
    \smallskip

    \noindent Moreover, by definition of the left mutation, we have

    \smallskip

    \begin{equation*}
            \mu^{+}(B_{d}\, ;\, e_{H''_{d}}B_{d})
            =e_{H''_{d}}B_{d} \oplus \bigoplus_{[h_{i}]\in H'_{d}/\iota_{d}} \mathrm{Cone}(\alpha_{[h_{i}]}) 
            =e_{H''_{d}}B_{d} \oplus \bigoplus_{[h]\in H'/\iota} \mathrm{Cone}(\beta_{[h]})
    \end{equation*}

    \smallskip

    \noindent where $\beta_{[h]}:X_{[h]}\rightarrow Y_{[h]}$ is defined as follows
    
    \smallskip

    \begin{equation*}
        \begin{gathered}
            \beta_{[h]}=\left\{\begin{aligned}
            &\bigoplus_{i\in\Z/2\Z} \alpha_{[h_{i}]} &&\mbox{if $[h]\in H'_{\circ}/\iota$} \\
            &\alpha_{[h_{0}]} &&\mbox{if $[h]\in H'_{\times}/\iota$}
            \end{aligned}\right. \qquad \mbox{with} \\[0.5cm]
            X_{[h]}=\left\{\begin{aligned}& \bigoplus_{i\in\Z/2\Z} e_{[h_{i}]}B_{d} &&\mbox{if $[h]\in H'_{\circ}/\iota$} \\
            &e_{[h_{0}]}B_{d} &&\mbox{if $[h]\in H'_{\times}/\iota$}
            \end{aligned}\right. \qquad \mbox{and} \\[0.5cm]
            Y_{[h]}=\left\{\begin{aligned}&\bigoplus_{i\in\Z/2\Z} (e_{[\sigma_{d}^{r(h_{i})+1}h_{i}]}B_{d} \oplus e_{[\sigma_{d}^{r(\iota_{d}h_{i})+1}\iota_{d}h_{i}]}B_{d}) &&\mbox{if $[h]\in H'_{\circ}/\iota$} \\
            &e_{[\sigma_{d}^{r(h_{0})+1}h_{0}]}B_{d} \oplus e_{[\sigma_{d}^{r(h_{1})+1}h_{1}]}B_{d} &&\mbox{if $[h]\in H'_{\times}/\iota$}
            \end{aligned}\right.
            \end{gathered}
    \end{equation*}

    \smallskip

    \noindent where $H'_{\circ}=H'\cap H_{\circ}$ and $H'_{\times}=H'\cap H_{\times}$. Since $G$ acts bijectively on the set of edges $(H_{d}\backslash H'_{d})/\iota_{d}$, the object $e_{H''_{d}}B_{d}$ is $G$-invariant by Example \ref{ex:G-invariant object} \ref{item:G-invariant object 1}. Moreover, one can easily check that $X_{[h]}$ and $Y_{[h]}$ are $G$-invariant objects for all $[h]\in H'/\iota$ thanks to Example \ref{ex:G-invariant object} \ref{item:G-invariant object 1}. In this case, the isomorphisms $\iota_{g}^{X_{[h]}}$ and $\iota_{g}^{Y_{[h]}}$ (cf Definition \ref{def:G-invariant object}) are respectively given by the action of $g$ on $X_{[h]}$ and $Y_{[h]}$. Furthermore, using the definition of the $\alpha_{[h_{i}]}$, it is not hard to check that the following diagram commutes for all $g\in G$.

    \smallskip

    \begin{center}
        \begin{tikzcd}[column sep=2cm, row sep=2cm]
            X_{[h]}^{g^{-1}} \arrow[cramped]{d}[left]{\iota_{g}^{X_{[h]}}} \arrow[cramped]{r}[above]{\beta_{[h]}^{g^{-1}}} &Y_{[h]}^{g^{-1}} \arrow[cramped]{d}[right]{\iota_{g}^{Y_{[h]}}} \\
            X_{[h]} \arrow[cramped]{r}[below]{\beta_{[h]}} &Y_{[h]}
        \end{tikzcd}
    \end{center}

    \smallskip

    \noindent By Example \ref{ex:G-invariant object} \ref{item:G-invariant object 2}, we deduce that $\mathrm{Cone}(\beta_{[h]})$ is $G$-invariant for all $[h]\in H'/\iota$. Hence, we conclude that $\mu^{+}(B_{d}\, ;\, e_{H''_{d}}B_{d})$ is indeed $G$-invariant. With similar arguments than in the proof of Proposition \ref{prop:generalized Kauer move and skew group algebra}, one can check that the assumptions of Proposition \ref{prop:silting mutation in skew group algebras} holds. This concludes the proof. 
    \end{proof}

    \medskip

    \begin{proof}[Proof of Theorem \ref{thm:generalized Kauer moves for skew Brauer graphs}]
        Let us consider the 0-homogeneous $\Z/2\Z$-grading $d:H\rightarrow\Z/2\Z$ defined by $d(h)=0$ for all $h\in H$. Let $\Gamma_{d}=(H_{d},\iota_{d},\sigma_{d},m_{d})$ be the covering of $\Gamma$ constructed after Definition \ref{def:graded skew Brauer graph} and $H'_{d}=H'\times\Z/2\Z\subset H_{d}$. We denote by $B_{d}$ and $B_{d}'$ the Brauer graph algebras associated to $\Gamma_{d}$ and $\mu^{+}_{H'_{d}}(\Gamma_{d})$ respectively. Moreover, $G$ denotes the cyclic group $(\mathbb{C}_{2},.)$ of order 2. We have the following commutative diagram

        \smallskip

        \begin{center}
            \begin{tikzcd}[column sep=3cm, row sep=2cm]
                \per{B_{d}'\, G} \arrow[cramped]{d}[left]{-\lotimes{B'_{d}\, G}{\ B'_{d}\, Gf}}\arrow[cramped]{r}[above]{-\lotimes{B_{d}'\, G}{\ T_{d}\, G}} &\per{B_{d}\, G} \arrow[cramped]{d}[right]{-\lotimes{B_{d}\, G}{\ B_{d}\, Gf}} \\
                \per{B'} \arrow[cramped, dashed]{r}[below]{-\lotimes{B'}{\ T}} &\per{B}
            \end{tikzcd}
        \end{center}

        \smallskip

        \noindent where $f$ is the idempotent given by the sum of the $e_{[h_{0}]}\otimes 1_{G}$ for all $[h]\in H/\iota$ and $T_{d}\, G=\mu^{+}(B_{d}\, G\,;\, e_{H''_{d}}B_{d}\, G)$. The vertical arrows arise from Proposition \ref{prop:covering and skew group algebra} since $\mu^{+}_{H'_{d}}(\Gamma_{d})$ is the covering of $\mu^{+}_{H'}(\Gamma,d)$ by Proposition \ref{prop:commutativity covering and generalized Kauer move in skew Brauer graphs}. The top arrow is given by Proposition \ref{prop:generalized Kauer move and skew group algebra for skew Brauer graph}. This leads to a derived equivalence between $B'$ and $B$ and the associated tilting object $T$ is given by

        \[T=(fB_{d}'\, G\lotimes{B'_{d}\, G}{\ T_{d}\, G})\lotimes{B_{d}\, G}{\ B_{d}\, G f}\]

        \noindent It remains to prove that $T\simeq \mu^{+}(B\,;\,e_{H''}B)$. Similarly to the proof of Theorem \ref{thm:generalized Kauer moves with multiplicity}, it suffices to show that $\alpha_{[h_{0}]}\lotimes{B_{d}}{B_{d}\, G}$ is isomorphic to the left minimal $\mathrm{add}(f_{H''_{d}}B_{d}\, G)$-approximation of $e_{[h_{0}]}B_{d}\, G$ where $f_{H''_{d}}$ denotes the idempotent in $B_{d}\, G$ given by the sum of the $e_{[h_{0}]}\otimes 1_{G}$ for all $[h]\in (H\backslash H')/\iota$. Thanks to Proposition \ref{prop:covering and skew group algebra}, we know that 
        
        \[\alpha_{[h_{0}]}\lotimes{B_{d}}{B_{d}\, G}:e_{[h_{0}]}B_{d}\,G\longrightarrow e_{[\sigma_{d}^{r(h_{0})+1}h_{0}]}B_{d}\, G \oplus e_{[\sigma_{d}^{r(\iota_{d}h_{0})+1}\iota_{d}h_{0}]}B_{d}\, G\]
        
        \noindent is the left minimal $\mathrm{add}(e_{H''_{d}}B_{d}\, G)$-approximation of $e_{[h_{0}]}B_{d}\, G$. By definition of $d$, we have  

        \begin{equation*}
            \sigma_{d}^{r(\iota_{d}h_{0})+1}\iota_{d}h_{0}=\left\{\begin{aligned}&(\sigma^{r(\iota_{d}h_{0})+1}\iota h)_{0} &&\mbox{if $h\in H_{\circ}$} \\
            &(\sigma^{r(\iota_{d}h_{0})+1} h)_{1} &&\mbox{if $h\in H_{\times}$}
            \end{aligned}\right.
        \end{equation*}

        \noindent Moreover, we have the following isomorphism in $\per{B_{d}\, G}$

        \[e_{[(\sigma^{r(\iota_{d}h_{0})+1}h)_{1}]}B_{d}\, G\overset{\sim}{\longrightarrow} e_{[(\sigma^{r(\iota_{d}h_{0})+1}h)_{0}]}B_{d}\, G\]

        \noindent Indeed, if $\sigma^{r(\iota_{d}h_{0})+1}h\in H_{\times}$, then $[(\sigma^{r(\iota_{d}h_{0})+1}h)_{0}]=[(\sigma^{r(\iota_{d}h_{0})+1}h)_{1}]$ and the isomorphism is just the identity. On the other hand, if $\sigma^{r(\iota_{d}h_{0})+1}h\in H_{\circ}$, the isomorphism is given by the multiplication on the left by $(1\otimes g)$ where $g$ denotes the generator of $G$. Hence, in each case $\alpha_{[h_{0}]}\lotimes{B_{d}}{B_{d}\, G}$ can be written as

        \[\alpha_{[h_{0}]}\lotimes{B_{d}}{B_{d}\, G}:e_{[h_{0}]}B_{d}\,G\longrightarrow e_{[(\sigma^{r(h_{0})+1}h)_{0}]}B_{d}\, G \oplus e_{[(\sigma^{r(\iota_{d}h_{0})+1}\iota h)_{0}]}B_{d}\, G\]

        \noindent Thus, $\alpha_{[h_{0}]}\lotimes{B_{d}}{B_{d}\, G}$ is isomorphic to the left minimal $\mathrm{add}(f_{H''_{d}}B_{d}\, G)$-approximation of $e_{[h_{0}]}B_{d}\, G$. This concludes the proof. 
    \end{proof}

\phantomsection

\setlength{\bibitemsep}{0pt}
\setlength{\biblabelsep}{7pt}
\defbibnote{nom}{\\ \small{\scshape{Université Grenoble Alpes, CNRS, Institut Fourier, 38610 Gières}} \\ \textit{E-mail address :} \href{mailto:valentine.soto@univ-grenoble-alpes.fr}{valentine.soto@univ-grenoble-alpes.fr}}
\printbibliography[postnote=nom, heading=bibintoc,title={References}]

\end{document}